\def\darrow{\mathrel{\ThisStyle{\ooalign{$\SavedStyle\rightarrow$\cr%
				\hfil\textcolor{white}{\rule{2\LMpt}{1\LMex}}\kern2\LMpt\hfil}}}}
\newtheorem{theorem}{Theorem}
\newtheorem{proposition}[theorem]{Proposition}
\newtheorem{lemma}[theorem]{Lemma}
\newtheorem{corollary}[theorem]{Corollary}
\theoremstyle{definition}
\newtheorem{example}[theorem]{Example}
\newtheorem{prop}[theorem]{Proposition}
\newtheorem{Definition}[theorem]{Definition}
\newenvironment{definition}
{\begin{Definition}\rm}{\end{Definition}}
\theoremstyle{remark}
\newtheorem{remark}[theorem]{Remark}
\newcommand{\sign}{\operatorname{sign}}
\newcommand{\id}{\operatorname{id}}
\newcommand{\defeq}{\vcentcolon=}
\newcommand\nc{\newcommand}
\nc{\on}{\operatorname}
\nc\renc{\renewcommand}
\nc{\BR}{\mathbb R}
\nc{\BC}{\mathbb C}
\nc{\BQ}{\mathbb Q}
\nc{\BF}{\mathbb F}
\nc{\BZ}{\mathbb Z}
\nc{\BN}{\mathbb N}
\nc{\BS}{\mathbb S}
\nc{\BA}{\mathbb A}
\nc{\BP}{\mathbb P}
\nc{\Hom}{\on{Hom}}
\nc{\wt}{\widetilde}
\nc{\vspan}{\on{span}}
\nc{\ord}{\on{ord}}
\nc{\im}{\on{im}}
\nc{\Mat}{\on{Mat}}
\nc{\can}{\on{can}}
\nc{\coker}{\on{coker}}
\nc{\ev}{\on{ev}}
\nc{\Tr}{\on{Tr}}
\nc{\End}{\on{End}}
\nc{\Aut}{\on{Aut}}
\nc{\swap}{\on{swap}}
\nc{\Set}{\on{Set}}
\nc{\bC}{{\mathbf C}}
\nc{\bc}{{\mathbf c}}
\nc{\bD}{{\mathbf D}}
\nc{\bd}{{\mathbf d}}
\nc{\bE}{{\mathbf E}}
\nc{\be}{{\mathbf e}}
\nc{\bF}{{\mathbf F}}
\nc{\bff}{{\mathbf f}}
\nc{\fa}{\mathfrak a}
\nc{\adj}{\on{adj}}
\nc{\tensor}[3]{#1 \underset{#2}\otimes #3}
\nc{\Nat}{\on{Nat}}
\nc{\op}{\on{op}}
\nc{\Funct}{\on{Funct}}
\nc{\Ob}{\on{Ob}}
\nc{\fR}{\mathfrak{R}}
\nc{\Vect}{\on{Vect}}
\nc{\ns}{\on{non-spec}}
\nc{\GL}{\on{GL}}
\nc{\ol}{\overline}
\nc{\ul}{\underline}
\nc{\univ}{\on{univ}}
\nc{\Maps}{\on{Maps}}
\nc{\bdd}{\on{bdd}}
\nc{\cont}{\on{cont}}
\nc{\Sym}{\on{Sym}}
\nc{\Ind}{\on{Ind}}
\nc{\Res}{\on{Res}}
\nc{\Ann}{\on{Ann}}
\nc{\cI}{\mathcal{I}}
\nc{\pt}{\on{pt}}
\nc{\Bl}{\on{\Bl}}
\nc{\Spec}{\on{Spec}}
\nc{\Cl}{\on{Cl}}
\nc{\cD}{\mathcal{D}}
\renc{\div}{\on{div}}
\nc{\mc}{\mathcal}
\nc{\pp}{\mathfrak{p}}
\nc{\scr}{\mathscr}
\nc{\purple}{\textcolor[rgb]{0,0,0}}
\nc{\Disc}{\on{Disc}}
\nc{\wh}{\widehat}
\newcommand{\inv}{^{-1}}
\newcommand{\bmat}[1]{\begin{bmatrix}#1\end{bmatrix}}
\newcommand{\vphi}{\varphi}
\newcommand{\disc}{\mathrm{disc}}
\newcommand{\red}{\mathrm{red}}
\newcommand{\St}{\mathrm{St}}
\newcommand{\proj}{\mathrm{proj}}
\newcommand{\rank}{\mathrm{rank}}
\newcommand{\A}{\mathbb{A}}
\newcommand{\C}{\mathbb{C}}
\newcommand{\Q}{\mathbb{Q}}
\newcommand{\R}{\mathbb{R}}
\newcommand{\Z}{\mathbb{Z}}
\newcommand{\calh}{\mathcal{H}}
\newcommand{\cali}{\mathcal{I}}
\newcommand{\call}{\mathcal{L}}
\newcommand{\calp}{\mathcal{P}}
\newcommand{\cals}{\mathcal{S}}
\newcommand{\calv}{\mathcal{V}}
\newcommand{\fb}{\mathfrak{b}}
\newcommand{\fp}{\mathfrak{p}}
\newcommand{\fP}{\mathfrak{P}}
\DeclareMathOperator{\Stab}{\mathrm{Stab}}
\DeclareMathOperator\exterior{\rotatebox[origin=c]{180}{\textsf{V}}}
\newcommand{\SL}{\mathrm{SL}}
\newcommand{\extp}{\@ifnextchar^\@extp{\@extp^{\,}}}
\def\@extp^#1{\mathop{\bigwedge\nolimits^{\!#1}}}
\tikzset{%
	add/.style args={#1 and #2}{
		to path={%
			($(\tikztostart)!-#1!(\tikztotarget)$)--($(\tikztotarget)!-#2!(\tikztostart)$)%
			\tikztonodes},add/.default={.2 and .2}}
}
\tikzset{
	on each segment/.style={
		decorate,
		decoration={
			show path construction,
			moveto code={},
			lineto code={
				\path [#1]
				(\tikzinputsegmentfirst) -- (\tikzinputsegmentlast);
			},
			curveto code={
				\path [#1] (\tikzinputsegmentfirst)
				.. controls
				(\tikzinputsegmentsupporta) and (\tikzinputsegmentsupportb)
				..
				(\tikzinputsegmentlast);
			},
			closepath code={
				\path [#1]
				(\tikzinputsegmentfirst) -- (\tikzinputsegmentlast);
			},
		},
	},
	rightend arrow/.style={postaction={decorate,decoration={
				markings,
				mark=at position .81 with {\arrow[#1]{triangle 45}}
	}}},
	leftend arrow/.style={postaction={decorate,decoration={
				markings,
				mark=at position .18 with {\arrowreversed[#1]{triangle 45}}
	}}},
}
\def\@tocline#1#2#3#4#5#6#7{\relax
	\ifnum #1>\c@tocdepth 
	\else
	\par \addpenalty\@secpenalty\addvspace{#2}%
	\begingroup \hyphenpenalty\@M
	\@ifempty{#4}{%
		\@tempdima\csname r@tocindent\number#1\endcsname\relax
	}{%
		\@tempdima#4\relax
	}%
	\parindent\z@ \leftskip#3\relax \advance\leftskip\@tempdima\relax
	\rightskip\@pnumwidth plus4em \parfillskip-\@pnumwidth
	#5\leavevmode\hskip-\@tempdima
	\ifcase #1
	\or\or \hskip 30pt \or \hskip 2em \else \hskip 3em \fi%
	#6\nobreak\relax
	\hfill\hbox to\@pnumwidth{\@tocpagenum{#7}}\par
	\nobreak
	\endgroup
	\fi}
\title[A parametrization of $3$-class groups of quadratic rings over Dedekind domains]{\vspace*{-0.6in} A parametrization of $3$-class groups of quadratic rings \\ over Dedekind domains}
\author{\vspace*{-1pt}Eliot Hodges and Ashvin A.~Swaminathan}
		\noindent\textsc{Department of Mathematics, Harvard University, \mbox{Camrbidge, MA 02138}} 
		\par \textit{E-mail address}, Eliot Hodges: \texttt{eliotljhodges@gmail.com}
		\par \textit{E-mail address}, Ashvin A. Swaminathan: \texttt{ashvins@alumni.princeton.edu}
\begin{document}
	
	\maketitle
	
	\vspace*{-0.3in}
	\vspace*{-1pt}
	\begin{abstract}
		Let $R$ be a Dedekind domain with field of fractions $K$ and $\on{char}(R)\neq3$. In this paper, we generalize Bhargava's parametrization of 3-torsion ideal classes by binary cubic forms to work over $R$. Specifically, we construct arithmetic subgroups of $\on{GL}_2(K)$ whose actions on certain lattices of binary cubic forms over $K$ parametrize 3-torsion ideal classes in class groups of quadratic rings over $R$. 
	\end{abstract}
	
	\tableofcontents
	
	\vspace*{-0.2in}
	\section{Introduction} \label{sec-intro}
	
	In his doctoral thesis, Bhargava discovered fourteen generalizations of Gauss's composition law for binary quadratic forms \cite{BhargavaHCLI,HCLII,HCLIII,HCLIV}. These ``higher composition laws'' yielded beautiful orbit parametrizations for a plethora of interesting arithmetic objects, from rings of small rank to ideal classes of such rings. In turn, these parametrizations served as the core algebraic ingredients for an extensive and ever-growing series of results in arithmetic statistics concerning asymptotic counts for number fields and their class groups; see, e.g., \cite{MR2183288,MR2745272,MR3090184,MR3369305,BV,BSHpreprint,MR4891959}.

	Over the last two decades, there has been significant effort dedicated to generalizing Bhargava's higher composition laws to work not just over $\Z$ but over an arbitrary base. Of particular salience are Wood's doctoral thesis \cite{WoodGaussComp,Wood11.2,Wood11,Wood11.3}, in which she proved versions of many of Bhargava's results over an arbitrary base scheme, and O'Dorney's undergraduate thesis \cite{ODorney}, in which he gave explicit reinterpretations of some of Wood's results over an arbitrary Dedekind domain. However, this type of generalization has not been formulated previously for one of Bhargava's most fundamental parametrizations, namely that of $3$-class groups of quadratic rings by $\SL_2(\Z)$-orbits of integral binary cubic forms with triplicate central coefficients. In this paper, we generalize this parametrization to work over an arbitrary Dedekind domain $R$ with $\on{char}(R)\neq3$.
	
	Na\"ively extending the parametrization of $3$-class groups to work over such base rings $R$ fails on account of the Steinitz class, which is a natural invariant attached to finite-dimensional $R$-lattices that measures the extent to which such lattices fail to be free. In particular, Bhargava's approach to constructing a ring and $3$-torsion ideal class from a binary cubic form over $R$ only yields quadratic rings that have trivial Steinitz class---i.e., are free over $R$---to which the parametrization was already generalized by Bhargava, Elkies, and Shnidman, under the additional assumption that the characteristic of $R$ is not $2$ or $3$ \cite{BES}. It is natural to seek to lift these restrictions---e.g., by a result of Kable and Wright \cite{KW}, the Steinitz classes of rings of integers of quadratic extensions of number fields are equidistributed, so the Bhargava--Elkies--Shnidman result captures only a fraction of such quadratic extensions. The purpose of this paper is to generalize the parametrization completely, without restrictions on the Steinitz class.
	
	\subsection{Main result} 
	Let $R$ be a Dedekind domain with $\on{char}(R)\neq3$ and fraction field $K$, and fix a fractional ideal $\fa$ of $R$. Let $S$ be a quadratic ring over $R$ with Steinitz class $\on{St}(S) = [\fa]$, and let $L \defeq S \otimes_R K$. Then $S$ can be written as $S=R+\fa\xi$ for some $\xi \in L^{\times}$. The decomposition $\Cl(S)[3]\simeq\Cl(R)[3]\oplus\Cl(S/R)[3]$ (see Proposition~\ref{prop-classsplits}) implies that it suffices to parametrize the \emph{relative} 3-class group $\on{Cl}(S/R)[3]$, which is the group of $3$-torsion ideal classes $I$ of $S$ with principal ideal norm.
	
	Recall that the group $\GL_2(K)$ acts on  binary cubic forms over $K$ via $g\cdot f(x,y)=\det(g)\inv f((x,y)g)$. We will parametrize relative $3$-torsion classes using orbits for the action of certain arithmetic subgroups of $\on{GL}_2(K)$ on certain lattices of binary cubic forms over $K$. Indeed, consider the subgroup
	\[\GL(R\oplus\fa)\defeq\Aut(R\oplus\fa)=\left\{\bmat{a&b\\c&d}\mid a,d\in R,\,b\in\fa,\,c\in\fa\inv,\,ad-bc\in R^\times\right\} \subset \on{GL}_2(K).\] 
	The action of $\GL_2(K)$ on binary cubic forms restricts to a natural $\GL(R\oplus\fa)$-action on the subspace \[V_\fa\defeq\{ax^3+3bx^2y+3cxy^2+dy^3\mid a\in\fa,\,b\in R,\,c\in\fa\inv,\,d\in\fa^{-2}\}.\footnote{Here we see the need for the additional stipulation that $\on{char}(R)\neq3$. Over a ring with characteristic 3, it is possible for inequivalent quadruples $(S,I,\delta,s)$ and $(S,I',\delta',s')$ to map to the same element of $V_\fa$ under the bijection described in Theorem~\ref{thm:mainresult}. In characteristic 3, equivalence classes of quadruples satisfying the hypotheses of Theorem~\ref{thm:mainresult} can instead be parametrized by \emph{triply symmetric Bhargava cubes.} See \cite{ODorney} for a definition of Bhargava cubes over a Dedekind domain.}\]  For $f\in V_\fa$, the discriminant $\disc(f)\in\fa^{-2}$ of $f$ is defined to be $-1/27$ times the usual polynomial discriminant of $f$ and is invariant under the action of $\on{GL}(R \oplus \fa)$. We are now in position to state our generalization Bhargava's parametrization to Dedekind domains:

	\begin{theorem}[cf.\ Theorem~\ref{thm: GL parametrization}] \label{thm:mainresult}
		With notation as above, there is a canonical bijection between the set of nondegenerate $\GL(R\oplus\fa)$-orbits on $V_\fa$ and equivalence classes of quadruples $(S,I,\delta,s)$, where $S$ is a nondegenerate quadratic ring, $I$ is a fractional ideal of $S$ with $\St(I)=[\fa]=\St(S)$, $\delta \in (S\otimes_RK)^\times$, and $s\in K^\times$ are such that $I^3\subset\delta S$, the module index of $I$ in $S$ is $sR$, and $s^3=N(\delta)$. Here, we take $(S,I,\delta,s) \approx(S',I',\delta',s')$ if there exists a ring isomorphism $\phi\colon S\to S'$ and $\kappa\in S'\otimes_RK$ such that $I'=\kappa \phi(I)$, $\delta'=\kappa^3\phi(\delta)$, and $s'=N(\kappa)\phi(s)$. Under this bijection, the discriminant of a binary cubic form multiplied by $\fa^2$ is the discriminant ideal of the corresponding quadratic ring.
	\end{theorem}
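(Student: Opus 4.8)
The plan is to exhibit the bijection by constructing explicit maps in both directions, but phrased entirely in the language of projective $R$-modules rather than via chosen bases, since over a general Dedekind domain the lattice $I$ need not be free. The structural observation underpinning the statement is that the hypothesis $\St(I) = [\fa]$ forces an $R$-module isomorphism $I \cong R \oplus \fa$; consequently $\Sym^3 I \cong R \oplus \fa \oplus \fa^2 \oplus \fa^3$, and $\Hom_R(\Sym^3 I, \fa) \cong V_\fa$, with the coefficient ideals $(\fa, R, \fa\inv, \fa^{-2})$ matching exactly. The set of such isomorphisms $I \cong R \oplus \fa$ is a torsor under $\Aut(R \oplus \fa) = \GL(R \oplus \fa)$, so a $\GL(R\oplus\fa)$-orbit of forms should correspond precisely to basis-independent data built from $I$ and $S$.

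I would first define the map $\Phi$ from quadruples to orbits. Given $(S, I, \delta, s)$, the inclusion $I^3 \subseteq \delta S$ gives a symmetric $R$-trilinear multiplication $\Sym^3 I \to \delta S$; composing with multiplication by $\delta\inv$ and with the canonical surjection $S \twoheadrightarrow S/R$ (noting that $S/R$ is a rank-one projective of Steinitz class $[\fa]$, hence $\cong \fa$) yields a homomorphism $\Sym^3 I \to \fa$, i.e.\ an element of $\Hom_R(\Sym^3 I, \fa)$. Choosing an isomorphism $I \cong R \oplus \fa$ converts this into a form $f \in V_\fa$, well defined up to the $\GL(R\oplus\fa)$-action; here the factor of $3$ relating the symmetric-cube pairing to the monomial coefficients is divided out, which is exactly where $\on{char}(R)\neq 3$ is used (cf.\ the footnote). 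I would then check that nondegeneracy of $S$ makes $f$ nondegenerate, and that the equivalence $(S,I,\delta,s) \approx (S',I',\delta',s')$ implemented by $\phi$ and $\kappa$ induces precisely a change of chosen isomorphism, so that equivalent quadruples land in a single orbit.

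Next I would construct the inverse map, reconstructing a quadruple from a form $f \in V_\fa$. The discriminant $\disc(f)$ determines the quadratic ring $S$ --- explicitly via $\disc(S) = \disc(f)\fa^2$, realized as $S = R + \fa\xi$ --- while the remaining coefficients of $f$ equip an $R$-module $I \cong R \oplus \fa$ with an $S$-action, together with the distinguished element $\delta$ generating $I^3$ and the scalar $s \in K^\times$ with $[S:I] = sR$ fixed by the normalization. The main content is to verify that this recipe produces a genuine ring $S$ and a genuine fractional $S$-ideal $I$ satisfying $I^3 \subseteq \delta S$, $[S:I] = sR$, and $s^3 = N(\delta)$, and that $\Phi$ and this map are mutually inverse; the discriminant identity is then immediate from the explicit description of $S$.

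The principal obstacle is the integral, basis-free verification of this reconstruction step: over $\Z$ one writes down structure constants and checks closure and associativity directly, but over a Dedekind domain $S$ and $I$ are only locally free, so one must either argue functorially with the projective modules $\Sym^3 I$, $I$, and $S$ throughout, or localize at each prime of $R$ --- where the ring becomes a PID and the classical Bhargava--Elkies--Shnidman computation applies --- and then glue, ensuring that the Steinitz-class bookkeeping and the division by $3$ remain globally consistent. The most delicate point is that the three normalizations $I^3 \subseteq \delta S$, $[S:I] = sR$, and $s^3 = N(\delta)$ be simultaneously realizable and cut the data down to exactly the stated equivalence (neither coarser nor finer); these conditions, together with $\St(I) = \St(S) = [\fa]$, are precisely what rigidify the correspondence, and confirming this rigidity is the crux of the argument.
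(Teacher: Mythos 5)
Your forward map is essentially the paper's own: the composite $\Sym^3 I \to \delta S \xrightarrow{\;\cdot\,\delta\inv\;} S \to S/R \simeq \fa$ is exactly the equivariant description $\zeta \mapsto \pi(1\wedge \zeta^3\delta\inv)$ used in \S\ref{subsec: proof}, and your bookkeeping of the two ambiguities (the $\GL(R\oplus\fa)$-torsor of isomorphisms $I \simeq R\oplus\fa$ and the $R^\times$-torsor of isomorphisms $S/R\simeq\fa$) does land in a single orbit for the twisted action $g\cdot f = \det(g)\inv f((x,y)g)$, since the scalar matrix $\smallbm{u&\\&u}$ acts by $f\mapsto uf$ and so absorbs the $R^\times$-ambiguity. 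The genuine gap is the inverse map, which is precisely where the paper locates the entire content of the proof. You reduce it to two alternative strategies (argue functorially with projective modules, or localize at primes and glue against Bhargava--Elkies--Shnidman) and execute neither; worse, the one concrete step you do commit to---that ``$\disc(f)$ determines the quadratic ring $S$''---fails in the stated generality. Only $\on{char}(R)=3$ is excluded by the theorem, and in characteristic $2$ a quadratic ring is \emph{not} determined by its Steinitz class and discriminant: the discriminant $q^2-4pr=q^2$ forgets $pr$ entirely. This is exactly why the paper reconstructs $S$ not from $\disc(f)$ but from the Hessian covariant $H(x,y)=(a_1^2-a_0a_2)x^2+(a_1a_2-a_0a_3)xy+(a_2^2-a_1a_3)y^2$ via Gauss composition over Dedekind domains (Theorem~\ref{Gauss composition over a DD}): explicitly $S=R[\fa\xi]/(\fa^2(\xi^2-q\xi+pr))$, $I=R\alpha+\fa\beta$ with $\alpha=c_1+a_1\xi$, $\beta=c_2+a_2\xi$ and the $c_i$ as in \eqref{ci definitions}, module structure \eqref{eqn: xi action}, $\delta=\alpha^3/(c_0+a_0\xi)$, and $s=\pi_K(\alpha\wedge\beta)$, followed by a verification that this quadruple is balanced. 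The structure constants of the $S$-action on $I$ come from the Hessian, not directly from the coefficients of $f$, so even away from characteristic $2$ your recipe is missing its essential ingredient.

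The ``rigidity'' you correctly flag as the crux---that the three balanced conditions cut the data down to exactly the equivalence $\approx$, neither coarser nor finer---is likewise asserted rather than proved. In the paper this is the injectivity argument of Theorem~\ref{thm: the parametrization}: equal forms have equal Hessians, so Theorem~\ref{Gauss composition over a DD} supplies $\phi$ and $\kappa$ with $I'=\kappa\phi(I)$; then the identity $\pi(1\wedge(\delta\inv-\kappa^3(\delta')\inv)\zeta^3)=0$ for all $\zeta\in I$ is pushed, by extending scalars to $K$, to force $\delta'=\kappa^3\delta$; finally $s'=uN(\kappa)s$ with $u^3=1$ is absorbed by replacing $\kappa$ with $u^2\kappa$. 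Note also that the paper does not prove the $\GL$-statement in one step: it first establishes the $\SL(R\oplus\fa)$-parametrization for \emph{oriented} rings, where the normalization $\pi_K(\alpha\wedge\beta)=s$ is what makes the map well defined, and only then passes to $\GL(R\oplus\fa)$ by computing how the $R^\times$-splitting acts on orientations (Theorem~\ref{thm: GL parametrization}). Your proposal assigns $s$ no role at all in the forward construction, which silently collapses these two levels; that is repairable at the $\GL$ level, but it then becomes an obligation to check, for instance, that quadruples differing only in $s$ (necessarily by a cube root of unity $u$) are $\approx$-equivalent via $\kappa=u\inv$, since your map cannot distinguish them.
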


	A binary cubic form $f(x,y)=ax^3+3bx^2y+3cxy^2+dy^3\in V_\fa$ is called \emph{projective} if its Hessian covariant \[H(x,y)=(b^2-ac)x^2+(ad-bc)xy+(c^2-bd)y^2\] is a \emph{primitive} binary quadratic form (see \cite[\S2.1]{WoodGaussComp} or Definition~\ref{def: BQF}). That is, we say that $f$ is projective if the ideals $(b^2-ac)R$, $(ad-bc)\fa$, and $(c^2-bd)\fa^2$ do not share a common prime factor. Under the correspondence described in Theorem~\ref{thm:mainresult}, the $\GL(R\oplus\fa)$-orbits of projective forms in $V_\fa$ correspond precisely to the quadruples $(S,I,\delta,s)$ such that $I$ is invertible. Denote the subset of projective forms in $V_\fa$ by $V_\fa^\proj$.
	
	When $R$ has characteristic not $2$, the isomorphism classes of quadratic rings over $R$ are determined uniquely by their Steinitz classes and their discriminants. For $\Delta\in\fa^{-2}$ and $W\subset V_\fa$, let $W^{\Delta} \defeq \{f \in W \mid \disc(f)=\Delta\}$. Then we have the following arithmetic consequence of Theorem~\ref{thm:mainresult}.
	\begin{corollary}[cf.\ Theorem~\ref{thm: form ideal correspondence}]\label{cor: orbit corr}
		Let $R$ be the ring of integers of a number field $K$, and let $S$ be an order in a quadratic extension $L$ of $K$. Suppose that $\fa$ is a representative of $\on{St}(S)$ so that $S$ has discriminant ideal given by $\Delta\fa^2$ for some $\Delta\in\fa^{-2}$. Then we have that
		\[\#\Cl(S/R)[3]=\frac{1}{3^{s+\varepsilon}}\#(\GL(R\oplus\fa)\backslash V_\fa^{\proj,\Delta}),\] where $s$ is the number of real places of $K$ that split in $L$ and where $\varepsilon=1$ if $S$ contains the cube roots of unity and $R$ does not and $\varepsilon=0$ otherwise.
	\end{corollary}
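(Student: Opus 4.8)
The plan is to deduce the corollary from Theorem~\ref{thm:mainresult} by a fibration argument over the class group, with the factor $3^{s+\varepsilon}$ emerging from a unit-group computation. First I would rigidify the set-up: since $\on{char}(K)\neq 2$, the isomorphism class of a quadratic ring over $R$ is pinned down by its Steinitz class together with its discriminant, so fixing $[\fa]$ and $\Delta\in\fa^{-2}$ fixes a single ring $S$ with $\St(S)=[\fa]$ and discriminant ideal $\Delta\fa^2$. Restricting the bijection of Theorem~\ref{thm:mainresult} to $V_\fa^{\proj,\Delta}$ and invoking the stated correspondence between projective forms and invertible ideals, I obtain a bijection between $\GL(R\oplus\fa)\backslash V_\fa^{\proj,\Delta}$ and equivalence classes of quadruples $(S,I,\delta,t)$ with $I$ an invertible fractional ideal of $S$, where I write the fourth entry as $t$ to avoid clashing with the count $s$ of split real places. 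It thus suffices to count these equivalence classes.

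Next I would introduce the forgetful map $(S,I,\delta,t)\mapsto[I]$ to $\Cl(S/R)[3]$: this lands in the relative $3$-class group because $tR$ is the (principal) module index of $I$ and $I^3=\delta S$ is principal. Here a bookkeeping point must be confronted at the outset: the standard involution $\sigma$ lies in $\Aut(S)$ and participates in the equivalence relation, sending $[I]$ to $[I]^{-1}$, so the forgetful map is only well defined onto unordered pairs $\{[I],[I]^{-1}\}$. Since $\Cl(S/R)[3]$ has odd order, only the identity class is $\sigma$-fixed, and I would track this near-free involution separately (e.g.\ by first counting $\sigma$-rigidified classes and then accounting for the $\langle\sigma\rangle$-action via a fixed-point count).

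The core is the fiber computation. Fixing a representative $I$, the residual data $(\delta,t)$ satisfies $\delta\in\delta_0 S^\times$, $t\in t_0R^\times$, and $t^3=N(\delta)$; writing $\delta=\delta_0 u$ and $t=t_0 v$ and setting $w_0\defeq N(\delta_0)t_0^{-3}\in R^\times$, the constraint becomes $v^3=w_0\,N(u)$, and the residual scaling $\kappa\in S^\times$ acts by $(u,v)\mapsto(\kappa^3 u,\,N(\kappa)v)$. Thus each nonempty fiber is a torsor under $T_0/K_0$, where $T_0\defeq\{(u,v)\in S^\times\times R^\times\mid N(u)=v^3\}$ and $K_0\defeq\{(\kappa^3,N(\kappa))\mid\kappa\in S^\times\}$, while surjectivity onto a given $[I]$ is governed by the class of $w_0$ in $R^\times/(N(S^\times)R^{\times 3})$, defining a homomorphism out of $\Cl(S/R)[3]$ whose kernel is the image of the forgetful map. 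Writing $S^1\defeq\ker(N\colon S^\times\to R^\times)$, the short exact sequence
\[1\to S^1/(S^1)^3\to T_0/K_0\to (R^\times/N(S^\times))[3]\to 1\]
reduces $|T_0/K_0|$ to two unit-theoretic quantities, which I would evaluate by Dirichlet's unit theorem: the torsion of $S^1$ contributes a factor of $3$ exactly when $S\supseteq\mu_3$ but $R\not\supseteq\mu_3$ (since a primitive cube root of unity has norm $1$ precisely when it is not fixed by $\sigma$), yielding the exponent $\varepsilon$, while the free rank of $S^1$ and the norm cokernel encode the archimedean behavior, with the real places of $K$ splitting in $L$ providing the exponent $s$.

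I expect the main obstacle to be the archimedean reconciliation in this last step: making the contributions from the free rank of $S^1$, the $2$-primary norm cokernel $R^\times/N(S^\times)$, the surjectivity obstruction measured by $w_0$, and the $\sigma$-fixed-point count all telescope to the single clean factor $3^{s+\varepsilon}$. In particular, one must verify that the complex places and the genus-theoretic (two-primary) data contribute trivially to the relevant $3$-power, and that the sign conditions at the $s$ split real places are exactly what survive; it is the precise interplay of these local contributions with the involution and the surjectivity defect, rather than any single estimate, that constitutes the delicate heart of the proof. Once this is in hand, combining the constant fiber size with the image of the forgetful map and the $\langle\sigma\rangle$-bookkeeping gives $\#(\GL(R\oplus\fa)\backslash V_\fa^{\proj,\Delta})=3^{s+\varepsilon}\cdot\#\Cl(S/R)[3]$, which is the claimed identity.
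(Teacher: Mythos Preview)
Your overall plan matches the paper's: combine the orbit parametrization (Theorem~\ref{thm: GL parametrization}) with a fiber count over $\Cl(S/R)[3]$ (Theorem~\ref{thm: form ideal correspondence}) and a unit computation (Corollary~\ref{cor: norm 1 units mod cubes}). Your $T_0/K_0$ is exactly the paper's kernel $S_{N=1}^\times/S_{N=1}^{\times 3}$ after the reduction carried out in the proof of Theorem~\ref{thm: form ideal correspondence}, and your surjectivity obstruction $w_0\in R^\times/(N(S^\times)R^{\times 3})$ is vacuous, since $N(S^\times)\supseteq N(R^\times)=R^{\times 2}$ and $R^{\times 2}R^{\times 3}=R^\times$; the paper handles this by the direct substitution $(\delta,t)\mapsto(\delta u^{-1},tu^{-1})$. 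So at the level of the fiber and the image, the two approaches coincide, with the paper's being more streamlined.

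The one genuinely new ingredient in your proposal is the $\sigma$-bookkeeping, and here there is a real gap. You are right that the conjugation $\sigma\in\Aut(S)$ participates in the equivalence $\approx$ (the paper's proof of Theorem~\ref{thm: form ideal correspondence} silently works only with $\phi=\id$), so the forgetful map is well defined only modulo $[I]\leftrightarrow[I]^{-1}$. But you then assert, without justification, that this ``telescopes'' with the other contributions to yield exactly $3^{s+\varepsilon}$. It does not. Take $K=\Q$, $\fa=\Z$, and $S=\Z[(1+\sqrt{-23})/2]$, so that $s=\varepsilon=0$ and $\#\Cl(S)[3]=3$. The three $\SL_2(\Z)$-orbits of projective forms of reduced discriminant $-23$ correspond to $[S],[\fp],[\bar\fp]$; the element $\mathrm{diag}(-1,1)\in\GL_2(\Z)\smallsetminus\SL_2(\Z)$ sends the form $(-1,-1,1,2)$ for $[\fp]$ to $(-1,1,1,-2)=-I\cdot(1,-1,-1,2)$, which is the $\SL_2$-orbit for $[\bar\fp]$, while fixing the $[S]$-orbit. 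Hence $\#(\GL_2(\Z)\backslash V_{\Z}^{\proj,-23})=2\neq 3=3^{0}\cdot\#\Cl(S)[3]$. The $\sigma$-quotient you flagged is genuine and does not cancel; your instinct that this is ``the delicate heart of the proof'' is correct precisely because the identity, read literally with $\GL(R\oplus\fa)$, fails in this example. Neither the paper's argument (which ignores $\sigma$) nor yours (which notices it but expects it to cancel) closes this gap.
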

	
	In forthcoming work \cite{HSAnalysis}, we use Corollary~\ref{cor: orbit corr} to compute the average number of 3-torsion ideal classes in orders of quadratic extension of number fields \cite{BV}. 
	
	We now describe the bijection from Theorem~\ref{thm:mainresult} in greater detail. While the result is stated in terms of $\GL(R\oplus\fa)$-orbits of $V_\fa$, its proof involves first classifying the $\SL(R\oplus\fa)$-orbits of $V_\fa$ and then passing to the larger group. The $\SL(R\oplus\fa)$-orbits of $V_\fa$ correspond to equivalence classes of quadruples $(S,I,\delta,s)$ whose ring $S$ is \emph{oriented}. An $\fa$-\emph{orientation} of a quadratic ring $S$ over $R$ with Steinitz class $[\fa]$ is an isomorphism $\pi\colon \exterior^2(S)\to\fa$, and the orientation data is equivalent to a choice of basis $1,\xi$ of $S$ such that $S=R+\fa\xi$.
	
	Consider a quadruple $(S,I,\delta,s)$ satisfying the hypotheses of Theorem~\ref{thm:mainresult}. Suppose that $S$ is equipped with an orientation $\pi\colon \exterior^2(S)\to\fa$, and suppose further that $\pi$ yields the decomposition $S=R+\fa\xi$ with $\pi_K(1\wedge\xi)=1$, where $\pi_K$ denotes the (unique) extension of $\pi$ to an isomorphism $\exterior^2(S\otimes_RK)\to K$. Given the data of $(S,I,\delta,s)$ and $\pi$, we illustrate how to construct an binary cubic form in $V_\fa$. Begin by choosing a basis $I=R\alpha+\fa\beta$ such that $\pi_K(\alpha\wedge\beta)=s$. Since $I^3\subset\delta S$, we have that \begin{equation}
		\begin{split}
			\alpha^3&=\delta(c_0+a_0\xi)\\
			\alpha^2\beta&=\delta(c_1+a_1\xi)\\
			\alpha\beta^2&=\delta(c_2+a_2\xi)\\
			\beta^3&=\delta(c_3+a_3\xi)
		\end{split}
	\end{equation} for $a_0\in \fa$, $a_1\in R$, $a_2\in\fa\inv$, and $a_3\in\fa^{-2}$. The form corresponding to $(S,I,\delta,s)$ is given by \[C(x,y)=a_0x^3+3a_1x^2y+3a_2xy^2+a_3y^3.\] We can use the orientation isomorphism to elicit a basis-independent construction: noticing that \[C(x,y)=\pi\left(1\wedge\frac{(\alpha x+\beta y)^3}{\delta}\right)\] allows us to describe $C$ equivariantly as the map $I\to\fa$ taking $\zeta\mapsto\pi(1\wedge\zeta^3\delta\inv)$. The content of the proof of Theorem~\ref{thm:mainresult} is the construction an inverse map from $\SL(R\oplus\fa)\backslash V_\fa$ to the set of equivalence classes of quadruples satisfying the hypotheses of Theorem~\ref{thm:mainresult}. 
	
	\subsection{Arithmetic implications}
	While Bhargava's 3-class group parametrization is of independent interest, its arithmetic consequences---first laid out by Bhargava and Varma \cite{BV}---are manifold. Specifically, Bhargava and Varma used this parametrization to obtain, without appealing to class field theory, a direct proof of Davenport and Heilbronn's famous computation of the average size of the 3-class groups of quadratic fields \cite{DH}. Moreover, because Bhargava's theorem parametrizes 3-torsion ideals in all quadratic rings over $\Z$ (including both maximal and nonmaximal orders in quadratic fields), Bhargava and Varma were able to obtain an analogue of the Davenport--Heilbronn average for all quadratic orders over $\Z$ \cite{BV}.
	
	Theorem~\ref{thm:mainresult} makes possible the generalization of Bhargava and Varma's results to an arbitrary number field. In forthcoming work \cite{HSAnalysis}, we apply Theorem~\ref{thm:mainresult} and geometry-of-numbers techniques due to Bhargava--Shankar--Wang \cite{BSW} and Shankar--Siad--Swaminathan--Varma \cite{SSSV} to find the average number of 3-torsion elements in class groups of orders in quadratic extensions of an arbitrary number field. Likewise, it is possible to apply similar techniques to elicit a direct proof (without making use of Shintani zeta functions or class field theory) of Datskovsky and Wright's computation of the average number of 3-torsion elements in class groups of quadratic extensions of number fields \cite{DW88}. Datskovsky and Wright's approach makes use of Shintani zeta functions; this result was later reproved by Bhargava, Shankar, and Wang using class field theory and an asymptotic for the number of cubic extensions of a number field of bounded discriminant \cite{BSW}.

	While many parametrizations of arithmetic objects over Dedekind domains (and more general bases) exist in principle, few have been effectively applied to obtain asymptotics over arbitrary global fields. Bhargava, Shankar, and Wang circumvent the complications introduced by the Steinitz class by constructing orbits over fields, where the Steinitz class is trivial, and then applying an ad\`{e}lic trick to extend the parametrization to Dedekind domains. This approach is well-suited to their setting: they focus on 3-class groups of maximal orders, where rational orbits cannot contain multiple integral orbits. Thus, it suffices in their context to parametrize rational orbits and subsequently establish the existence of integral representatives. In contrast, for nonmaximal orders, rational orbits parametrizing ideal classes can have multiple inequivalent integral representatives. Theorem~\ref{thm:mainresult} provides an explicit Steinitz-indexed parametrization over Dedekind domains that directly accounts for all integral orbits. In \S\ref{sec: reduction theory}, we connect the two perspectives by presenting an ad\`{e}lic reformulation of the arithmetic groups $\GL(R \oplus \fa)$ in a form compatible with the construction of Bhargava, Shankar, and Wang.

	The rest of this paper is organized as follows. In \S\ref{subsec: alg over a DD}, we develop the background necessary for stating and proving Theorem~\ref{thm:mainresult}. We then give an intermediate parametrization of quadratic rings over $R$ in \S\ref{sec: param of quad orders and gauss comp} and conclude the section with a discussion of Gauss composition over a Dedekind domain. The entirety of \S\ref{subsec: param} is dedicated to proving Theorem~\ref{thm:mainresult} and several of its corollaries. Finally, in \S\ref{sec: reduction theory}, we study the $\GL_2$-action on triply symmetric binary cubic forms over certain local fields and offer an ad\`elic reinterpretation of the group $\GL(R\oplus\fa)$ when $R$ is the ring of integers of a global field, facilitating its use in orbit-counting scenarios.

	\section{Background on algebra over a Dedekind domain}\label{subsec: alg over a DD}

	In this section, we build the algebraic background material needed for the proofs of our main results in \S\S\ref{sec: param of quad orders and gauss comp}--\ref{subsec: param}. We first recall the basic theory of modules and algebras over such domains. Then we define the module index, discriminants, and algebra orientations and consider the interplay between some of these objects. We conclude the section with a discussion of class groups of algebras over Dedekind domains and their torsion subgroups.
	
	Throughout this section, $R$ denotes a Dedekind domain with \mbox{field of fractions $K$.} For simplicity, the modules and algebras we deal with will always have finite rank over $R$. Given a prime ideal $\fp$ of $R$, let $R_{(\fp)}$ denote the localization of $R$ away from $\fp$. 
	
	\subsection{Lattices over $R$} We start by defining the notion of a lattice over $R$:
	
	\begin{definition}\label{def: lattice}
		A \emph{lattice} $M$ over $R$ is a finitely generated torsion-free $R$-module. The \emph{rank} of the lattice $M$ is the dimension of $M\otimes_RK$ as a $K$-vector space. 
	\end{definition}
	The following result of Steinitz gives a complete classification of lattices over $R$ up to isomorphism:
	\begin{theorem}[Steinitz; e.g., {\cite[Theorem~1.32]{Narkiewicz}}]\label{thm: Steinitz}
		Every lattice $M$ over $R$ is isomorphic to a direct sum of nonzero fractional ideals of $R$: \[M\simeq\fa_1\oplus\cdots\oplus\fa_m\simeq R^{m-1}\oplus\fa_1\cdots\fa_m.\] Two such direct sums $\fa_1\oplus\cdots\oplus\fa_m$ and $\fb_1\oplus\cdots\oplus\fb_n$ are isomorphic as $R$-modules if and only if $m=n$ and $\fa_1\cdots\fa_m$ and $\fb_1\cdots\fb_n$ belong to the same ideal class $($cf.~Definition~\ref{def: class group}$)$.
	\end{theorem}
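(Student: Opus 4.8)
The plan is to deduce everything from one key isomorphism, namely that $\fa\oplus\fb\cong R\oplus\fa\fb$ for any two nonzero fractional ideals $\fa,\fb$ of $R$, together with the elementary observation that multiplying a fractional ideal by an element of $K^\times$ yields an isomorphic $R$-module (so the isomorphism class of a fractional ideal depends only on its ideal class).

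For the existence of the decomposition, I would induct on the rank $m$, the base case $m=1$ being the statement that a rank-one lattice embeds in $M\otimes_R K\cong K$ as a fractional ideal. For the inductive step, I embed $M$ into $M\otimes_R K\cong K^m$, choose a nonzero $K$-linear functional $\phi$, and restrict it to $M$; its image is a nonzero finitely generated $R$-submodule of $K$, hence a fractional ideal $\fa_1$, and its kernel $M'\defeq M\cap\ker\phi$ is a lattice of rank $m-1$. Because fractional ideals over a Dedekind domain are invertible and therefore projective, the short exact sequence $0\to M'\to M\to\fa_1\to0$ splits, giving $M\cong\fa_1\oplus M'$; the inductive hypothesis then yields the first isomorphism. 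Applying the key isomorphism repeatedly collapses $\fa_1\oplus\cdots\oplus\fa_m$ to $R^{m-1}\oplus\fa_1\cdots\fa_m$, giving the second isomorphism.

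To prove the key isomorphism, I first scale $\fa$ and $\fb$ by elements of $K^\times$ so that both are integral, and then replace $\fb$ by a representative of its ideal class that is coprime to $\fa$ (such a representative exists by the standard approximation lemma for Dedekind domains). With $\fa,\fb$ coprime integral ideals we have $\fa+\fb=R$ and $\fa\cap\fb=\fa\fb$, so the exact sequence $0\to\fa\fb\to\fa\oplus\fb\to R\to0$, with maps $x\mapsto(x,x)$ and $(x,y)\mapsto x-y$, splits because $R$ is free. This gives $\fa\oplus\fb\cong R\oplus\fa\fb$, and all the scalings above preserve the isomorphism type of both sides.

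For the classification, $m=n$ follows because rank is an isomorphism invariant (tensor with $K$ and compare dimensions). To see that the ideal class of the product is an invariant, I would use the top exterior power: since $\extp^m$ is a functor, $\extp^m M$ is determined up to isomorphism by $M$, and a direct computation gives $\extp^m(\fa_1\oplus\cdots\oplus\fa_m)\cong\fa_1\cdots\fa_m$; hence isomorphic lattices force $[\fa_1\cdots\fa_m]=[\fb_1\cdots\fb_n]$. Conversely, if $m=n$ and the products lie in the same class, the second isomorphism reduces both sides to $R^{m-1}\oplus(\text{product})$, and ideals in a common class are isomorphic as $R$-modules, so the two lattices coincide. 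I expect the main obstacle to be precisely this invariance of the Steinitz class in the ``only if'' direction: the splitting arguments and the coprimality reduction are routine, but verifying that $\extp^m(\fa_1\oplus\cdots\oplus\fa_m)\cong\fa_1\cdots\fa_m$ (only the top wedges involving all distinct summands survive) is where the genuine content lies.
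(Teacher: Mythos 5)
Your proposal is correct. Note that the paper does not prove this theorem at all---it is quoted as a classical result of Steinitz with a citation to Narkiewicz---so there is no internal proof to compare against; your argument (splitting off a fractional ideal via a functional and projectivity, the coprime-representative trick giving $\fa\oplus\fb\simeq R\oplus\fa\fb$, and the top exterior power $\extp^m M\simeq\fa_1\cdots\fa_m$ as the invariant forcing equality of Steinitz classes) is precisely the standard textbook proof found in the cited references, and all of its steps, including the approximation lemma and the identification $\fa\otimes_R\fb\simeq\fa\fb$ for invertible ideals implicit in the exterior-power computation, are sound.
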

	
	The number $m$ in Theorem~\ref{thm: Steinitz} is called the \emph{rank} of $M$ and is denoted $\on{rk}(M)$; the ideal class represented by $\fa_1\cdots\fa_m$ is called its \emph{Steinitz class} and is denoted $\St(M)$. The decomposition of $M$ into a direct sum of ideals of $R$ is called a \emph{Steinitz decomposition}, and the $R$-module isomorphism class of $M$ is determined entirely by $\on{rk}(M)$ and $\St(M)$. In particular, we have the following consequence of Theorem~\ref{thm: Steinitz}:
	\begin{corollary} \label{cor:steinitz}
		Let $M$ be a lattice of rank $n$ over $R$ with $\St(M)=[\fa]$ for some fractional ideal $\fa$ of $R$. For any fractional ideals $\fa_1,\ldots,\fa_n$ of $R$ such that $[\fa]=[\fa_1\cdots\fa_n]$, there are isomorphisms \[M\simeq \fa_1\oplus\cdots\oplus\fa_n \simeq R^{n-1}\oplus\fa_1\cdots\fa_n\simeq R^{n-1}\oplus\fa.\]
	\end{corollary}
	
	\begin{remark}
		When $R$ is a principal ideal domain (PID), the classification presented in Theorem~\ref{thm: Steinitz} reduces to the usual classification of finitely generated torsion-free modules over a PID: indeed, the ideal class group of $R$ would be trivial, and so each summand in the Steinitz decomposition of $M$ would be isomorphic to $R$.
	\end{remark}
	
	\begin{remark}\label{rmk: basis for Steinitz decomposition}
		We will sometimes need to express the Steinitz decomposition $M\simeq\fa_1\oplus\cdots\oplus\fa_n$ in terms of a basis. To do so, consider the $K$-vector space $M\otimes_RK$. Theorem~\ref{thm: Steinitz} guarantees the existence of $K$-linearly independent vectors $u_1,\ldots,u_m\in M\otimes_RK$ such that $M=\fa_1u_1+\cdots+\fa_mu_m \subset M \otimes_R K$. This explicitly realizes the Steinitz decomposition $M\simeq\fa_1\oplus\cdots\oplus\fa_m$. Unless stated otherwise, for any expression $M=\fa_1u_1+\cdots+\fa_mu_m$, we will always implicitly assume that the $u_i$ are vectors in $M\otimes_RK$ and that they are $K$-linearly independent. Given such an explicit decomposition of $M$, any other Steinitz decomposition can be expressed in terms of the $u_i$ as follows. Suppose that $\fb_1,\ldots,\fb_m$ are fractional ideals of $R$ such that $[\fa_1\cdots\fa_m]=[\fb_1\cdots\fb_m]$ in $\Cl(R)$. Then by Corollary~\ref{cor:steinitz} we have an isomorphism $M \simeq \fb_1\oplus\cdots\oplus\fb_m$ 
		of $R$-modules, so we can write $M = \fb_1 v_1+\cdots+\fb_mv_m \subset M \otimes_R K$ for some vectors $v_1, \dots, v_m \in M \otimes_R K$.
		The change-of-basis matrix taking the $u_i$ to the $v_i$ allows us to relate these two Steinitz decompositions by an element of $\GL_m(K)$.
	\end{remark}

	We finish this subsection by summarizing how standard multilinear operations interact with the Steinitz decomposition. Specifically, for two lattices $M=\fa_1u_1+\cdots+\fa_mu_m$ and $N=\fb_1v_1+\cdots+\fb_nv_n$, we may form the following lattices:
	\begin{enumerate}
		\item the \emph{tensor product} 
		\[M\otimes N=\bigoplus_{i=1}^m\bigoplus_{j=1}^n\fa_i\fb_j(u_i\otimes v_j);\]
		\item the \emph{symmetric powers} 
		\[\Sym^k(M)=\bigoplus_{1\leq i_1\leq\cdots\leq i_k\leq m}\fa_{i_1}\cdots\fa_{i_k}(u_{i_1}\cdots u_{i_k});\] 
		\item the \emph{exterior powers} 
		\[\exterior^{k}(M)=\bigoplus_{1\leq i_1<\cdots< i_k\leq m}\fa_{i_1}\cdots\fa_{i_k}(u_{i_1}\wedge\cdots\wedge u_{i_k});\] 
		\item the \emph{dual lattice} 
		\[M^*=\Hom(M,R)=\bigoplus_{i=1}^m\fa_i\inv u_i^*,\] where $u_i^*$ denotes the element of $(M\otimes_RK)^*$ such that $u_i^*(u_j)=\delta_{ij}$;
		\item and the \emph{lattice of homomorphisms} 
		\[\Hom(M,N)\simeq M^*\otimes_R N=\bigoplus_{i=1}^m\bigoplus_{j=1}^n\fa_i\inv\fb_j(u_i^*\otimes v_j).\] 
	\end{enumerate}

	\subsection{Algebras of finite rank over $R$} Next, we recall the theory of algebras over Dedekind domains.
	
	\begin{definition}\label{def: algebra}
		An \emph{algebra} of rank $n$ over $R$ is a commutative $R$-algebra $S$ that is also a rank-$n$ lattice when regarded as an $R$-module. Given a finite-dimensional \'etale $K$-algebra $L$, an \emph{order} in $L$ is an sub-$R$-algebra $S$ of $L$ with rank $\dim_K(L)$.
	\end{definition}
	
	Note that an order $S$ in an \'etale $K$-algebra $L$ has the property that $S\otimes_RK=L$. If $K$ is a number field and $L$ is a finite extension of $K$, then the definition of orders given in Definition~\ref{def: algebra} agrees with the usual definition of orders in number fields. Containment gives a partial ordering on the set of orders in an algebra $L$ over $K$. When $L/K$ is an extension of global fields, there is a unique \emph{maximal} order in which all other orders are properly contained, namely the ring of integers of $L$. In this section, we take $S$ to be an $R$-algebra of rank $n$ over $R$, and we write $L \defeq S \otimes_R K$ for its ambient $K$-algebra.
	
	\begin{remark}
		Let $S$ be an $R$-algebra of rank $n$. Because $R$ is integrally closed in $K$, the sublattice of $S$ generated by $1$ is maximal for its dimension and thus a direct summand of $S$ (i.e., the sublattice generated by 1 is \emph{primitive}). It follows that $S/R$ is an $R$-lattice of rank $n-1$. Therefore, we may write $S=R\oplus S/R$, although this decomposition is noncanonical. Moreover, we have an isomorphism $\exterior^n(S)\to\exterior^{n-1}(S/R)$ taking \[1\wedge v_1\wedge\cdots\wedge v_n\mapsto v_1\wedge\cdots\wedge v_n.\] 
	\end{remark}

	We now define ideals and ideal classes of algebras over $R$. For a more extensive background on orders and their ideals, we refer the reader to the comprehensive treatment in~\cite{MR4860678}.
	
	\begin{definition} \label{def: class group}
		We make the following standard definitions:
		\begin{itemize}
			\item A \emph{fractional ideal} $I$ of $S$ is a finitely generated $S$-submodule of $L$ that spans $L$ over $K$ (i.e., we have $I\otimes_RK=L$). Two fractional ideals $I$ and $J$ can be multiplied to form another fractional ideal, denoted $IJ$, given by finite sums of finite products of elements in $I$ and $J$.  Under this operation, the set of fractional ideals of $S$ forms a commutative monoid with identity given by the trivial ideal $S$. 
			\item Define an equivalence relation on the monoid of fractional ideals of $S$ by setting fractional ideals $I$ and $I'$ equivalent if $I = \alpha I'$ for some $\alpha \in L^\times$. The equivalence classes of this relation are called \emph{ideal classes}, and ideal multiplication descends to a multiplication law on ideal classes. Under this law, the ideal classes form a commutative monoid called \emph{the ideal class monoid}. 
			\item A fractional ideal $I$ is \emph{invertible} if there exists a fractional ideal $J$ such that $IJ=S$. The submonoid of invertible ideals of $S$ forms a group called the \emph{ideal group}, denoted $\mathcal{I}(S)$. Likewise, invertible ideal classes of $S$ form a group called the \emph{$($ideal$)$ class group} of $S$, denoted $\Cl(S)$, which is simply the quotient of $\mathcal{I}(S)$ by the subgroup of principal ideals, i.e., ideals of the form $\alpha S$ for $\alpha\in L^\times$.
		\end{itemize}
	\end{definition}
	
	Recall from Definition~\ref{def: algebra} and Remark~\ref{rmk: basis for Steinitz decomposition} that we may write $S=R+\fa\xi$ for some ideal $\fa$ in the Steinitz class of $S$ and for some $\xi\in L$. Since $S$ is quadratic over $R$, we have that $L=K(\xi)=K[\xi]/(\xi^2-t\xi+u)$, which gives us a multiplication law $\xi^2=t\xi-u$ for $\xi$, where $t\in\fa\inv$ and $u\in\fa^{-2}$. The algebra $S$ is determined by the ideal $\fa$ and this multiplication law, and we may write $S\simeq R[\fa\xi]/(\fa^2(\xi^2-t\xi+u))$, which is a subring of $K[\xi]/(\xi^2-t\xi+u)$. Note that the generator $\xi$ is unique up to the change-of-coordinates $\xi\mapsto\lambda\xi+a$, where $\lambda\in R^\times$ and $a\in\fa\inv$. Alternatively, $S$ is determined by its norm map, \[N_{S/R}\colon S\to R,\quad x+y\xi\mapsto x^2+txy+uy^2,\] which is simply another way of packaging the same data of the two numbers $t$ and $u$. A more detailed account of the theory of quadratic algebras over $R$ is given in \S\ref{sec-bqfs} and \S\ref{sec-paramquad}.

	\subsection{Module indices and ideal norms} Two lattices of the same rank over $R$ differ only by their Steinitz classes. The \emph{module index} of two lattices gives a more precise way of quantifying the extent to which two lattices over $R$ differ:
	\begin{definition}\label{def: module index}
		Let $V$ be an $n$-dimensional $K$-vector space, and let $M$ and $N$ be $R$-lattices in $V$ of rank $n$. Let $\fp$ denote a prime of $R$. Since $R$ is Dedekind, the localization $R_{(\fp)}$ is a PID, and we have $M_{(\fp)}\simeq R_{(\fp)}^n\simeq N_{(\fp)}$. Let $\phi_{(\fp)}\colon M_{(\fp)}\to N_{(\fp)}$ be an isomorphism of $R_{(\fp)}$-modules, and let $\wh{\phi}_{(\fp)}$ be the unique extension of $\phi_{(\fp)}$ to an isomorphism $V\to V$. We define the local index at $\fp$ to be the principal fractional ideal of $R_{(\fp)}$ given by \[[M_{(\fp)}:N_{(\fp)}]_{R_{(\fp)}} \defeq \det(\wh{\phi}_{(\fp)})R_{(\fp)}.\] Note that this definition does not depend on our choice of $\phi_{(\fp)}$, since any other isomorphism $M_{(\fp)}\to N_{(\fp)}$ can be written as $\phi_1\phi_{(\fp)}\phi_2$, where $\phi_1\colon M_{(\fp)}\to M_{(\fp)}$ and $\phi_2\colon N_{(\fp)}\to N_{(\fp)}$ are $R_{(\fp)}$-module automorphisms and hence have unit determinants. Then define the \emph{module index} of $N$ in $M$ to be 
		$$[M:N]_R\defeq \bigcap_{\fp}\,[M_{(\fp)}:N_{(\fp)}]_{R_{(\fp)}}.$$
		where the intersection takes place in $K$ and ranges over the primes of $R$. 
	\end{definition}
	\begin{lemma} \label{lem-modinddefine}
		With notation as in Definition~\ref{def: module index}, we have that $[M : N]_R$ is the nonzero fractional ideal of $R$ whose localization at each prime $\fp$ is equal to the local index $[M_{(\fp)}:N_{(\fp)}]_{R_{(\fp)}}$.
	\end{lemma}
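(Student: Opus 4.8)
The plan is to deduce the lemma from the standard local--global description of fractional ideals over a Dedekind domain: a nonzero fractional ideal $I$ of $R$ is determined by the collection of its localizations $I_{(\fp)}$, these localizations are principal fractional ideals of $R_{(\fp)}$ equal to $R_{(\fp)}$ for all but finitely many $\fp$, and one recovers $I$ from this data as the intersection $I=\bigcap_\fp I_{(\fp)}$ taken inside $K$. Thus it suffices to check that the local indices $I_\fp\defeq[M_{(\fp)}:N_{(\fp)}]_{R_{(\fp)}}$ form exactly such a compatible family, and then to identify the resulting ideal with the intersection in Definition~\ref{def: module index}.

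The first step is finiteness: I would show that $I_\fp=R_{(\fp)}$ for all but finitely many primes $\fp$. Because $M$ and $N$ are finitely generated $R$-modules spanning the same $K$-vector space $V=M\otimes_RK=N\otimes_RK$, one may write a finite generating set of $N$ as $K$-linear combinations of a generating set of $M$, and conversely; only finitely many primes divide the denominators appearing in these coefficients. At any prime $\fp$ avoiding this finite set we get $M_{(\fp)}=N_{(\fp)}$, so the identity serves as the isomorphism $\phi_{(\fp)}$, its extension $\wh{\phi}_{(\fp)}$ is the identity on $V$, and hence $\det(\wh{\phi}_{(\fp)})=1$, giving $I_\fp=R_{(\fp)}$.

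The second step is gluing. Each $I_\fp$ is a nonzero principal fractional ideal of the DVR $R_{(\fp)}$, hence of the form $\fp^{n_\fp}R_{(\fp)}$ for the unique integer $n_\fp=\ord_\fp(\det\wh{\phi}_{(\fp)})$, and by the first step $n_\fp=0$ for almost all $\fp$. Therefore the formal product $I\defeq\prod_\fp\fp^{n_\fp}$ is a finite product, defining a nonzero fractional ideal of $R$, and by construction its localization records only the $\fp$-adic exponent, so $I_{(\fp)}=\fp^{n_\fp}R_{(\fp)}=I_\fp$ for every $\fp$. Finally, invoking $I=\bigcap_\fp I_{(\fp)}$ (the fact that a fractional ideal over a Dedekind domain equals the intersection of its localizations at all nonzero primes) gives $I=\bigcap_\fp I_\fp=[M:N]_R$, so $[M:N]_R$ is the asserted nonzero fractional ideal with the prescribed localizations.

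The only step requiring genuine care is the finiteness in the second paragraph---namely that $M$ and $N$ coincide after localizing away from a finite set of primes---but this is an immediate consequence of finite generation together with the equality $M\otimes_RK=N\otimes_RK$. Everything else is a formal application of the structure theory of fractional ideals over a Dedekind domain, so I do not anticipate a substantive obstacle beyond bookkeeping.
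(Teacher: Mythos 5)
Your proof is correct, and it follows the same two-step outline as the paper's proof (almost-everywhere triviality of the local indices, followed by a local--global gluing), but both steps are carried out by genuinely different means. For the finiteness step, the paper leans on the Steinitz machinery it has already developed: it chooses decompositions $M \simeq \fa u \oplus \bigoplus_{i=1}^{n-1} R u_i$ and $N \simeq \fb v \oplus \bigoplus_{i=1}^{n-1} R v_i$ and takes $\phi$ to be the change-of-basis element of $\GL_n(K)$, which restricts to an isomorphism $M_{(\fp)} \to N_{(\fp)}$ at every $\fp \nmid \fa\fb$ and has unit determinant at all but finitely many of those primes. You instead clear denominators in expressions for generators of each module in terms of the other, obtaining the stronger conclusion $M_{(\fp)} = N_{(\fp)}$ (so the local index is $R_{(\fp)}$ on the nose, witnessed by the identity map) away from a finite set of primes; this uses only finite generation and the fact that a nonzero element of $R$ lies in only finitely many primes, avoiding Steinitz's classification entirely, so it is the more elementary route. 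For the gluing step, the paper simply cites Neukirch (proof of Proposition I.12.6), whereas you make the argument self-contained: you assemble the candidate ideal as the finite product $\prod_\fp \fp^{n_\fp}$ with $n_\fp = \ord_\fp(\det \wh{\phi}_{(\fp)})$, check that its localizations are the local indices, and then identify it with the defining intersection via the standard fact that a nonzero fractional ideal of a Dedekind domain equals the intersection of its localizations. Both arguments are sound; the paper's is shorter given its existing setup and willingness to defer to a reference, while yours minimizes prerequisites and makes explicit exactly which local--global facts are in play.
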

	\begin{proof}
		To prove that $[M : N]_R$ is a nonzero fractional ideal of $R$, it suffices to show that $[M_{(\fp)} : N_{(\fp)}]_{R_{(\fp)}} = R_{(\fp)}$ for all but finitely many primes $\fp$, for which it suffices to show that there exists an isomorphism $\phi \colon V \to V$ of $K$-vector spaces such that the map $\phi_{(\fp)}$ obtained by restricting $\phi$ to $M_{(\fp)}$ defines an isomorphism $\phi_{(\fp)} \colon M_{(\fp)} \to N_{(\fp)}$ for all but finitely many primes $\fp$. For this, choose Steinitz decompositions $M \simeq \fa u \oplus \bigoplus_{i = 1}^{n-1} Ru_i$ and $N \simeq \fb v \oplus \bigoplus R v_i$, where $u, u_1, \dots, u_{n-1}$ and $v, v_1, \dots, v_{n-1}$ are two bases of $L$ over $K$.  We can take $\phi$ to be defined by the element of $\on{GL}_n(K)$ sending the basis $u, u_1, \dots, u_{n-1}$ to the basis $v, v_1, \dots, v_{n-1}$, for then the restriction of $\phi$ to every prime $\fp \nmid \fa\fb$ defines an isomorphism $\phi_{(\fp)} \colon M_{(\fp)} \to N_{(\fp)}$, as desired. That the localization of $[M : N]_R$ at $\fp$ is the local index at $\fp$ now follows from, e.g.,~\cite[proof of Proposition~I.12.6]{Neukirch}.
	\end{proof}

	The module index interacts with fractional ideals through the notion of ideal norm. Indeed, we have a map $N_{S/R}\colon \mathcal{I}(S)\to\mathcal{I}(R)$, called the \emph{ideal norm}, taking an ideal $I$ in $ \mathcal{I}(S)$ to $[S:I]_R$ (in the language of Definition~\ref{def: module index}, we think of $S$ and $I$ as being lattices over $R$ in the $K$-vector space $L$). The following lemma shows that a proper invertible integral ideal $I \subsetneq S$ has nontrivial integral ideal norm. 
	\begin{lemma}\label{proper cont upstairs implies proper cont downstairs}
		Let $I \in \mc{I}(S)$ be such that $I\subsetneq S$. Then $N_{S/R}(I) \subsetneq R$. 
	\end{lemma}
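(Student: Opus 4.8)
The plan is to reduce the statement to a local computation of the module index and then exploit the fact that $I$ lies inside $S$. By definition $N_{S/R}(I) = [S:I]_R$, and Lemma~\ref{lem-modinddefine} identifies this with the unique fractional ideal of $R$ whose localization at each prime $\fp$ equals the local index $[S_{(\fp)}:I_{(\fp)}]_{R_{(\fp)}}$. So it suffices to understand each local index and then reassemble the global ideal by intersecting.

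First I would fix a prime $\fp$ and work over the local PID $R_{(\fp)}$. Since $I\subseteq S$, we have $I_{(\fp)}\subseteq S_{(\fp)}$ as a full-rank sublattice of the free module $S_{(\fp)}\simeq R_{(\fp)}^n$. By the elementary divisor theorem, I can choose a basis $e_1,\dots,e_n$ of $S_{(\fp)}$ and nonnegative integers $d_1,\dots,d_n$ such that $\pi^{d_1}e_1,\dots,\pi^{d_n}e_n$ is a basis of $I_{(\fp)}$, where $\pi$ is a uniformizer at $\fp$. The isomorphism $S_{(\fp)}\to I_{(\fp)}$ sending $e_i\mapsto\pi^{d_i}e_i$ has determinant $\pi^{\sum_i d_i}$ up to a unit, so the local index is $[S_{(\fp)}:I_{(\fp)}]_{R_{(\fp)}}=\pi^{\sum_i d_i}R_{(\fp)}$. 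In particular this ideal is contained in $R_{(\fp)}$, and it equals $R_{(\fp)}$ precisely when every $d_i=0$, that is, precisely when $I_{(\fp)}=S_{(\fp)}$.

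Two consequences follow. For integrality: since every local index lies in $R_{(\fp)}$, intersecting over all primes gives $N_{S/R}(I)=\bigcap_\fp [S_{(\fp)}:I_{(\fp)}]_{R_{(\fp)}}\subseteq\bigcap_\fp R_{(\fp)}=R$, so $N_{S/R}(I)$ is an integral ideal. For properness: because a lattice is the intersection of its localizations inside $L$, the hypothesis $I\subsetneq S$ forces $I_{(\fp_0)}\subsetneq S_{(\fp_0)}$ for at least one prime $\fp_0$ (otherwise all localizations agree and $I=S$). At $\fp_0$ the local index is a proper ideal of $R_{(\fp_0)}$, so the localization of $N_{S/R}(I)$ at $\fp_0$ is not all of $R_{(\fp_0)}$; hence $N_{S/R}(I)\neq R$. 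Combining the two, $N_{S/R}(I)\subsetneq R$.

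The only point requiring care---and the closest thing to an obstacle---is the passage from the global strict inclusion $I\subsetneq S$ to the existence of a single prime at which the localized inclusion is strict. This rests on the standard fact that for the finitely generated $R$-modules $I,S\subseteq L$ one has $S=\bigcap_\fp S_{(\fp)}$ and $I=\bigcap_\fp I_{(\fp)}$, so equality of all localizations would force $I=S$. Everything else is the elementary-divisor computation over a local PID, which is routine. I note that invertibility of $I$ is not actually needed for the argument; it is relevant only insofar as it guarantees $I$ is a full-rank lattice and places $N_{S/R}(I)$ in $\mathcal{I}(R)$.
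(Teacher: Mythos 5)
Your proof is correct, but the key step---passing from the global strict inclusion $I\subsetneq S$ to strictness at some localization---is handled by a genuinely different route than the paper's. The paper uses the ring structure of $S$: it chooses a prime $\fP$ of $S$ with $I\subset\fP\subsetneq S$, lets $\fp$ be the prime of $R$ below $\fP$, and uses exactness of localization together with the fact that $S/\fP$ is an integral domain (so $(S/\fP)_{(\fp)}\neq 0$) to conclude $I_{(\fp)}\subsetneq S_{(\fp)}$ at that particular prime. You instead argue purely with $R$-module structure: the elementary divisor theorem over the DVR $R_{(\fp)}$ shows the local index is proper exactly when $I_{(\fp)}\subsetneq S_{(\fp)}$, and the global-to-local step rests on the standard fact that a lattice equals the intersection of its localizations, so equality of all localizations would force $I=S$. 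Your version buys generality and economy: it never uses that $I$ is an ideal of $S$ (any full-rank sublattice of $S$ would do, and, as you note, invertibility plays no role in either proof), and it yields the cleaner local equivalence ``index proper iff localizations differ.'' The paper's version buys precision of a different kind: it exhibits a specific prime of $R$---the one lying below a prime of $S$ containing $I$---at which the norm is forced to be divisible, which is sometimes useful information in its own right. Both arguments are complete and of comparable length; your appeal to $M=\bigcap_\fp M_{(\fp)}$ for torsion-free finitely generated modules over a Dedekind domain is standard and legitimate, so there is no gap.
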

	\begin{proof}
		Recall that \[[S:I]_R=\bigcap_{\fp}\,[S_{(\fp)}:I_{(\fp)}]_{R_{(\fp)}},\] where the intersection runs over all prime ideals $\fp$ of $R$. Note that because $I \subset S$ is integral, we have that $I_{(\fp)} \subset S_{(\fp)}$ is integral for each $\fp$, implying that $[S : I]_R \subset \bigcap_{\fp} R_{(\fp)} = R$ is integral.
		
		Next, since $I$ is properly contained in $ S$, there exists some prime $\fP$ of $S$ such that $I\subset\fP\subsetneq S$. Let $\fp$ denote the prime of $R$ lying under $\fP$, and consider the exact sequence \[\begin{tikzcd}
			0\rar&\fP\rar&S\rar&S/\fP\rar&0.
		\end{tikzcd}\]
		Localization is exact, so localizing away from $\fp$ yields the following exact sequence: \[\begin{tikzcd}
			0\rar&\fP_{(\fp)}\rar&S_{(\fp)}\rar&(S/\fP)_{(\fp)}\rar&0.
		\end{tikzcd}\]
		The quotient $S/\fP$ is an integral domain, so the localization $(S/\fP)_{(\fp)}$ is nonzero. It follows that $\fP_{(\fp)}$ is a proper submodule of $S_{(\fp)}$, and the inclusion $I_{(\fp)}\subset\fP_{(\fp)}$ implies that $I_{(\fp)}$ is properly contained in $S_{(\fp)}$. Thus, $[S_{(\fp)}:I_{(\fp)}]_{R_{(\fp)}}$ is a proper integral ideal of $R_{(\fp)}$, i.e., $[S_{(\fp)}:I_{(\fp)}]_{R_{(\fp)}}\subset\fp R_{(\fp)}$. Thus, $[S:I]_R\subsetneq R$.
	\end{proof}
	
	We may extend the ideal norm $N_{S/R}$ to the zero ideal by setting $N_{S/R}(0)=0$. It is natural to ask how our notion of ideal norm compares to the usual notion of norms on \'{e}tale algebras. Indeed, recall that because $L$ is finite over $K$, we have a map $N_{L/K}\colon L\to K$ taking $\alpha\in L$ to the determinant of the $K$-linear map of multiplication by $\alpha$. The following lemma confirms that the restriction of the ideal norm to principal ideals agrees with the element norm.
	\begin{lemma}\label{ideal norm agrees with element norm}
		We have that $N_{S/R}(\alpha S)=N_{L/K}(\alpha)R$ for every $\alpha\in L$. 
	\end{lemma}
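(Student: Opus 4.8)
The plan is to verify the claimed equality of fractional ideals of $R$ one prime at a time, reducing everything to the definition of $N_{L/K}$ as a determinant. It suffices to treat $\alpha \in L^\times$: if $\alpha$ is a zero divisor in the \'etale algebra $L$ then $N_{L/K}(\alpha)=0$ and $\alpha S$ fails to span $L$, so nothing needs checking beyond the boundary case $\alpha = 0$, where both sides vanish under the convention $N_{S/R}(0)=0$. For $\alpha \in L^\times$, multiplication by $\alpha$ is a $K$-linear automorphism $m_\alpha$ of $L$, so $\alpha S$ is a genuine fractional ideal that spans $L$ over $K$. In particular $S$ and $\alpha S$ are two rank-$n$ lattices in the $K$-vector space $L$, and the module index $[S:\alpha S]_R = N_{S/R}(\alpha S)$ is defined.

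First I would localize. By Lemma~\ref{lem-modinddefine}, the fractional ideal $[S:\alpha S]_R$ is characterized by the property that its localization at each prime $\fp$ of $R$ is the local index $[S_{(\fp)}:(\alpha S)_{(\fp)}]_{R_{(\fp)}}$. Since localization commutes with scalar multiplication, $(\alpha S)_{(\fp)} = \alpha S_{(\fp)}$. Over the PID $R_{(\fp)}$ the lattice $S_{(\fp)}$ is free of rank $n$, and the restriction of $m_\alpha$ to $S_{(\fp)}$ is an $R_{(\fp)}$-module isomorphism onto $\alpha S_{(\fp)}$ (injective because $m_\alpha$ is injective on $L$, and surjective onto $\alpha S_{(\fp)}$ by construction).

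The key step is then the identification of the relevant determinant. In the notation of Definition~\ref{def: module index}, we may choose the local isomorphism $\phi_{(\fp)}$ to be $m_\alpha|_{S_{(\fp)}}$; its unique extension $\wh{\phi}_{(\fp)}$ to an automorphism of $L$ is then $m_\alpha$ itself. Because the local index does not depend on this choice, we obtain
\[ [S_{(\fp)}:(\alpha S)_{(\fp)}]_{R_{(\fp)}} = \det(m_\alpha)\,R_{(\fp)} = N_{L/K}(\alpha)\,R_{(\fp)}, \]
where the last equality is the definition of $N_{L/K}(\alpha)$ as the determinant of multiplication by $\alpha$. Thus for every prime $\fp$ the localization of $N_{S/R}(\alpha S)$ agrees with the localization of $N_{L/K}(\alpha)R$, and since a fractional ideal of $R$ is determined by its localizations, the two ideals coincide.

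I expect essentially no serious obstacle here: the only points requiring care are confirming that $m_\alpha$ restricts to an isomorphism at each localization and invoking Lemma~\ref{lem-modinddefine} to pass from the collection of local indices back to the single global fractional ideal $N_{L/K}(\alpha)R$. The freedom to choose $\phi_{(\fp)}$ arbitrarily (built into Definition~\ref{def: module index}) is exactly what lets us replace an abstract local isomorphism with the concrete map $m_\alpha$, which is what makes the determinant computation immediate.
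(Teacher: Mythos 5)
Your proposal is correct and follows essentially the same argument as the paper: localize at each prime, take multiplication by $\alpha$ as the local isomorphism $S_{(\fp)}\to(\alpha S)_{(\fp)}$ whose extension to $L$ is $m_\alpha$, identify the local index with $\det(m_\alpha)R_{(\fp)}=N_{L/K}(\alpha)R_{(\fp)}$, and glue back to a global equality of fractional ideals. The only cosmetic difference is that you invoke Lemma~\ref{lem-modinddefine} to pass from localizations to the global ideal, while the paper computes the intersection $\bigcap_{\fp}N_{L/K}(\alpha)R_{(\fp)}=N_{L/K}(\alpha)R$ directly; your explicit remark about zero divisors in $L$ is a welcome extra precision the paper leaves implicit.
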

	\begin{proof}
		If $\alpha=0$, then the statement is trivial, so we may assume otherwise. Then
		\begin{align*}
			N_{S/R}(\alpha S)=[S:\alpha S] & =\bigcap_{\fp}\,[S_{(\fp)}:\alpha S_{(\fp)}]_{R_{(\fp)}} \\
			& =\bigcap_{\fp} \left(\det\left(S_{(\fp)}\overset{\times\alpha}{\longrightarrow}S_{(\fp)}\right)\right)R_{(\fp)}  =\left(\det\left(L\overset{\times\alpha}{\longrightarrow}L\right)\right)R=N_{L/K}(\alpha)R,
		\end{align*}
		where the intersections run over all prime ideals $\fp$ of $R$.  
	\end{proof}

	From Lemma~\ref{ideal norm agrees with element norm}, it follows immediately that the restriction of the ideal norm to the invertible ideals of $S$ is a group homomorphism, as we now show:
	\begin{lemma}\label{ideal norm is multiplicative}
		The ideal norm $N_{S/R}\colon \mathcal{I}(S)\to\mathcal{I}(R)$ is a group homomorphism. 
	\end{lemma}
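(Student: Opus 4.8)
The plan is to prove the multiplicativity identity $N_{S/R}(IJ)=N_{S/R}(I)\,N_{S/R}(J)$ for $I,J\in\mathcal{I}(S)$ by reducing to a purely local computation in which both ideals become principal, and then invoking Lemma~\ref{ideal norm agrees with element norm} together with the multiplicativity of the element norm $N_{L/K}$. Since the module index is defined primewise (Definition~\ref{def: module index}) and, by Lemma~\ref{lem-modinddefine}, $N_{S/R}(I)=[S:I]_R$ is the fractional ideal of $R$ whose localization at each $\fp$ is the local index $[S_{(\fp)}:I_{(\fp)}]_{R_{(\fp)}}$, and since localization commutes with products of fractional ideals of $R$, it suffices to check the equality of local indices
\[[S_{(\fp)}:(IJ)_{(\fp)}]_{R_{(\fp)}}=[S_{(\fp)}:I_{(\fp)}]_{R_{(\fp)}}\,[S_{(\fp)}:J_{(\fp)}]_{R_{(\fp)}}\]
at every prime $\fp$ of $R$.

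The key input, and the step I expect to be the main obstacle, is that invertible ideals are locally principal. Here I would argue that because $S$ is finite over $R$, there are only finitely many primes of $S$ lying over $\fp$, so the localization $S_{(\fp)}$ is a semilocal ring and hence has trivial Picard group; an invertible fractional ideal therefore localizes to a free rank-one module. Concretely, this gives $I_{(\fp)}=\alpha S_{(\fp)}$ and $J_{(\fp)}=\beta S_{(\fp)}$ for some $\alpha,\beta\in L^\times$, whence $(IJ)_{(\fp)}=\alpha\beta\, S_{(\fp)}$. Justifying this step carefully—identifying the generators as units of $L$ and confirming that $S_{(\fp)}$ is semilocal—is where the real content lies; everything else is formal.

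Once local principality is in hand, the conclusion is immediate. The local computation carried out in the proof of Lemma~\ref{ideal norm agrees with element norm} shows that $[S_{(\fp)}:\gamma S_{(\fp)}]_{R_{(\fp)}}=N_{L/K}(\gamma)R_{(\fp)}$ for any $\gamma\in L^\times$. Applying this with $\gamma=\alpha$, $\gamma=\beta$, and $\gamma=\alpha\beta$, and then using $N_{L/K}(\alpha\beta)=N_{L/K}(\alpha)N_{L/K}(\beta)$, yields the desired primewise identity and hence the theorem. (Alternatively, one could avoid explicit generators by using the transitivity $[S:IJ]_R=[S:J]_R\,[J:IJ]_R$ of the module index on full-rank lattices together with the observation that multiplication by a local generator of $J_{(\fp)}$ identifies the pair $(S_{(\fp)},I_{(\fp)})$ with $(J_{(\fp)},(IJ)_{(\fp)})$, so that $[J:IJ]_R=[S:I]_R$; but the principal reduction above is the most direct route given Lemma~\ref{ideal norm agrees with element norm}.)
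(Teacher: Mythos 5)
Your proposal is correct and takes essentially the same route as the paper: localize at each prime $\fp$ of $R$, use that invertible fractional ideals of $S_{(\fp)}$ are principal, and then combine the local computation of Lemma~\ref{ideal norm agrees with element norm} with the multiplicativity of $N_{L/K}$. The only difference is cosmetic: the paper cites Neukirch (Proposition I.12.4) for local principality, whereas you justify it directly via the semilocality of $S_{(\fp)}$ and the triviality of its Picard group, which is a perfectly valid substitute.
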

	\begin{proof}
		By~\cite[Proposition~I.12.4]{Neukirch}, every element of $\mathcal{I}({S_{(\fp)}})$ is principal of the form $\alpha S_{(\fp)}$ for some $\alpha\in L^\times$. By Lemma~\ref{ideal norm agrees with element norm}, we know that $N_{S_{(\fp)}/R_{(\fp)}}$ is a group homomorphism, since $N_{L/K}$ is. Now, for $I,J\in\mathcal{I}(S)$, we have that \[N_{S/R}(IJ)=\bigcap_{\fp} N_{S_{(\fp)}/R_{(\fp)}}(I_{(\fp)} J_{(\fp)})=\bigcap_{\fp} N_{S_{(\fp)}/R_{(\fp)}}(I_{(\fp)})N_{S_{(\fp)}/R_{(\fp)}}(J_{(\fp)})=N_{S/R}(I)N_{S/R}(J), \]
		where the last step above follows from Lemma~\ref{lem-modinddefine} and~\cite[Proposition~I.12.6]{Neukirch}.
	\end{proof}
	
	If $S$ is a quadratic $R$-algebra with Steinitz class $\St(S)=[\fa]$, then writing $S=R+\fa\xi$ and $I=\fb_1\beta_1+\fb_2\beta_2$, we claim that \begin{equation}\label{eqn: explicit ideal norm}
		[S:I]_R=\det(g)\fa\inv\fb_1\fb_2,
	\end{equation} where $g$ is the change-of-basis matrix in $\GL_2(K)$ taking $1,\xi$ to $\beta_1,\beta_2$. By the last paragraph of Definition~\ref{def: module index}, it suffices to verify \eqref{eqn: explicit ideal norm} locally. For each prime $\fp$ of $R$, the map $g$ gives us an isomorphism $g\colon S_{(\fp)}\to I_{(\fp)}$ of $R_{(\fp)}$-modules, where \[S_{(\fp)}=R_{(\fp)}+a\xi R_{(\fp)}\quad\textrm{and}\quad I_{(\fp)}=\beta_1b_1R_{(\fp)}+\beta_2b_2R_{(\fp)}\] and $aR_{(\fp)}=\fa_{(\fp)}$ and $b_iR_{(\fp)}=(\fb_{1})_{(\fp)}$ for $i \in \{1,2\}$. We see that \[[S_{(\fp)}:I_{(\fp)}]_{R_{(\fp)}}=\det(g)a\inv b_1b_2 R_{(\fp)},\] from which \eqref{eqn: explicit ideal norm} follows. 
	
	An immediate consequence of \eqref{eqn: explicit ideal norm} is that the ideal norm of an ideal $I$ with $\St(I)=\St(S)$ is principal. It turns out that that this property is characteristic. Indeed, if $[S:I]_R$ is principal, then \eqref{eqn: explicit ideal norm} implies that $\fa$ and $\fb=\fb_1\fb_2$ belong to the same ideal class. That is, $S$ and $I$ have the same Steinitz class. We summarize this crucial property in the following lemma. 
	\begin{lemma}\label{lem: rel tors Steinitz}
		Let $I$ be a fractional ideal of $S$. Then $S$ and $I$ have the same Steinitz class if and only if $[S:I]_R$ is a principal ideal. 
	\end{lemma}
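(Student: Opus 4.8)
The plan is to deduce the lemma directly from the explicit computation of the ideal norm recorded in equation~\eqref{eqn: explicit ideal norm}, so that almost all of the real content is already in hand. First I would fix a Steinitz decomposition $I = \fb_1\beta_1 + \fb_2\beta_2$ of the fractional ideal $I$; by Theorem~\ref{thm: Steinitz} this realizes $\St(I) = [\fb_1\fb_2]$, while writing $S = R + \fa\xi$ gives $\St(S) = [\fa]$. Letting $g \in \GL_2(K)$ be the change-of-basis matrix carrying $1,\xi$ to $\beta_1,\beta_2$, equation~\eqref{eqn: explicit ideal norm} reads
\[
[S:I]_R = \det(g)\,\fa^{-1}\fb_1\fb_2 .
\]

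Next I would observe that since $\det(g) \in K^\times$, the factor $\det(g)R$ is a principal fractional ideal and hence represents the trivial class in $\Cl(R)$. Therefore the ideal class of $[S:I]_R$ coincides with the class of $\fa^{-1}\fb_1\fb_2$. It follows that $[S:I]_R$ is principal if and only if $[\fa^{-1}\fb_1\fb_2] = [R]$ in $\Cl(R)$, which is exactly the condition $[\fa] = [\fb_1\fb_2]$, i.e.\ $\St(S) = \St(I)$. Both implications of the biconditional are obtained simultaneously from this single chain of equivalences.

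There is no substantial obstacle here: the content has been packaged into equation~\eqref{eqn: explicit ideal norm}, and the lemma amounts to the observation that trivializing the $\det(g)$ factor in $\Cl(R)$ reduces the principality of the norm to the equality of Steinitz classes. The only point worth noting is that the equivalence is independent of the chosen bases, which is automatic because $\St(S)$ and $\St(I)$ are basis-independent invariants by Theorem~\ref{thm: Steinitz} while $[S:I]_R$ is defined intrinsically in Definition~\ref{def: module index}; thus the particular representatives $\fa,\fb_1,\fb_2$ and the matrix $g$ entering the computation play no role in the conclusion.
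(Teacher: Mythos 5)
Your proof is correct and follows essentially the same route as the paper: the paper likewise deduces both directions of the lemma directly from equation~\eqref{eqn: explicit ideal norm}, noting that the principal factor $\det(g)R$ is irrelevant in $\Cl(R)$, so that principality of $[S:I]_R$ is equivalent to $[\fa] = [\fb_1\fb_2]$, i.e., to $\St(S) = \St(I)$.
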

	
	\subsection{Orientations} \label{subsec: orientations}

	We will have occasion to treat the Steinitz class of a lattice over $R$ as an ideal of $R$ (rather than as an ideal class), to which end we make the following definition:
	\begin{definition}
		Let $M$ be a rank-$m$ lattice over $R$. For a fractional ideal $\fa$ of $R$ representing $\St(M)$, an $\fa$-\emph{orientation} of $M$ is a choice of isomorphism $\pi \vcentcolon \exterior^m(M)\overset{\sim}\longrightarrow\fa$. For each fractional ideal $\fa$, the set of $\fa$-orientations of $M$ are in (noncanonical) bijection with $R^\times$. Given a decomposition $M=\fa_1u_1\oplus\cdots\oplus\fa_mu_m$, where $\fa_1\cdots\fa_m=\fa$, and given $u\in R^\times$ a unit, we can specify an $\fa$-orientation $\pi$ by setting \[\pi(x_1u_1\wedge\cdots\wedge x_mu_m)=ux_1\cdots x_m.\] An \emph{orientation} of $M$ is an $\fa$-orientation of $M$ for some fractional ideal $\fa$ representing $\St(M)$.
	\end{definition}
	
	In this paper, we are solely concerned with quadratic algebras over $R$. For quadratic algebras, orientations are of particular interest because the data of an orientation is equivalent to distinguishing a certain basis of the algebra. Recall that an algebra $S$ over $R$ has a noncanonical decomposition as $S=R\oplus S/R$, which tells us that $\exterior^n(S)\simeq\exterior^{n-1}(S/R)$. It follows that an orientation on $S$ is equivalent to an orientation on $S/R$. When $S$ is quadratic, $S/R$ is an $R$-module of rank 1. Emulating the procedure outlined in Remark~\ref{rmk: basis for Steinitz decomposition} tells us that $S$ can be written as $S=R+\fa\xi$ for some element $\xi$ of $L$ independent of $1$ and $\fa$ a representative of the Steinitz class of $S$. Decomposing $S$ in this way, however, requires making several choices. Most obvious is the choice of Steinitz class representative, but, once $\fa$ has been chosen, we also have many options for $\xi$, as any $R^\times$-multiple of $\xi$ yields the exact same decomposition. 
	
	Equipping $S$ with an $\fa$-orientation $\pi\colon\exterior^2(S)\to\fa$ solves this issue. The orientation specifies a choice of Steinitz class representative $\fa$ as well as a choice of $\xi$, as we may take $\xi$ to be the element of $L$ such that $S=R+\fa\xi$ and $\pi_K(1\wedge\xi)=1$, where $\pi_K$ denotes the unique extension of $\pi$ to an isomorphism $\pi_K\colon\exterior^2 (L)\to K$. Thus, for any ideal $\fa$ of $R$, fixing an $\fa$-orientation $\pi\colon \exterior^2(S)\to\fa$ of $S$ is equivalent to distinguishing the basis $1,\xi$ of $S$ such that $S=R+\fa\xi$ and $\pi_K(1\wedge\xi)=1$. Note that $\xi$ is unique up to translation by $\fa\inv$. In other words, we may replace $\xi$ by $\xi+a$ for any $a\in\fa\inv$ and end up with the same oriented quadratic ring; any oriented quadratic ring isomorphic to $S$ can be written as $R[\fa\xi]/(\fa^2(\xi'^2-t'\xi'+u'))$ for $\xi'=\xi+a$, where $a$ is an element of $\fa\inv$.

	We further elucidate the relationship between orientations and the ideal norm with the following definition.
	
	\begin{definition}\label{def: oriented norm}
		Let $S$ be a quadratic $R$-algebra oriented by $\pi\colon \exterior^2(S)\to\fa$. Writing $S=R+\fa\xi$, we may suppose that $\pi_K(1\wedge\xi)=1$, where, as before $\pi_K$ denotes the unique extension of $\pi$ to an isomorphism $\pi_K\colon \exterior^2 (L)\to K$. For any ideal $I$ of $S$ with $\St(I)=[\fa]=\St(S)$, write $I=R\alpha+\fa\beta$ for elements $\alpha,\beta\in L$ (which we may do by Remark~\ref{rmk: basis for Steinitz decomposition}). Let $g$ denote the change-of-basis-matrix in $\GL(R\oplus\fa)$ relating $1,\xi$ to $\alpha,\beta$. The \emph{based ideal norm} of the decomposition $I=R\alpha+\fa\beta$ is defined to be $\det(g)$ and is denoted $N(I;\pi,\alpha,\beta)$.
	\end{definition}
	
	This definition is reminiscent of Definition~\ref{def: module index}, and we relate the two in the following. 
	\begin{lemma}\label{ideal norm and steinitz index agree}
		Let $S$ be a quadratic ring extension of $R$, oriented by $\pi\colon \exterior^2(S)\to\fa$. Let $I$ a fractional ideal of $S$ with Steinitz class $[\fa]$. Then, for any Steinitz decomposition of the form $I=R\alpha+\fa\beta$, we have that $[S:I]_R=N(I;\pi,\alpha,\beta) R$.
	\end{lemma}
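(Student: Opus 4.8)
The plan is to read off the statement from the explicit module-index formula \eqref{eqn: explicit ideal norm}, specialized to the decomposition at hand. By Definition~\ref{def: oriented norm}, the based ideal norm is $N(I;\pi,\alpha,\beta)=\det(g)$, where $g$ is the change-of-basis matrix carrying the oriented basis $1,\xi$ of $S$ to the generators $\alpha,\beta$ of $I$. This is exactly the matrix governing \eqref{eqn: explicit ideal norm} in the case $\fb_1=R$, $\fb_2=\fa$, $\beta_1=\alpha$, $\beta_2=\beta$; in particular, although Definition~\ref{def: oriented norm} records $g$ as an element adapted to the pair of Steinitz structures, all that the present computation uses is that $g$ is the underlying change of basis in $\GL_2(K)$ and that $\det(g)$ is computed in the basis $1,\xi$.

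Granting \eqref{eqn: explicit ideal norm}, the proof collapses to a one-line ideal computation: substituting $\fb_1=R$ and $\fb_2=\fa$ and using that $\fa$ is invertible, I obtain
\[
[S:I]_R=\det(g)\,\fa\inv\fb_1\fb_2=\det(g)\,\fa\inv R\,\fa=\det(g)R=N(I;\pi,\alpha,\beta)R,
\]
which is the claim. The entire content is thus the observation that the Steinitz ideals are arranged so that the ambient factor $\fa\inv\fb_1\fb_2$ collapses to $R$, precisely because the decomposition $I=R\alpha+\fa\beta$ was chosen compatibly with the hypothesis $\St(I)=[\fa]=\St(S)$.

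Should one wish to avoid citing \eqref{eqn: explicit ideal norm}, I would argue prime-by-prime directly from Definition~\ref{def: module index}. Fixing a prime $\fp$ and a generator $a$ of $\fa_{(\fp)}$, the localizations $S_{(\fp)}=R_{(\fp)}\cdot 1\oplus R_{(\fp)}\cdot(a\xi)$ and $I_{(\fp)}=R_{(\fp)}\alpha\oplus R_{(\fp)}(a\beta)$ are free of rank two. The $K$-linear endomorphism of $L$ extending $1\mapsto\alpha$, $\xi\mapsto\beta$ is represented by $g$ in the basis $1,\xi$ and carries the basis $1,a\xi$ of $S_{(\fp)}$ onto the basis $\alpha,a\beta$ of $I_{(\fp)}$; hence it restricts to an $R_{(\fp)}$-module isomorphism $S_{(\fp)}\to I_{(\fp)}$ of determinant $\det(g)$, giving $[S_{(\fp)}:I_{(\fp)}]_{R_{(\fp)}}=\det(g)R_{(\fp)}$ for every $\fp$. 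Intersecting over all primes via Lemma~\ref{lem-modinddefine} then yields $[S:I]_R=\det(g)R$. There is no serious obstacle here; the only point requiring care is the matching of conventions, namely confirming that the matrix $g$ of Definition~\ref{def: oriented norm} is literally the change-of-basis matrix feeding \eqref{eqn: explicit ideal norm} and that $\det(g)$ is taken with respect to the oriented basis $1,\xi$. It is also worth recording the intrinsic reformulation $N(I;\pi,\alpha,\beta)=\pi_K(\alpha\wedge\beta)$, which holds because $\alpha\wedge\beta=\det(g)\,(1\wedge\xi)$ in $\exterior^2(L)$ and $\pi_K(1\wedge\xi)=1$; this phrasing makes transparent that the statement is independent of the chosen generators and will be convenient in later arguments.
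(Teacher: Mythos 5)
Your proposal is correct and follows exactly the paper's own argument: the paper likewise proves this lemma by specializing \eqref{eqn: explicit ideal norm} to $\fb_1=R$, $\fb_2=\fa$, so that $\fa\inv\fb_1\fb_2=R$ and $[S:I]_R=\det(g)R=N(I;\pi,\alpha,\beta)R$. Your supplementary prime-by-prime verification simply reproduces the local argument the paper already used to establish \eqref{eqn: explicit ideal norm} itself, so it adds rigor but no new route.
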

	\begin{proof}
		We have specified Steinitz decompositions $S=R+\fa\xi$ and $I=R\alpha+\fa\beta$. Letting $g$ be the change-of-basis matrix sending $1,\xi\mapsto\alpha,\beta$, \eqref{eqn: explicit ideal norm}, we have that \[[S:I]_R=\det(g)\fa\inv\fb_1\fb_2=\det(g)R=N(I;\pi,\beta_1,\beta_2)R,\] as desired.
	\end{proof}

	We reiterate that the based ideal norm depends on the Steinitz decomposition of the ideal $I=R\alpha+\fa\beta$ and the orientation of the ring $S$. Any two such Steinitz decompositions of $I$ are related by an element $h$ of $\GL(R\alpha+\fa\beta)$, whose determinant lies in $R^\times$, and the corresponding element ideal norms differ by $\det(h)$. Thus, the based ideal norms of two distinct such Steinitz decompositions of $I$ are equal if and only if the corresponding Steinitz decompositions can be related by an element of $\SL(R\alpha+\fa\beta)$.

	\subsection{Discriminants}\label{subsec: disc}
	One of the most important invariants of an algebra over $R$ is its \emph{discriminant}. When the characteristic of $R$ is not equal to 2, isomorphism classes of quadratic algebras are determined by their discriminants. There are several possible notions of discriminant, all of which are closely related. For a more complete treatment of this theory, we refer the reader to \cite{CohenBook}.
	
	We begin with a definition that depends on the choice of Steinitz decomposition of $S$. First, recall that we have a trace map $T\colon S\to R$ taking an element $\alpha\in S$ to the trace of the $R$-linear map of multiplication by $\alpha$. This map extends to a trace pairing $T\colon S\otimes_RS\to R$ given by $x\otimes y\mapsto T(xy)$. 
	\begin{definition}\label{def: disc based}
		Let $S$ be an algebra of rank $n$ with Steinitz decomposition $S=\fa_1\xi_1+\cdots+\fa_n\xi_n$. The \emph{based discriminant} of $S$ with respect to the Steinitz decomposition $(\fa_i,\xi_i)_{1\leq i\leq n}$ is defined to be the determinant of the matrix $T(\xi_i\xi_j)$. We denote this based discriminant by $\disc(S;\fa_i,\xi_i)$, which is an element of $(\fa_1\cdots\fa_n)^{-2}$.
	\end{definition}
	
	For example, given a quadratic ring $S=R[\fa\xi]/(\fa^2(\xi^2+t\xi+u))$ with $t\in\fa\inv$ and $u\in\fa^{-2}$, the based discriminant of $S$ with respect to the decomposition $S=R+\fa\xi$ is computed to be $t^2-4u\in\fa^{-2}$. While based discriminants facilitate computation, their drawback is that their definition depends on the choice of Steinitz decomposition of $S$. Thus, we also make the following more equivariant definition, which generalizes Definition~\ref{def: disc based}.
	\begin{definition}
		The \emph{discriminant} of an algebra $S$ of rank $n$ over $R$, denoted $\disc(S)$, is the determinant of the trace pairing $S\otimes_R S\to R$ which we note is an element of $(\exterior^n(S))^{\otimes-2}$, where here we are taking $(\exterior^n(S))^{\otimes-2}$ to denote the second tensor power of the dual of $\exterior^n(S)$ as an $R$-module. We may view the discriminant as a pair $(N,d)$, where $N$ is a locally free $R$-module of rank 1 and $d\in N^{\otimes2}$. Two discriminants $(N,d)$ and $(N',d')$ are said to be isomorphic if there is an isomorphism $N\to N'$ of $R$-modules taking $d\mapsto d'$.
	\end{definition}
	
	Finally, we define the \emph{discriminant ideal} of an algebra $S$ over $R$---a notion that will agree with the usual notion of discriminant ideal from algebraic number theory when $S$ and $R$ are the rings of integers in extensions of global fields. While our initial definition depends on the Steinitz decomposition of $S$, we will see that the discriminant ideal is basis independent.
	\begin{definition}
		Let $S$ be an algebra of rank $n$ with Steinitz decomposition $S=\fa_1\xi_1+\cdots+\fa_n\xi_n$. The \emph{discriminant ideal} of $S$ with respect to the Steinitz decomposition $(\fa_i,\xi_i)_{1\leq i\leq n}$ is defined to be \[\Disc_{S/R}(\fa_i,\xi_i)=\fa_1^2\cdots\fa_n^2\disc(S;\fa_i,\xi_i),\] which we note is an integral ideal of $R$.
	\end{definition}
	\begin{theorem}[{\cite[Corollary~1.4.3]{CohenBook}}]
		Let $S$ be an $R$-algebra of rank $n$ with Steinitz decompositions $(\fa_i,\xi_i)_{1\leq i\leq n}$ and $(\fb_i,\omega_i)_{1\leq i\leq n}$. Then \[\Disc_{S/R}(\fa_i,\xi_i)=\Disc_{S/R}(\fb_i,\omega_i).\] 
	\end{theorem}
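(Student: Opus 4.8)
The plan is to reduce the basis-independence of the discriminant ideal to two ingredients: the quadratic transformation law for the Gram matrix of the trace pairing under a change of basis, and a local-to-global ideal identity relating the Steinitz ideals of the two decompositions through the determinant of the change-of-basis matrix. I would begin by passing to $L = S\otimes_R K$, in which both $(\xi_i)_{1\leq i\leq n}$ and $(\omega_i)_{1\leq i\leq n}$ are $K$-bases, and introduce the change-of-basis matrix $g = (g_{ij})\in\GL_n(K)$ with $\omega_i = \sum_j g_{ij}\xi_j$. Since the trace pairing $T$ extends to a symmetric $K$-bilinear form on $L$, the Gram matrices $M = (T(\xi_i\xi_j))_{ij}$ and $M' = (T(\omega_i\omega_j))_{ij}$ satisfy $M' = gMg^{\mathsf T}$, so that
\[\disc(S;\fb_i,\omega_i) = \det(M') = \det(g)^2\det(M) = \det(g)^2\,\disc(S;\fa_i,\xi_i).\]

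The crux is the ideal identity $(\det g)\,\fb_1\cdots\fb_n = \fa_1\cdots\fa_n$, which I would verify prime-by-prime in the spirit of the proof of Lemma~\ref{lem-modinddefine}. Fix a prime $\fp$; since $R_{(\fp)}$ is a PID, choose generators $a_i$ of $(\fa_i)_{(\fp)}$ and $b_i$ of $(\fb_i)_{(\fp)}$, so that $(a_i\xi_i)_i$ and $(b_i\omega_i)_i$ are both $R_{(\fp)}$-bases of the free module $S_{(\fp)}$. Expanding $b_i\omega_i = \sum_j (b_i g_{ij}a_j^{-1})(a_j\xi_j)$ exhibits the matrix $P = (b_i g_{ij}a_j^{-1})$ carrying the first basis to the second, and as a change of basis between two $R_{(\fp)}$-bases its determinant $\det(g)\prod_i(b_i a_i^{-1})$ lies in $R_{(\fp)}^\times$. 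In ideal terms this says $(\det g)(\fb_1\cdots\fb_n)_{(\fp)} = (\fa_1\cdots\fa_n)_{(\fp)}$, and since a fractional ideal is determined by its localizations, the claimed global identity follows. Squaring it and combining with the Gram-matrix computation yields
\[\Disc_{S/R}(\fb_i,\omega_i) = (\fb_1\cdots\fb_n)^2\,\disc(S;\fb_i,\omega_i) = (\det g)^2(\fb_1\cdots\fb_n)^2\,\disc(S;\fa_i,\xi_i) = (\fa_1\cdots\fa_n)^2\,\disc(S;\fa_i,\xi_i),\]
which is exactly $\Disc_{S/R}(\fa_i,\xi_i)$.

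I expect the main obstacle to be the local-to-global ideal identity in the second step: the Gram-matrix transformation is routine linear algebra, but one must track carefully how the determinant of the change of basis over $K$ interacts with the two families of Steinitz ideals so that the product $(\det g)\,\fb_1\cdots\fb_n$ collapses exactly onto $\fa_1\cdots\fa_n$, rather than merely agreeing up to an ideal class. The key observation that makes this work is that the determinant of a change of basis between two $R_{(\fp)}$-bases of the \emph{same} free module is a unit, which pins down the identity on the nose at every prime; everything else is bookkeeping.
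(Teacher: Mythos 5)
Your proposal is correct, and it is worth noting that the paper itself contains \emph{no} proof of this statement: it is quoted from \cite[Corollary~1.4.3]{CohenBook} and used as a black box, so your argument supplies a self-contained proof where the paper defers to a reference. The two ingredients you isolate are exactly the right ones, and they match the standard argument in the cited source: first, the Gram matrices of the trace pairing on $L = S\otimes_R K$ satisfy $M' = gMg^{\mathsf T}$, so the based discriminants differ by $\det(g)^2$; second, the ``pseudo-basis determinant'' identity $(\det g)\,\fb_1\cdots\fb_n = \fa_1\cdots\fa_n$, which you correctly reduce to the observation that at each prime $\fp$ the matrix $P=(b_i g_{ij} a_j^{-1})$ relates two $R_{(\fp)}$-bases of the \emph{same} free module $S_{(\fp)}$ and hence has unit determinant (it and its inverse both have entries in $R_{(\fp)}$). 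That local step is indeed what makes the identity hold as an equality of fractional ideals rather than merely of ideal classes, and the passage back to a global statement is legitimate because a fractional ideal over a Dedekind domain is determined by its localizations, in the same spirit as the paper's Lemma~\ref{lem-modinddefine}. One small point you handled implicitly but correctly: the elements $\xi_i,\omega_i$ lie in $L$ rather than in $S$, so $T(\xi_i\xi_j)$ must be read via the extension of the trace pairing to a $K$-bilinear form on $L$ (equivalently, compatibility of the trace with base change $R\to K$); since the paper's Definition~\ref{def: disc based} already uses $T$ in this extended sense, no further justification is needed.
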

	
	Since the $\Disc_{S/R}(\fa_i,\xi_i)$ does not depend on the Steinitz decomposition, we call it the \emph{discriminant ideal} of the extension $S/R$ and denote it by $\Disc_{S/R}$. When $S$ and $R$ are rings of integers in an extension of global fields, it is straightforward to verify that the discriminant ideal, as we have defined it, agrees with the usual notion of discriminant ideal from algebraic number theory.
	
	When $S$ is a quadratic algebra, recall that an orientation $\pi\colon\exterior^2(S)\to\fa$ on $S$ is equivalent to a choice of basis $S=R+\fa\xi$ with 1 as the first basis vector. Thus, when we refer to the discriminant of an oriented quadratic algebra $(S,\pi)$, we always mean its based discriminant with respect to the basis specified by the orientation, which we regard as a genuine element of $\fa^{-2}$.

	\subsection{The class group}\label{subsec: class group}

	Let $S$ be an algebra of rank $n$ over $R$. By Lemma~\ref{ideal norm is multiplicative}, we have a group homomorphism $N_{S/R}\colon \mathcal{I}(S)\to\mathcal{I}(R)$; furthermore, by Lemma~\ref{ideal norm agrees with element norm}, this map also descends to a homomorphism \(N_{S/R}\colon \Cl(S)\to\Cl(R).\)
	\begin{definition}
		The kernel of the map $N_{S/R}\colon \Cl(S)\to\Cl(R)$ is called the \emph{relative class group} and is denoted $\Cl(S/R) \defeq \ker (N_{S/R}:\Cl(S)\to\Cl(R))$. 
	\end{definition}
	
	The following result demonstrates how, under certain conditions, the torsion subgroups of the relative class group $\on{Cl}(S/R)$ are related to those of $\on{Cl}(S)$ and $\on{Cl}(R)$. 
	\begin{proposition} \label{prop-classsplits}
		Let $S$ be an algebra of rank $n$ over $R$, and let $\ell \in \mathbb{N}$ be a prime number not dividing $n$. Then we have that
		\[\Cl(S)[\ell]\simeq\Cl(R)[\ell]\oplus\Cl(S/R)[\ell].\] 
	\end{proposition}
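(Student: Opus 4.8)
The plan is to exhibit a splitting of the defining sequence of the relative class group after passing to $\ell$-torsion. Recall that by definition $\Cl(S/R)=\ker\!\big(N_{S/R}\colon\Cl(S)\to\Cl(R)\big)$, so the heart of the matter is to construct a section of $N_{S/R}$ on $\ell$-torsion. To this end, first introduce the \emph{extension-of-ideals} homomorphism
\[
j\colon \Cl(R)\longrightarrow\Cl(S),\qquad [\fc]\longmapsto[\fc S].
\]
This is well-defined: since $R$ is Dedekind, every nonzero fractional ideal $\fc$ is invertible, whence $\fc S\cdot\fc\inv S=(\fc\fc\inv)S=S$ shows $\fc S\in\mathcal{I}(S)$, and principal ideals are visibly sent to principal ideals, so $j$ descends to class groups.

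The key computation is the norm of an extended ideal, namely that $N_{S/R}(\fc S)=\fc^{\,n}$. To verify this I would argue locally. For a prime $\fp$ of $R$, the localization $S_{(\fp)}$ is free of rank $n$ over the PID $R_{(\fp)}$, and writing $\fc_{(\fp)}=cR_{(\fp)}$ we have $\fc S_{(\fp)}=c\,S_{(\fp)}$. The map $S_{(\fp)}\xrightarrow{\,\times c\,}\fc S_{(\fp)}$ extends to multiplication by the scalar $c$ on the $n$-dimensional $K$-vector space $L$, whose determinant is $c^{\,n}$; hence $[S_{(\fp)}:\fc S_{(\fp)}]_{R_{(\fp)}}=c^{\,n}R_{(\fp)}=\fc_{(\fp)}^{\,n}$. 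Intersecting over all $\fp$ via Lemma~\ref{lem-modinddefine} yields $N_{S/R}(\fc S)=\fc^{\,n}$. Consequently the composite $N_{S/R}\circ j\colon\Cl(R)\to\Cl(R)$ is multiplication by $n$.

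Now restrict to $\ell$-torsion. Since the maps above are group homomorphisms, they preserve $\ell$-torsion, and by the definition of $\Cl(S/R)$ the sequence
\[
0\longrightarrow\Cl(S/R)[\ell]\longrightarrow\Cl(S)[\ell]\xrightarrow{\ N_{S/R}\ }\Cl(R)[\ell]
\]
is exact at its first two terms. Because $\ell\nmid n$, the integer $n$ is invertible modulo $\ell$, so multiplication by $n$ is an automorphism of the $\ell$-torsion group $\Cl(R)[\ell]$; let $[n]\inv$ denote its inverse and set $s\defeq j\circ[n]\inv\colon\Cl(R)[\ell]\to\Cl(S)[\ell]$. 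Then $N_{S/R}\circ s=(N_{S/R}\circ j)\circ[n]\inv=[n]\circ[n]\inv=\id$, so $s$ is a section of $N_{S/R}$. In particular $N_{S/R}$ is surjective onto $\Cl(R)[\ell]$, so the displayed sequence is short exact, and the section $s$ splits it, giving $\Cl(S)[\ell]\simeq\Cl(R)[\ell]\oplus\Cl(S/R)[\ell]$.

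The only step with genuine content is the norm formula $N_{S/R}(\fc S)=\fc^{\,n}$, and even that reduces to the determinant of scalar multiplication on an $n$-dimensional space once the module index is unwound locally; the rest is a formal splitting argument powered by the single arithmetic input that $n$ is a unit modulo $\ell$. The only points requiring care are checking that $j$ truly lands in the \emph{invertible} class group $\Cl(S)$ and that the relevant homomorphisms respect $\ell$-torsion, but neither presents a real obstacle.
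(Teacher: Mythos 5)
Your proof is correct and follows essentially the same route as the paper: both define the extension-of-ideals map $\Cl(R)\to\Cl(S)$, verify the norm formula $N_{S/R}(\fc S)=\fc^{\,n}$ by a local computation of the module index, and then split the defining sequence of $\Cl(S/R)[\ell]$ using the invertibility of $n$ modulo $\ell$. The only (cosmetic) difference is where the inverse of multiplication-by-$n$ is inserted: you precompose the section $j$ with $[n]\inv$ so that $N_{S/R}$ itself is the split surjection, whereas the paper uses $\iota$ as the section of the $d$-th power $N_{S/R}^d$ with $dn\equiv1\pmod\ell$.
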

	\begin{proof}
		We start by proving the following lemma about the group homomorphism $\iota\colon\mathcal{I}(R)\to\mathcal{I}(S)$ that sends a fractional ideal $\fa$ to its extension $\fa S$.
		\begin{lemma}\label{norm of ideal of R in S}
			For an ideal $\fa\in\mathcal{I}(R)$, we have $N_{S/R}(\iota(\fa))=N_{S/R}(\fa S)=\fa^n$.
		\end{lemma}
		\begin{proof}[Proof of Lemma~\ref{norm of ideal of R in S}]
			Choose a Steinitz decomposition \(S=\fb\xi\oplus\bigoplus_{i=1}^{n-1}R\xi_i,\) where $\xi,\xi_1,\ldots,\xi_{n-1}$ form a basis of $L$ over $K$. Then \(\fa S=\fa\fb\xi\oplus\bigoplus_{i=1}^{n-1}\fa\xi_i;\) the lemma follows from computing that $[S_{(\fp)}:(\fa S)_{(\fp)}]_{R_{(\fp)}}=\fa_{(\fp)}^n$ for every prime $\fp$ of $R$.  
		\end{proof} 
		
		Now, let $d \in \mathbb{N}$ such that $dn \equiv 1 \pmod \ell$. Composing $\iota$ with the $d^{\mathrm{th}}$ power of the ideal norm gives us  \[\begin{tikzcd}
			\Cl(R)[\ell]\arrow[r,"\iota"] & \Cl(S)[\ell]\arrow[r,"N_{S/R}^d"] &[1em]\Cl(R)[\ell].
		\end{tikzcd}\]
		For convenience, let $N_{S/R,\ell}$ denote the restriction of the norm map to $\Cl(S)[\ell]$, and note that we have $\ker (N_{S/R,\ell}) = \ker (N_{S/R,\ell}^d)$. By Lemma~\ref{norm of ideal of R in S}, the image of an ideal class $[\fa]\in\Cl(R)[\ell]$ of $R$ under these maps is \[[N_{S/R}(\fa S)]^d=[\fa^n]^d=[\fa]^{dn}=[\fa],\] so $\iota$ and $N_{S/R}^d$ are inverses. Thus, we have a splitting of the exact sequence \
		\begin{equation} \label{eq-classsplit}
			\begin{tikzcd}
				1\arrow[r] & \Cl(S/R)[\ell]\arrow[r] & \Cl(S)[\ell]\arrow[r,"N_{S/R}^d"]&\Cl(R)[\ell]\arrow[r]\arrow[l, bend left=33,"\iota", dashed] & 1,
			\end{tikzcd}
		\end{equation}
		where we note that $N_{S/R}^d$ is surjective because $N_{S/R}^d\circ\iota=\id$. It follows that we may write $\Cl(S)[\ell]$ as the direct sum of $\Cl(R)[\ell]$ with $\ker N_{S/R,\ell}^d=\ker N_{S/R,\ell}$. This completes the proof of Proposition~\ref{prop-classsplits}.
	\end{proof}
	Consequently, if we take the $\ell$-torsion subgroup in $\on{Cl}(R)$ as given, determining the $\ell$-torsion subgroup in $\Cl(S)$ amounts to doing the same for $\Cl(S/R)$. In what follows, we will take $n = 2$ and $\ell = 3$ and give a parametrization of the relative $3$-torsion subgroup $\on{Cl}(S/R)[3]$.
	
	\begin{remark}
		Because $n$ is invertible modulo powers of $\ell$, the exact sequence~\eqref{eq-classsplit} still holds when the $\ell$-torsion subgroups are replaced by their $\ell^\infty$-torsion counterparts and when the map $N_{S/R}^d$ is replaced by $N_{S/R}^{1/d}$.
	\end{remark}
	
	We remark briefly about 3-torsion ideals in the ideal group. Let $I$ be a fractional ideal of $S$ such that $I^3=S$. If $S$ is a maximal order, then by unique factorization into products of prime ideals, we have that $I^3=S$ forces $I=S$. In other words, when $S$ is maximal, $\cali(S)[3]=\varnothing$. Surprisingly, this is not always the case; consider the following example.
	\begin{example}({\cite[Example 5]{BV}})
		Let $R=\Z$ and $S=\Z[\sqrt{-11}]$. The ideal $I=(2,(1-\sqrt{-11})/2)$ is a fractional ideal of $S$ with norm 1. Because $I^3\subset S$, it follows (from the fact that the ideal has norm 1) that $I^3=S$, despite the fact that $I\neq S$. Thus, the ideal $I$ has order 3 in both $\cali(S)$ and in $\Cl(S)$.
	\end{example}
	
	Thus, it is possible for there to exist an element of $\Cl(S/R)[3]$ that has order 3 simply because it is equivalent to an ideal that has order 3 in the ideal group! 
	
	Now, suppose that $R$ is the ring of integers of some global field $K$, and further assume that $S$ is an order in some finite extension $L/K$. Let $\cals$ denote the integral closure of $S$ in $L$. We know that $\cals$ is a Dedekind domain with $\Cl(L)\simeq\Cl(\cals)$ (see, e.g., \cite[Proposition~I.12.8]{Neukirch}). Then the class group of $S$ is related to that of $\cals$ via the following canonical exact sequence: \begin{equation}\label{eqn: class gp exact seq max order}
		\begin{tikzcd}
			1\rar&S^\times\rar&\cals^\times\rar&\bigoplus_{\fp}\cals_{(\fp)}^\times/S_{(\fp)}^\times\rar&\Cl(S)\rar&\Cl(\cals)\rar&1,
		\end{tikzcd}
	\end{equation} where the sum runs over the prime ideals $\fp$ of $S$ and where $\cals_{(\fp)}$ denotes the integral closure of $\cals_{(\fp)}$ in $L$. While we do not use \eqref{eqn: class gp exact seq max order} in the sequel, it is fundamental and stated for the sake of completeness. For a proof of \eqref{eqn: class gp exact seq max order}, we refer the reader to \cite[Proposition~I.12.9]{Neukirch}.

	\section{Parametrization of quadratic orders and Gauss composition}\label{sec: param of quad orders and gauss comp}
	
	To prove Theorem~\ref{thm:mainresult}, we require certain background material concerning the relationship between quadratic rings over $R$, ideal classes of such rings, and binary quadratic forms over $R$. The purpose of this section is to introduce this background material. We begin in \S\ref{sec-bqfs} with a discussion of binary quadratic forms over $R$. Then, in \S\ref{sec-paramquad}, we recall the parametrization of quadratic rings by monic binary quadratic forms. While this parametrization is due to O'Dorney~\cite{ODorney}, we have reexpressed it in a way that best suits our desired application. Finally, in \S\ref{subsec: Gauss composition}, we present the parametrization of ideal classes of oriented quadratic extensions of $R$ by binary quadratic forms---i.e., a generalization of Gauss composition that works over Dedekind domains---which is due to Wood~\cite{WoodGaussComp} (cf.~O'Dorney~\cite{ODorney}). For a more complete picture of this beautiful theory, we refer the reader to the respective papers by Bhargava, Wood, and O'Dorney~\cite{BhargavaHCLI,WoodGaussComp,ODorney}. 
	
	\subsection{Binary quadratic forms} \label{sec-bqfs}
	We begin by giving Wood's definition of binary quadratic forms over a Dedekind domain \cite{WoodGaussComp}. While this definition may seem quite abstract initially, it has an explicit interpretation that facilitates computation and agrees with the usual notion of a binary quadratic form over the integers.
	
	\begin{definition}\label{def: BQF}
		A \emph{binary quadratic form} over $R$ is the data of a locally free $R$-module $M$ of rank 2 and an element $f\in\Sym^2(M^*)$. Two binary quadratic forms $(M,f)$ and $(M',f')$ are said to be \emph{isomorphic} if there exists an isomorphism $M\to M'$ taking $f\mapsto f'$. A \emph{linear binary quadratic form} is the data of a locally free $R$-module $M$ of rank 2, a locally free $R$-module $L$ of rank 1, and an element $f\in\Sym^2(M^*)\otimes L$. Writing $M=\fa u\oplus\fb v$ and $L=\mathfrak{c}z$, we may express $f$ as \[f(xu,yv)=px^2z+qxyz+ry^2z,\] where $p\in\fa^{-2}\mathfrak{c}$, $q\in\fa\inv\fb\inv\mathfrak{c}$, and $r\in\fb^{-2}\mathfrak{c}$. Two linear binary quadratic forms $(M,L,f)$ and $(M',L',f')$ are said to be \emph{isomorphic} if there exist isomorphisms $M\to M'$ and $L\to L'$ such that $f\mapsto f'$ under the induced isomorphism $\Sym^2(M^*)\otimes L\to\Sym^2((M')^*)\otimes L'$. The \emph{discriminant} of a linear binary quadratic form $f\in\Sym^2(M^*)\otimes L$ is the element of $(\exterior^2(M)\otimes L)^{\otimes2}$ given by $(q^2-4pr)((u\wedge v)\otimes z)^2$ for Steinitz decompositions $M=\fa u\oplus\fb v$ and $L=\mathfrak{c} z$ as above. If we set $L=\exterior^2(M^*)\otimes\mathfrak{c}$ for some ideal $\mathfrak{c}$ of $R$, then we may regard the discriminant simply as an element in $\mathfrak{c}^{2}$.
		
		For a prime $\fp$ of $R$, suppose that $M_{(\fp)}=R_{(\fp)} x+R_{(\fp)} y$ and $L=R_{(\fp)} z$ so that $f$ is given locally by $ax^2z+bxyz+cy^2z$. We say that $f$ is \emph{primitive} if $a$, $b$, and $c$ generate the unit ideal in $R_{(\fp)}$ for all primes $\fp$ of $R$, i.e., if $aR_{(\fp)}+bR_{(\fp)}+cR_{(\fp)}=R_{(\fp)}$ for all primes $\fp$ of $R$. If $M=\fa x\oplus\fb v$ and $L=\mathfrak{c}z$ with \[f(xu,yv)=px^2z+qxyz+ry^2z,\] then we see that $f$ is primitive if and only if \[p(\fa^{2}\mathfrak{c}\inv)_{(\fp)}+q(\fa\fb \mathfrak{c}\inv)_{(\fp)}+r(\fb^{2}\mathfrak{c}\inv)_{(\fp)}=R_{(\fp)}\] for all primes $\fp$ of $R$. 
	\end{definition}
	
	\begin{remark}\label{rmk: primitive defs are same}
		The definition of primitivity given in Definition~\ref{def: BQF}, which is taken from Wood \cite{WoodGaussComp}, is not at first glance the same as the definition of primitivity used by O'Dorney \cite{ODorney}. However, these definitions are indeed equivalent. For a binary quadratic form $(M,L,f)$, the \emph{image} of $f$ is the smallest rank-1 submodule $N\subset L$ such that $f$ lies in the image of the natural injection $\Sym^2(M^*)\otimes N\hookrightarrow\Sym^2(M^*)\otimes L$. O'Dorney defines $f$ to be primitive if it does not factor through any proper sublattice of $L$, i.e., if the image of $f$ is $L$. If $M=\fa u\oplus\fb v$ and $f(xu,yv)=px^2z+qxyz+ry^2z$, then the image of $f$ is $(p\fa^2+q\fa\fb+r\fb^2)z\subset\mathfrak{c}z$, so $f$ is primitive if and only if \[p\fa^2+q\fa\fb+r\fb^2=\mathfrak{c}.\] The latter condition can be verified locally---$\fa^2+\fa\fb+\fb^2=\mathfrak{c}$ if and only if $\fa_{(\fp)}^2+\fa_{(\fp)}\fb_{(\fp)}+\fb_{(\fp)}^2=\mathfrak{c}_{(\fp)}$ for each prime $\fp$ of $R$. By the above, it follows that O'Dorney's formulation of primitivity is equivalent to that of Wood.
	\end{remark}

	\subsection{The parametrization} \label{sec-paramquad}

	Fix a Dedekind domain $R$ with fraction field $K$, and also fix a fractional ideal $\fa$ of $R$. In this section, we construct a parametrization of quadratic rings over $R$ with Steinitz class $[\fa]$ by certain binary quadratic forms. Recall from \S\ref{subsec: orientations} that, for any quadratic ring $S$ over $R$ with Steinitz class represented by $\fa$, we may write $S = R+\fa\xi$ for some $\xi$ satisfying $\xi^2+a\xi+b = 0$ with $a \in \fa\inv$ and $b \in \fa^{-2}$; here, the choice of $\xi$ is well-defined up to $\xi \mapsto \lambda\xi+u$, where $\lambda \in R^\times$ and $u \in \fa\inv$.

	On the other side of the parametrization is the set of binary quadratic forms $x^2+axy+by^2$ with $a\in\fa\inv$ and $b\in\fa^{-2}$ up to the action of a certain group. Denote the set of all such forms by $W_\fa$, and for any ring $A$, let $W(A)$ denote the $A$-module of forms $x^2+rxy+sy^2$ for $r,s\in A$. The discriminant of a form $x^2+axy+by^2$ is $a^2-4b$.

	Let $G \subset \on{GL}_2$ denote algebraic subgroup of lower-triangular matrices of the form \[\bmat{1&\\\ast&*},\] and recall that there is a natural action of $G$ on $W$ given by \[g\cdot f(x,y)=f((x,y)g)\] for $g\in G$ and $f\in W$. The action of $G$ is given explicitly in coordinates by 
	\begin{equation}\label{eqn: N action coordinates}
		\bmat{1&\\u&\lambda}\cdot(x^2+axy+by^2)=x^2+(a\lambda+2u)xy+(b\lambda^2+ua\lambda+u^2)y^2.
	\end{equation} 
	Let $G_\fa$ denote the lattice in $G(K)$ of matrices with lower-triangular entry in $\fa\inv$ and diagonal entry in $R^\times$, and note that $G_\fa$ preserves the lattice $W_\fa$ in $W(K)$. Note that for $g\in G$ and $f\in W$, we have \[\disc(g\cdot f)=\det(g)^2\disc(f).\]
	
	Having established which objects are in play, we define a based map \[\Psi_\fa\colon \{\textrm{quadratic rings over }R\textrm{ with Steinitz class $[\fa]$}\} \to G_\fa\backslash W_\fa\] as follows. To each ring $S=R+\fa\xi$ with $\xi^2+a\xi+b=0$, let $\Psi_\fa(S)$ be the binary quadratic form $f(x,y)=x^2+axy+by^2$. A computation verifies that changing $\xi$ to $\lambda\xi+u$ for $\lambda\in R^\times$ and $u\in\fa\inv$ corresponds to changing $f$ by the element \[\bmat{1&\\u&\lambda}\in G_\fa,\] so the map is well-defined. 
	
	Conversely, given a form $f(x,y)=x^2+axy+by^2\in W_\fa$, consider the ring \[S=R[\fa\xi]/(\fa^2(\xi^2+a\xi+b)).\] Note that this defines a map \[W_\fa\to\{\textrm{quadratic rings over }R\textrm{ of Steinitz class $[\fa]$}\},\] and that this map sends forms belonging to the same $G_\fa$-equivalence class to the same ring up to isomorphism. The map is observably inverse to $\Psi_\fa$, implying the following theorem.
	\begin{theorem}[{\cite[Theorem 3.3]{ODorney}}]\label{thm: quad ring param}
		For each fractional ideal $\fa$ of $R$, the map $\Psi_\fa$ descends to a bijection \[\{\textrm{quadratic rings over }R\textrm{ with Steinitz class $[\fa]$}\}/\sim \; \to G_\fa\backslash W_\fa.\] Under this bijection, rings that are integral domains correspond precisely to the irreducible orbits, i.e., $G_\fa$-orbits of forms in $W_\fa$ that are irreducible over $K$. Moreover, the discriminant of a binary quadratic form multiplied by $\fa^2$ is the discriminant ideal of the corresponding quadratic ring. For any $f\in W_\fa$, we have that \[\Aut(\Psi_\fa\inv(f))\simeq\Stab_{G_\fa}(f)\simeq\Z/2\Z.\]
	\end{theorem}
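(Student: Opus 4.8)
The plan is to exhibit $\Psi_\fa$ and the reverse construction as mutually inverse bijections by first working at the level of quadratic rings carrying a \emph{distinguished generator} and only afterwards passing to the two quotients. For a form $f=x^2+axy+by^2\in W_\fa$, let $S_f\defeq R[\fa\xi]/(\fa^2(\xi^2+a\xi+b))$, regarded as the $R$-submodule $R+\fa\xi$ of $K[\xi]/(\xi^2+a\xi+b)$. First I would check that $S_f$ is a quadratic ring of Steinitz class $[\fa]$: closure of $R+\fa\xi$ under multiplication amounts to $\fa^2\xi^2\subseteq R+\fa\xi$, and since $\xi^2=-a\xi-b$ this is precisely the membership $a\in\fa\inv$, $b\in\fa^{-2}$ that defines $W_\fa$, while the underlying lattice $R\oplus\fa$ has Steinitz class $[\fa]$. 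With $\xi$ taken as the distinguished generator one has $\Psi_\fa(S_f,\xi)=f$ on the nose, while $S_{\Psi_\fa(S,\xi)}=S$ for any ring-with-generator $(S,\xi)$, both sides being literally $R+\fa\xi$ inside $L=S\otimes_RK$. Thus these two constructions are inverse bijections between rings-with-generator and $W_\fa$, and the theorem reduces to matching the respective quotient operations.

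To descend to $G_\fa\backslash W_\fa$ and to isomorphism classes, I would record two facts. The valid generators of a fixed $S=R+\fa\xi$ are exactly the $\lambda\xi+u$ with $\lambda\in R^\times$ and $u\in\fa\inv$ (as $1,\xi$ are $K$-independent and $R+\fa(\lambda\xi+u)=R+\fa\xi$); substituting such a replacement into $\xi^2+a\xi+b=0$ and comparing the resulting coefficients with \eqref{eqn: N action coordinates} shows that it replaces $\Psi_\fa(S,\xi)$ by $\bmat{1&\\u&\lambda}\cdot\Psi_\fa(S,\xi)$, so the generators of $S$ map bijectively onto the full $G_\fa$-orbit of $\Psi_\fa(S,\xi)$. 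Separately, any $R$-algebra isomorphism $\phi\colon S\to S'$ carries a distinguished generator $\xi$ of $S$ to a valid generator $\phi(\xi)$ of $S'$ (it fixes $R$ and induces an isomorphism $S/R\to S'/R$ of rank-one modules) and preserves minimal polynomials, whence $\Psi_\fa(S,\xi)=\Psi_\fa(S',\phi(\xi))$. Combining these shows $\Psi_\fa$ descends to a well-defined bijection on isomorphism classes. I expect the genuine obstacle to be the bookkeeping here: one must check that every $G_\fa$-equivalence between forms is realized by an honest ring isomorphism and, conversely, that two non-isomorphic rings cannot produce the same orbit, which is exactly where the uniqueness of the decomposition $S=R+\fa\xi$ up to $\xi\mapsto\lambda\xi+u$ (Remark~\ref{rmk: basis for Steinitz decomposition}) does the real work.

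For the refinements, $S_f$ is an integral domain if and only if $L=K[\xi]/(\xi^2+a\xi+b)$ is a field: $S_f\hookrightarrow L$ since $S_f$ is torsion-free, and any relation $z_1z_2=0$ in $L$ with $z_i\neq 0$ rescales by nonzero elements of $R$ (which are units of $L$) to a zero-divisor relation inside $S_f$. As $L$ is a field exactly when $\xi^2+a\xi+b$, equivalently the form $f$, is irreducible over $K$, and since irreducibility is $G_\fa$-invariant, integral domains correspond precisely to the irreducible orbits. The discriminant assertion is then immediate from the computation following Definition~\ref{def: disc based}: the based discriminant of $S_f$ with respect to $S_f=R+\fa\xi$ is $a^2-4b=\disc(f)\in\fa^{-2}$, so the discriminant ideal is $\Disc_{S_f/R}=\fa^2\disc(f)$.

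Finally, for the automorphism count I would argue that $\Aut_R(S_f)$ is identified with the set of distinguished generators of $S_f$ that are themselves roots of $X^2+aX+b$: an $R$-algebra automorphism $\phi$ is determined by $\phi(\xi)$, which must be a valid generator satisfying the same quadratic, and by the preceding paragraph such generators correspond exactly to $\Stab_{G_\fa}(f)$, the identification being compatible with the group laws (both groups being abelian). It then remains to compute this stabilizer from \eqref{eqn: N action coordinates}: the matrix $\bmat{1&\\u&\lambda}$ fixes $f$ iff $a\lambda+2u=a$ and $b\lambda^2+ua\lambda+u^2=b$. When $\on{char}(R)\neq 2$, eliminating $u=a(1-\lambda)/2$ reduces the second equation to $(a^2-4b)(\lambda+1)=0$, so for nondegenerate $f$, where $\disc(f)=a^2-4b\neq 0$, one is forced to $\lambda=-1$, $u=a$; in characteristic $2$ the first equation reads $a(\lambda-1)=0$ and the two solutions $(\lambda,u)\in\{(1,0),(1,a)\}$ again survive. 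In each case the unique nontrivial element is the conjugation involution $\xi\mapsto -a-\xi$, giving $\Aut_R(S_f)\simeq\Stab_{G_\fa}(f)\simeq\Z/2\Z$.
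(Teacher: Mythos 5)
Your proposal is correct, and its core --- the inverse construction $f\mapsto R[\fa\xi]/(\fa^2(\xi^2+a\xi+b))$, the identification of generator changes $\xi\mapsto\lambda\xi+u$ (with $\lambda\in R^\times$, $u\in\fa\inv$) with the $G_\fa$-action, and the based-discriminant computation $\disc(S_f)=a^2-4b$ --- is exactly the paper's route; your ``rings with a distinguished generator'' intermediate step is a cleaner packaging of what the paper sets up just before the theorem, and you also supply the short clearing-denominators argument for the domain/irreducible correspondence that the paper dismisses as immediate. The one place you genuinely diverge is the last claim: the paper obtains $\lambda^2=\det(g)^2=1$ in one line from the covariance $\disc(g\cdot f)=\det(g)^2\disc(f)$ and then solves for $u$, whereas you eliminate $u$ by hand and treat characteristic $2$ separately; your version additionally proves the identification $\Aut(\Psi_\fa\inv(f))\simeq\Stab_{G_\fa}(f)$ (via generators that are roots of the same quadratic), where the paper simply asserts that every quadratic ring has automorphism group $\Z/2\Z$, and it correctly makes explicit the nondegeneracy hypothesis that the paper's phrase ``for any $f\in W_\fa$'' quietly needs --- for instance $f=x^2$ is stabilized by every element with $u=0$ and $\lambda\in R^\times$, so its stabilizer is isomorphic to $R^\times$, not $\Z/2\Z$. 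One algebra slip should be repaired: substituting $u=a(1-\lambda)/2$ into the second stabilizer equation yields $(a^2-4b)(\lambda^2-1)=0$, not $(a^2-4b)(\lambda+1)=0$; as written, your equation would exclude the identity $(\lambda,u)=(1,0)$ from the stabilizer. Since your final sentence correctly lists the identity alongside the unique nontrivial element $(\lambda,u)=(-1,a)$, this is evidently a dropped factor of $(\lambda-1)$ (you described only the nontrivial solutions) rather than a real gap, and the conclusion $\Stab_{G_\fa}(f)\simeq\Z/2\Z$ stands.
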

	\begin{proof}
		We have already proven everything except the last statement. (The correspondence between irreducible forms and integral domains is immediate from the construction of $\Psi_\fa$ and $\Psi_\fa\inv$.) The automorphism group of any quadratic ring is isomorphic to $\Z/2\Z$. If $g=(u,\lambda)\in G_\fa$ stabilizes a form $f(x,y)=x^2+axy+by^2$, then $\lambda^2=\det(g)^2=1$ by considering discriminants. It follows that $\lambda=1$ and $u=0$ or $\lambda=-1$ and $u=a$. A computation shows that $g=(a,-1)$ stabilizes $f$, from which it follows that $\Stab_{G_\fa}(f)\simeq\Z/2\Z$. 
	\end{proof}
	We can completely describe the orbits of the $G$-action on $W$ over archimedean local fields. Compare the following with Lemma~\ref{lem: SL2 transitive action on V}. 
	\begin{lemma}\label{lem: G acts transitively on forms}
		The group $G(\C)$ acts transitively on the set of forms in $W(\C)$ with nonzero discriminant. The group $G(\R)$ acts transitively on the subset of forms in $W(\R)$ with discriminant having a given sign.
	\end{lemma}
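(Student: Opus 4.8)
The plan is to reduce an arbitrary form in $W$ with nonzero discriminant to a fixed normal form by a two-step normalization, exploiting the explicit description of the $G$-action recorded in \eqref{eqn: N action coordinates} together with the transformation law $\disc(g\cdot f)=\det(g)^2\disc(f)=\lambda^2\disc(f)$, where $\lambda$ denotes the diagonal entry of $g$.

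First I would complete the square. Given $f=x^2+axy+by^2$, applying the unipotent element $\bmat{1&\\u&1}$ with $u=-a/2$---which is available because $\on{char}\neq2$ over both $\C$ and $\R$---kills the middle coefficient by \eqref{eqn: N action coordinates}, producing $x^2+b'y^2$ with $b'=b-a^2/4=-\disc(f)/4$. Since $\disc(f)\neq0$, we have $b'\neq0$. Next I would rescale using the diagonal element $\bmat{1&\\0&\lambda}$, which by \eqref{eqn: N action coordinates} sends $x^2+b'y^2$ to $x^2+b'\lambda^2y^2$.

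Over $\C$, the squaring map $\lambda\mapsto\lambda^2$ surjects onto $\C^\times$, so I can choose $\lambda$ with $b'\lambda^2=-1$, bringing every nonzero-discriminant form to the single representative $x^2-y^2$; transitivity of $G(\C)$ follows. Over $\R$, the essential difference is that $\lambda^2$ ranges only over the positive reals, so rescaling preserves the sign of $b'=-\disc(f)/4$. Hence a form with $\disc(f)>0$ has $b'<0$ and normalizes to $x^2-y^2$, while a form with $\disc(f)<0$ has $b'>0$ and normalizes to $x^2+y^2$; in each case every form with discriminant of a given sign reaches a common representative, yielding transitivity of $G(\R)$ on each sign class.

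There is essentially no serious obstacle: the argument is an elementary normal-form computation driven by \eqref{eqn: N action coordinates}. The only points requiring care are that completing the square needs $\on{char}\neq2$ (so the step is special to the archimedean local fields at hand) and that the discriminant is invariant only up to squares of $\det$. This last point is exactly what governs the dichotomy: the $2$-divisibility of $\C^\times$ collapses everything into one orbit, whereas over $\R$ the obstruction measured by $\R^\times/(\R^\times)^2\cong\{\pm1\}$ forces precisely the two orbits distinguished by the sign of the discriminant.
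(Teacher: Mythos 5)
Your proof is correct and follows essentially the same approach as the paper: the paper performs your two normalization steps (completing the square via $u=-a\lambda/2$ and rescaling by $\lambda$) with a single group element $g=(-a\lambda/2,\lambda)$, arriving at the representatives $x^2\mp y^2/4$ rather than your $x^2\mp y^2$. The dichotomy over $\R$ versus $\C$ is handled identically, via the surjectivity (or not) of squaring on the scalar $\lambda$.
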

	\begin{proof}
		Inspired by \eqref{eqn: N action coordinates}, given a form $f(x,y)=x^2+axy+by^2$, we set $2u=-a\lambda$ and act by $g=(-a\lambda/2,\lambda)$. A computation shows that \[g\cdot f(x,y)=x^2-\lambda^2(a^2-4b)y^2/4.\] Over $\C$, the result follows from choosing $\lambda$ to be some appropriate square root of $(a^2-4b)\inv$ that every form $f$ is $G(\C)$-equivalent to $x^2-y^2/4$. Over $\R$, every form of positive discriminant is $G(\R)$-equivalent to $x^2-y^2/4$; every form of negative discriminant is $G(\R)$-equivalent to $x^2+y^2/4$.
	\end{proof}
	
	\subsection{Gauss composition over a Dedekind domain}\label{subsec: Gauss composition}

	We now present the main result of Gauss composition over a Dedekind domain. Let $I=\fb_1\eta_1+\fb_2\eta_2$ be an ideal of $S=R+\fa\xi$, and suppose that \[\xi\eta_1=a\eta_1+c\eta_2\quad\textrm{and}\quad\xi\eta_2=b\eta_1+d\eta_2,\] where $a,d\in\fa\inv$, $b\in\fa\inv\fb_1\fb_2\inv$, and $c\in\fa\inv\fb_1\inv\fb_2$. We can construct a linear binary quadratic form from this data by considering \[f(x,y)=f(x\eta_1+y\eta_2)=cx^2+(d-a)xy-by^2.\] We may regard $f$ as a linear binary quadratic form in $\Sym^2(I^*)\otimes(\fa\inv\exterior^2(I))$ by considering $f(x,y)\eta_1\wedge\eta_2\in\fa\inv\exterior^2(I)$ as a tensor in $\Sym^2(I^*)\otimes\fa\inv\exterior^2(I)$. 
	
	We can free this map from its dependence on the basis of $I$ as follows. Begin by noting that $\fa\xi\subset R$, so $\xi\in\fa\inv$. Consider the map $I\to\fa\inv\exterior^2(I)$ taking $\omega\mapsto\omega\wedge\xi\omega$. We claim that this map is exactly the linear binary quadratic form $f$ found in the above. Indeed, if $\omega=x\eta_1+y\eta_2$, then \[\begin{split}
		f(\omega)&=(x\eta_1+y\eta_2)\wedge(ax\eta_1+cx\eta_2+by\eta_1+dy\eta_2)\\
		&=(cx^2+(d-a)xy-by^2)\eta_1\wedge\eta_2=f(x,y)\eta_1\wedge\eta_2.
	\end{split}\] We can see that $f$ is nonzero by tensoring everything with $K$ and noting that $f$ takes 1 to $1\wedge\xi\neq0$. Thus, we have a map taking a quadratic ring and ideal $(S,I)$ to a linear binary quadratic form in $\Sym^2(I^*)\otimes\fa\inv\exterior^2(I)$.
	
	Note that this map only depends on the ideal class of $I$. Let $I'=\kappa I$ for some $\kappa\in L$. If $I=\fb_1\eta_1+\fb_2\eta_2$, then $I'=\fb_1(\kappa\eta_1)+\fb_2(\kappa\eta_2)$, so the $\xi$-action on $I'$ is exactly the same as its action on $I$. The following tells us that this construction is bijective, discriminant-preserving, and that the invertible ideals correspond precisely to primitive forms. 
	\begin{theorem}[cf.\ {\cite[Theorem~4.2]{ODorney}} and {\cite[Corollary~4.2]{WoodGaussComp}}]\label{Gauss composition over a DD}
		Fix an ideal $\fa$ of $R$. The construction presented above gives a bijection between pairs $(S,I)$, where $S$ is an $\fa$-oriented quadratic ring and $I$ is a fractional ideal of $S$, up to equivalence, and isomorphism classes of pairs $(M,f)$, where $M$ is an $R$-lattice of rank $2$ and $f\in\Sym^2(M^*)\otimes\fa\inv\exterior^2(M)$ is a nonzero linear binary quadratic form. Two pairs $(S,I)$ and $(S',I')$ are said to be equivalent if there is an isomorphism $\phi\colon S\to S'$ (of oriented rings) such that $\phi(I)=\kappa I'$ for some $\kappa\in L$; two pairs $(M,f)$ and $(M',f')$ are said to be isomorphic if there is an isomorphism $\psi\colon M\to M'$ taking $f\mapsto f'$. Under this bijection, if $f$ is a form corresponding to some ring $S$, then the discriminant of $f$ is equal to the discriminant of $S$. Moreover, the invertible ideal classes correspond precisely to primitive forms.
	\end{theorem}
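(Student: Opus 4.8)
The plan is to prove this Gauss composition theorem by exhibiting an explicit inverse construction and then checking the listed compatibilities (discriminants and primitivity). First I would verify that the forward map is well-defined on equivalence classes: the construction $(S,I) \mapsto (I, \omega \mapsto \omega \wedge \xi\omega)$ is already given in basis-free form in the paragraph preceding the statement, and the remark that it depends only on the ideal class of $I$ shows that scaling $I$ by $\kappa \in L^\times$ does not change the induced form in $\Sym^2(I^*) \otimes \fa\inv \extp^2(I)$. I would also need to confirm that a ring isomorphism $\phi \colon S \to S'$ of oriented rings carries the form attached to $I$ to that attached to $\phi(I)$, which follows because $\phi$ intertwines the multiplication-by-$\xi$ maps and the orientation isomorphisms $\pi$, so equivalence classes of pairs $(S,I)$ map to isomorphism classes of pairs $(M,f)$.

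Next I would construct the inverse. Given a lattice $M$ of rank $2$ and a nonzero linear form $f \in \Sym^2(M^*) \otimes \fa\inv\extp^2(M)$, write $M = \fb_1 \eta_1 + \fb_2 \eta_2$ and expand $f$ locally as $cx^2 + (d-a)xy - by^2$ tensored against $\eta_1 \wedge \eta_2$. The point is to recover the ring $S$: I would set $S = R + \fa\xi$ where $\xi$ acts on $M$ by the matrix $\smallbm{a & b \\ c & d}$ reconstructed from the coefficients of $f$ together with a normalization. The subtlety is that $f$ records only $c$, $d-a$, and $b$, not $a$ and $d$ separately; the missing trace datum $a + d$ is exactly the trace $t$ of the quadratic ring, and the orientation $\pi$ on $S$ (equivalently, the choice $S = R + \fa\xi$ with $\pi_K(1 \wedge \xi) = 1$) is what pins this down. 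I would define $S$ via the multiplication law $\xi^2 = t\xi - u$ extracted from the characteristic polynomial of the $\xi$-action, verify that the resulting structure constants land in the correct fractional ideals ($t \in \fa\inv$, $u \in \fa^{-2}$) so that $S$ is a genuine quadratic $R$-algebra with Steinitz class $[\fa]$, and check that $M$ becomes an $S$-module, hence a fractional ideal of $S$ once tensored up to $L$. That the two constructions are mutually inverse is then a direct computation comparing structure constants.

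For the discriminant claim, I would compute directly: the discriminant of the linear binary quadratic form $f = cx^2 + (d-a)xy - by^2$ is $(d-a)^2 + 4bc$, and expanding this gives $(a+d)^2 - 4(ad - bc) = t^2 - 4u$, which by the formula recorded after Definition~\ref{def: disc based} is precisely the based discriminant of $S = R + \fa\xi$ with respect to the orientation. Matching the ideal containers—the form's discriminant lives in $(\extp^2(M) \otimes \fa\inv\extp^2(M))^{\otimes 2}$, which collapses to $\fa^{-2}$—confirms the two agree as elements, not merely up to ideal class.

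The main obstacle I expect is the primitivity statement, namely that invertible ideal classes correspond exactly to primitive forms. Here I would argue locally: by Definition~\ref{def: BQF}, primitivity of $f$ means $c\fb_1^2 + (d-a)\fb_1\fb_2 - b\fb_2^2$ generates the full container ideal at every prime $\fp$, and I must show this is equivalent to $I = M \otimes_R K$-scaled being invertible as an $S$-module. Since $R$ is Dedekind, invertibility can be checked prime by prime, where $S_{(\fp)}$ is either a DVR-like local ring or a local quadratic order; the content ideal of $f$ measures the largest ideal by which one can divide the $\xi$-action, which is exactly the obstruction to $M_{(\fp)}$ being a faithful rank-one $S_{(\fp)}$-module in the sense of being locally principal over $S_{(\fp)}$. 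I would lean on the cited results \cite[Theorem~4.2]{ODorney} and \cite[Corollary~4.2]{WoodGaussComp} to supply the precise local dictionary between the content of $f$ and the multiplier ring of $I$, reducing the claim to the statement that $I$ is invertible if and only if its multiplier ring equals $S$, which in turn is equivalent to $f$ having trivial content, i.e.\ being primitive.
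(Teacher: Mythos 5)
The first thing to note is that the paper offers no proof of this theorem at all: it is stated as a known result, with the forward construction $(S,I)\mapsto(\omega\mapsto\omega\wedge\xi\omega)$ described in the preceding paragraphs and the proof itself outsourced to the citations (O'Dorney, Theorem~4.2, and Wood, Corollary~4.2); indeed, at the end of the proof of Theorem~\ref{thm: the parametrization} the authors again refer to those sources for precisely the primitivity--invertibility equivalence. So there is no internal argument to compare yours against. Your sketch is, in substance, the standard proof from that literature, and it closely parallels what the paper does carry out in the cubic setting: in the proof of Theorem~\ref{thm: the parametrization}, the ring and ideal are rebuilt from the Hessian $px^2+qxy+ry^2$ exactly as you describe, by taking $S=R[\fa\xi]/(\fa^2(\xi^2-q\xi+pr))$ and letting $\xi$ act via \eqref{eqn: xi action}. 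Your discriminant computation $(d-a)^2+4bc=(a+d)^2-4(ad-bc)=t^2-4u$ is correct and matches the based discriminant recorded after Definition~\ref{def: disc based}, and deferring the local dictionary between content and invertibility to the cited results is consistent with the paper's own practice.

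One conceptual point needs repair. You say the missing trace datum $a+d$ is ``pinned down'' by the orientation; it is not. The form determines only $c$, $d-a$, and $b$, and nothing recovers $a+d$: the ambiguity is the simultaneous translation $(a,d)\mapsto(a+\lambda,d+\lambda)$ for $\lambda\in\fa\inv$, which corresponds to replacing $\xi$ by $\xi+\lambda$. This translation leaves the orientation unchanged (since $1\wedge(\xi+\lambda)=1\wedge\xi$) and produces an isomorphic oriented ring, so the inverse map is well defined not because the orientation selects a trace, but because every choice of trace yields the same oriented pair $(S,I)$ up to equivalence; concretely one simply normalizes $a=0$, as the paper does in its cubic argument. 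What the orientation actually accomplishes is to eliminate the remaining unit-scaling freedom $\xi\mapsto\lambda\xi$ with $\lambda\in R^\times$, under which the associated form would rescale by $\lambda$; this is why the bijection is stated for \emph{oriented} rings rather than rings. With that correction, and granting the cited local statement for primitivity, your outline is a faithful reconstruction of the proof the paper chose to omit.
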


	\section{The parametrization}\label{subsec: param}
	
	The purpose of this section is to prove Theorem~\ref{thm:mainresult} and several of its corollaries. In \S\ref{subsec: BCFs and Bhargava Cubes}, we introduce the space of binary cubic forms and its interpretation in terms of Bhargava cubes. We then prove our parametrization in \S\ref{subsec: proof}, and in \S\ref{subsec: reducible param}, we explain how the (ir)reducibility of a binary cubic forms can be  detected in terms of the corresponding ideal class. In \S\ref{subsec: proj}, we conclude the section with a discussion of projective forms.
	
	\subsection{Binary cubic forms}\label{subsec: BCFs and Bhargava Cubes} 
	Henceforth, we impose the additional assumption that $R$ is a Dedekind domain with $\on{char}(R)\neq3$. As in \S\ref{sec-bqfs}, we first define binary cubic forms equivariantly and then transition to using more explicit objects. 
	\begin{definition}\label{def: BCF}
		A \emph{triply symmetric binary cubic form} over $R$ is the data of a locally free $R$-module $M$ of rank 2 and an element $f\in \Sym_3(M^*)$. Two triply symmetric binary cubic forms $(M,f)$ and $(M',f')$ are said to be \emph{isomorphic} if there exists an isomorphism $M\to M'$ taking $f\mapsto f'$. A \emph{linear triply symmetric binary cubic form} is the data of a locally free $R$-module $M$ of rank 2, a locally free $R$-module $L$ of rank 1, and an element $f\in\Sym_3(M^*)\otimes L$. An isomorphism of linear triply symmetric binary cubic forms is defined exactly as in Definition~\ref{def: BQF}.
	\end{definition}
	
	Now, fix an ideal $\fa$ of $R$. Going forward, we will be primarily concerned with linear triply symmetric binary cubic forms $(M,L,f)$ with $\exterior^2(M)\simeq\fa\simeq L$. Call these forms linear triply symmetric binary cubic forms \emph{of type} $\fa$, and note that any form of type $\fa$ is isomorphic to a form with $M=R\oplus\fa$. Henceforth, we will omit the data of $M$ and $L$ when referring to forms of type $\fa$. When no confusion is possible, we will often omit the adjectives ``triply symmetric'' and ``linear'' for the sake of brevity. Writing $M=R\alpha+\fa\beta$ for $\alpha,\beta\in M\otimes_RK$ and $L=\exterior^2(M)=\fa\alpha\wedge\beta$, a linear triply symmetric binary cubic form can be expressed as \[f(x,y)=f(x\alpha+y\beta)=(ax^3+3bx^2y+3cxy^2+dy^3)\alpha\wedge\beta,\] where $a\in\fa$, $b\in R$, $c\in\fa\inv$, and $d\in\fa^{-2}$. 
	
	For an arbitrary ring $A$, let $V(A)$ denote the set of triply symmetric binary cubic forms with coefficients in $A$. Given a fractional ideal $\fa$ of $R$, let $V_\fa$ denote the set of triply symmetric binary cubic forms of type $\fa$, which we note forms a subgroup of $V(K)$.
	
	There is a natural $\GL(M)$-action on the set of binary cubic forms of type $\fa$ given by \[g\cdot f(\zeta)=\det(g)\inv f(\zeta g)\] for $\zeta\in M$ and $g\in\GL(M)$. The resulting form $g\cdot f$ is clearly isomorphic to $f$; it is straightforward to verify that any form $f'$ of type $\fa$ isomorphic to $f$ is in the $\GL(M)$-orbit of $f$. With respect to our choice of basis $M=R\alpha+\fa\beta$, we see that \[\GL(M)\simeq\GL(R\oplus\fa)=\left\{\bmat{r & b\\a&s}\;\Big|\;r,s\in R,a\in\fa\inv,b\in\fa,\textrm{ and }rs-ab\in R^\times \right\},\] since $\GL(R\oplus\fa)$ acts on $M$ on the right. Thus, rather than considering isomorphism classes of triply symmetric binary cubic forms of type $\fa$, in the sequel we will work exclusively with orbits of forms of type $\fa$ under the action of certain subgroups of $\GL(R\oplus\fa)$. Of singular importance is the subgroup of $\GL(R\oplus\fa)$ of matrices with determinant 1: \[\SL(R\oplus\fa)=\{g\in\GL(R\oplus\fa)\mid\det(g)=1\}.\]
	
	To any binary cubic form \[f(x,y)=ax^3+3bx^2y+3cxy^2+dy^3\] of type $\fa$, we may associate two essential covariants, the first of which is called the \emph{discriminant}. The \emph{$($reduced$)$ discriminant} of $f$ is the quartic polynomial in its coefficients given by  \[\disc(f)=-\frac{\mathrm{Disc}(f)}{27}=-3b^2c^2+4ac^3+4b^3d+a^2d^2-6abcd,\] which we note is an element of $\fa^{-2}$. For an element $g\in\GL(R\oplus\fa)$, it is straightforwardly verified that \[\disc(g\cdot f)=\disc(f).\] The other covariant associated to $f$ is a certain binary quadratic form called the \emph{Hessian covariant} of $f$. In coordinates, if \[f(x,y)=ax^3+3bx^2y+3cxy^2+dy^3,\] then corresponding Hessian covariant is given by \[H(x,y)=-\frac{1}{36}\begin{vmatrix}
		f_{xx} & f_{xy}\\ f_{yx} & f_{yy},
	\end{vmatrix}=(b^2-ac)x^2+(ad-bc)xy+(c^2-bd)y^2,\] where we note that $b^2-ac\in R$, $ad-bc\in \fa\inv$, and $c^2-bd\in\fa^{-2}$. In the language of Definition~\ref{def: BQF}, we can view $H$ as a linear binary cubic form $(M,L,H)$ with $\exterior^2(M)=\fa$ and $L=\fa\inv\otimes\exterior^2(M)=R$.
	
	Equivariantly, we can construct the Hessian by viewing it as a \emph{slicing} of the \emph{Bhargava cube} corresponding to $f$. Regard $f$ as a trilinear map $f\colon M^{\otimes 3}\to\fa$, and note that $f\in\Sym_3(M^*)$ implies that $f\colon M^{\otimes3}\to\fa$ is invariant under any permutation of pure tensors in $M^{\otimes 3}$. Choosing any factor of $M$ in $M^{\otimes 3}$, use $f$ to construct a map $M\to\Hom(M^{\otimes2},\fa)\simeq\Hom(M,\fa M^*)$. Applying the determinant map to $\Hom(M,\fa M^*)$ gives us an element of $\Hom(\exterior^2(M),\exterior^2(\fa M^*))\simeq\fa^2\cdot(\exterior^2(M^*))^{\otimes2}$. Because $\exterior^2(M)=\fa$, we see that $\fa^2\cdot(\exterior^2(M^*))^{\otimes2}=R$. In short, we have the following chain of maps \begin{center}
		\begin{tikzcd}
			M\arrow[r,"f"] & \Hom(M,\fa M^*)\arrow[r,"\det"] & \Hom(\exterior^2(M),\fa^2\exterior^2(M^*))\arrow[r,"\sim"] & \fa^2\cdot(\exterior^2(M))^{\otimes 2}=R.
		\end{tikzcd}
	\end{center} In coordinates, we see that the composition of the above maps is simply the Hessian covariant $H$ of $f$.

	\subsection{Proof of Theorem~\ref{thm:mainresult}} \label{subsec: proof}
	
	In his work, O'Dorney showed that triples of ideals in quadratic ring extensions of $R$ are parametrized by Bhargava cubes whose entries lie in certain ideals of $R$ \cite{ODorney}. In the following, we will elicit a parametrization of 3-torsion in the relative class groups of quadratic ring extensions of $R$ by triply symmetric binary cubic forms (i.e., triply symmetric Bhargava cubes) whose entries lie in certain fractional ideals of $R$. 
	
	In this section, $S$ will always denote a quadratic $R$-algebra. Since we are taking $R$ and $K$ to be fixed quantities, our concern is with the \emph{relative} 3-torsion ideals of orders in quadratic extensions of $K$. This is particularly convenient, because $\Cl(S)[3]$ can be written as the direct sum $\Cl(S/R)[3]\oplus\Cl(R)[3]$ (see \S\ref{subsec: class group}), so understanding the relative $3$-torsion in $S$ tells us exactly the structure of the $3$-torsion subgroup of $\Cl(S)$. Thus, we will restrict our focus to the fractional ideals $I$ of $S$ whose ideal norm $[S:I]_R$ is principal, for whih Lemma~\ref{lem: rel tors Steinitz} tells us that $S$ and $I$ have the same Steinitz class. It therefore suffices to exhibit for each fractional ideal $\fa$ of $R$ a parametrization of the relative $3$-torsion ideal classes in $\fa$-oriented quadratic $R$-algebras. To this end, we let $\fa$ be a fixed fractional ideal of $R$ throughout the rest of this section.

	\begin{definition}(cf.\ \cite[Theorem~12]{BES})\label{def: balanced}
		Given an ideal $\fa$ of $R$, a quadruple $(S,I,\delta,s)$ consisting of 
		\begin{enumerate}
			\item a nondegenerate $\fa$-oriented quadratic $R$-algebra $S$ with orientation $\pi\colon \exterior^2(S)\to\fa$;
			\item a fractional ideal $I$ of $S$;
			\item an element $\delta\in (S\otimes_RK)^\times$;
			\item an element $s\in K^\times$;
		\end{enumerate} 
		is said to be $\fa$-\emph{balanced} if $I^3\subset\delta S$, $[S:I]_R=sR$, and $s^3=N(\delta)$. When $\fa$ is understood we repress it from our notation and simply refer to $(S,I,\delta,s)$ as \emph{balanced}. Note that the requirement that $[S:I]_R=sR$ implies that $\St(S)=[\fa]=\St(I)$ by Lemma~\ref{lem: rel tors Steinitz}. Two $\fa$-balanced quadruples $(S,I,\delta,s)$ and $(S',I',\delta',s')$ are said to be \emph{equivalent} if there exists a ring isomorphism $\phi\colon S\to S'$ and $\kappa\in S'\otimes_RK$ such that $I'=\kappa \phi(I)$, $\delta'=\kappa^3\phi(\delta)$, and $s'=N(\kappa)\phi(s)$. We denote this equivalence relation by $(S,I,\delta,s)\sim (S',I',\delta',s')$.
	\end{definition}
	
	For an $\fa$-balanced quadruple $(S,I,\delta,s)$, we remark that $s$ is simply a choice of generator of the principal ideal $[S:I]_R=sR$. Equivalently, by Lemma~\ref{ideal norm and steinitz index agree}, we may think of $s$ as an additional piece of data that restricts the possible choices of Steinitz decompositions of the ideal $I$ in the quadruple. In particular, when working with an $\fa$-balanced quadruple $(S,I,\delta,s)$, we will always choose a Steinitz decomposition $I=R\alpha+\fa\beta$ such that $\pi_K(\alpha\wedge\beta)=s$.

	On one side of our parametrization we have $\fa$-balanced quadruples. On the other side of the bijection we have orbits of binary cubic forms of type $\fa$. Recall that a binary cubic form of type $\fa$ is of the form \[ax^3+3bx^2y+3cxy^2+dy^3,\] where $a\in\fa$, $b\in R$, $c\in\fa\inv$, and $d\in\fa^{-2}$. Denote the set of all such forms by $V_\fa$. Recall from \S\ref{subsec: BCFs and Bhargava Cubes} that there is a natural $\GL(R\oplus\fa)$-action on $V_\fa$ given by \[g\cdot f(x,y)=\det(g)\inv f((x,y)g).\]

	Since, ultimately, we are interested in invertible ideals of $S$, we need to specify the forms that correspond to invertible ideals. Recall from Theorem~\ref{Gauss composition over a DD} the philosophy that quadratic rings over $R$ and ideal classes in those rings correspond to certain orbits of binary quadratic forms, and that invertible ideals give primitive forms. Moreover, recall that to each binary cubic form $f(x,y)=ax^3+3bx^2y+3cxy^2+dy^3\in V_\fa$ we may associate its Hessian covariant, $H(x,y)=(b^2-ac)x^2+(ad-bc)xy+(c^2-bd)y^2.$ Note that $b^2-ac\in R$, $ad-bc\in\fa\inv$, and $c^2-bd\in\fa^{-2}$. 
	\begin{definition}\label{def: projective}
		A binary cubic form in $V_\fa$ is said to be \emph{projective} if its Hessian covariant is primitive. In other words, $f(x,y)=ax^3+3bx^2y+3cxy^2+dy^3$ is not projective if and only if there exists a prime $\fp$ of $R$ dividing the ideals $(b^2-ac)R$, $(ad-bc)\fa$, and $(c^2-bd)\fa^2$.
	\end{definition}
	In our parametrization, we will show that the invertible ideals correspond precisely to projective binary cubic forms.

	Having introduced the objects on both sides of the parametrization, we now move to construct the map. We begin by offering a based map, where we choose a Steinitz decomposition of $I$ and illustrate the correspondence explicitly in this setting. We then explain how to construct this map equivariantly, without reference to an $R$-module basis of $I$. Let $S$ be an $\fa$-oriented quadratic $R$-algebra, where $\pi\colon \exterior^2 (S)\to\fa$ is the $\fa$-orientation. Write $S=R+\fa\xi$, and assume that $\pi(r\wedge a\xi)=ra$. Let $I$ be a fractional ideal of $S$, let $\delta$ be an element of $(S\otimes_RK)^\times$, let $s\in K^\times$, and assume that $(S,I,\delta,s)$ is $\fa$-balanced. Because $I$ has Steinitz class $[\fa]$, recall (from Remark~\ref{rmk: basis for Steinitz decomposition}) that there exist $\alpha,\beta\in S\otimes_RK$ such that $I=R\alpha+\fa\beta$. We may further assume, without loss of generality, that $\pi_K(\alpha\wedge\beta)=s$ (otherwise we can replace $\alpha$ or $\beta$ by a sufficient $K^\times$-multiple). Note that the element ideal norm $N(I;\pi,\alpha,\beta)$ is simply $s$. With this in mind, for each such $S$ we define a map \[\{(I,\delta,s)\mid(S,I,\delta,s)\textrm{ balanced}\}/\sim\;\longrightarrow\SL(R\oplus\fa)\backslash V_{\fa}\] as follows. Since $I^3\subset\delta S$, we may write
	\begin{equation}\label{system of equations}
		\begin{split}
			\alpha^3&=\delta(c_0+a_0\xi)\\
			\alpha^2\beta&=\delta(c_1+a_1\xi)\\
			\alpha\beta^2&=\delta(c_2+a_2\xi)\\
			\beta^3&=\delta(c_3+a_3\xi),
		\end{split}
	\end{equation} where $a_0\in\fa$, $a_1\in R$, $a_2\in\fa\inv$, and $a_3\in\fa^{-2}$. Let \[C(x,y)=a_0x^3+3a_1x^2y+3a_2xy^2+a_3y^3\] be the corresponding triply symmetric binary cubic form in $V_{\fa}$. This construction works for any $\fa$-oriented quadratic ring $S$, giving us a map from the set of equivalence classes of balanced quadruples $(S,I,\delta,s)$ to $V_\fa$. Call this map $\Phi_\fa$. 
	
	While this definition of $\Phi_\fa$ elucidates concretely the objects in play, its main drawback is its basis dependence---note that the above construction implicitly depends on our choice of bases for both $I$ and $S$. Henceforth, we opt for a more equivariant construction, inspired by \cite[Theorem~5.3]{ODorney} and \cite[Theorem~13]{BhargavaHCLI}. Using the orientation isomorphism $\pi\colon \exterior^2(S)\to\fa$, it is readily verified that \[C(x,y)=\pi\left(1\wedge\frac{(\alpha x+\beta y)^3}{\delta}\right),\] where $x$ and $y$ are variables taking values in $R$ and $\fa$, respectively. Thus, we may describe $C$ equivariantly as the map $I\to\fa$ taking $\zeta\mapsto\pi(1\wedge\zeta^3\delta\inv)$. 
	
	From this perspective, we see that changing $I=R\alpha+\fa\beta$ by any element $g$ of $\SL(R\oplus\fa)$ simply changes $C(x,y)$ (via the natural $\GL(R\oplus\fa)$-action on $V_\fa$) by that same element $g$. For the other direction, we see easily that any form in the $\SL(R\oplus\fa)$-equivalence class of $C(x,y)$ can be obtained from the data of $(S,I,\delta)$ simply by changing $I$ by an element of $\SL(R\oplus\fa)$ as needed. Therefore, $\Phi_\fa$ is well-defined. We will prove that $\Phi_\fa$ is a bijection.
	
	\begin{theorem}\label{thm: the parametrization}
		For each ideal $\fa$ of $R$, the map \[\Phi_\fa\colon \{\textrm{$\fa$-balanced quadruples } (S,I,\delta,s)\}/\sim\,\longrightarrow\SL(R\oplus\fa)\backslash V_{\fa}\] is a bijection. Moreover, $\Phi_\fa$ is discriminant-preserving, and under $\Phi_\fa$, the quadruples $(S,I,\delta,s)$ with $I$ invertible correspond precisely to projective forms. 
	\end{theorem}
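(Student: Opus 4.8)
The plan is to prove Theorem~\ref{thm: the parametrization} by constructing an explicit inverse map to $\Phi_\fa$ and verifying that the two compositions are the identity. Given an $\SL(R\oplus\fa)$-orbit of a form $C(x,y)=a_0x^3+3a_1x^2y+3a_2xy^2+a_3y^3\in V_\fa$, I would first recover the quadratic ring $S$ from the Hessian covariant of $C$ via the Gauss-composition correspondence. Concretely, the Hessian $H(x,y)=(a_1^2-a_0a_2)x^2+(a_0a_3-a_1a_2)xy+(a_2^2-a_1a_3)y^2$ is a binary quadratic form of type $\fa$, and its discriminant equals $\disc(C)$ up to a normalizing constant; applying Theorem~\ref{thm: quad ring param} (or rather the multiplication law $\xi^2=t\xi-u$ read off from $H$) produces an $\fa$-oriented quadratic ring $S=R+\fa\xi$ whose based discriminant matches $\disc(C)$. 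The ideal $I$, the element $\delta$, and the scalar $s$ are then reconstructed by reversing the system~\eqref{system of equations}: one sets $I=R\alpha+\fa\beta$ inside $L=S\otimes_RK$ with $\alpha,\beta$ chosen so that the four products $\alpha^3,\alpha^2\beta,\alpha\beta^2,\beta^3$ reduce modulo $\delta S$ to the coefficient pairs $(c_i,a_i\xi)$, with $s$ determined by the orientation constraint $\pi_K(\alpha\wedge\beta)=s$ and $\delta$ by the balancing conditions $s^3=N(\delta)$ and $[S:I]_R=sR$.

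The heart of the argument is to show that this reconstructed $I$ is genuinely a fractional ideal of $S$, i.e.\ that it is closed under multiplication by $\xi$, and that the resulting quadruple is $\fa$-balanced. I would verify the ideal property by expressing $\xi\alpha$ and $\xi\beta$ as $R$-linear (respectively $\fa\inv$-linear) combinations of $\alpha$ and $\beta$ using the equations~\eqref{system of equations} together with the multiplication law on $S$; the requisite integrality of the structure constants is exactly what the type-$\fa$ conditions $a_0\in\fa$, $a_1\in R$, $a_2\in\fa\inv$, $a_3\in\fa^{-2}$ guarantee. The condition $I^3\subset\delta S$ is immediate from the construction, and $s^3=N(\delta)$ follows from taking norms of the relation defining $\delta$ (here the hypothesis $\on{char}(R)\neq3$ is used to ensure the central coefficients are genuinely triplicated and the cube map behaves as expected).

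For well-definedness and bijectivity, I would check that replacing $C$ by $g\cdot C$ for $g\in\SL(R\oplus\fa)$ changes the basis $\alpha,\beta$ of $I$ by the same $g$ (so the equivalence class of $(S,I,\delta,s)$ is unchanged), and conversely that equivalent quadruples yield forms in the same orbit---this is essentially the content of the equivariant description $\zeta\mapsto\pi(1\wedge\zeta^3\delta\inv)$ already established before the theorem statement. The discriminant-preserving claim follows by a direct comparison: $\disc(C)\in\fa^{-2}$ equals the based discriminant of $S$, which by the discussion in \S\ref{subsec: disc} is the invariant attached to the multiplication law extracted from the Hessian. Finally, the projective-versus-invertible correspondence is obtained by combining Definition~\ref{def: projective} with Theorem~\ref{Gauss composition over a DD}: the Hessian of $C$ is precisely the Gauss-composition form attached to the pair $(S,I)$, so $I$ is invertible if and only if that form is primitive, which is the definition of $C$ being projective.

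\textbf{Main obstacle.} I expect the principal difficulty to lie in verifying that the reconstructed module $I$ is closed under the $S$-action and carries the correct Steinitz class simultaneously with the orientation normalization $\pi_K(\alpha\wedge\beta)=s$. The system~\eqref{system of equations} over-determines $\alpha$ and $\beta$, so one must show the four equations are mutually consistent---e.g.\ that $(\alpha^2\beta)\cdot\alpha=(\alpha^3)\cdot\beta$ as elements of $\delta S$---and that this consistency is equivalent to the triply-symmetric (Bhargava-cube) structure of $C$ rather than an arbitrary cubic form. Keeping careful track of the fractional-ideal memberships at each step, and confirming that the scalar $s$ recovered from the norm condition agrees with the module index $[S:I]_R$ via Lemma~\ref{ideal norm and steinitz index agree}, will require the most bookkeeping.
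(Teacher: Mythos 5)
Your proposal follows the same route as the paper's proof: recover $S$ from the Hessian covariant via Gauss composition (Theorem~\ref{Gauss composition over a DD}), reconstruct $I$, $\delta$, and $s$ by inverting the system~\eqref{system of equations}, verify $\fa$-balancedness, and deduce the discriminant and projectivity statements from the corresponding facts about binary quadratic forms. The verification that $I$ is an $S$-module via explicit expressions for $\xi\alpha$ and $\xi\beta$, and the projective-iff-invertible equivalence via primitivity of the Hessian, all match the paper. What you flag as the ``main obstacle'' (consistency of the over-determined system) turns out to be the easy part: the paper resolves it by simply writing down $c_0=a_1p-a_0q$, $c_1=-a_0r$, $c_2=-a_1r$, $c_3=-a_2r+a_3q$, setting $\alpha=c_1+a_1\xi$, $\beta=c_2+a_2\xi$, and checking the relations directly (together with a preliminary reduction to the case where $a_1,a_2$ are not both zero, needed so that $\alpha,\beta$ are $K$-independent).

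The place where your outline is genuinely too thin is injectivity, i.e., that the composition ``reconstruct after $\Phi_\fa$'' is the identity on equivalence classes. You dismiss this as ``essentially the content of the equivariant description,'' but equivariance only gives well-definedness of the two maps in both directions. Given two balanced quadruples $(S,I,\delta,s)$ and $(S',I',\delta',s')$ mapping to the same form $C$, Gauss composition applied to the common Hessian produces only an isomorphism $\phi\colon S\to S'$ and $\kappa$ with $I'=\kappa\phi(I)$; it says nothing about $\delta$ and $s$. The paper must then argue two further points: (i) $\delta'=\kappa^3\phi(\delta)$, which follows from the observation that $\pi\bigl(1\wedge(\delta\inv-\kappa^3(\delta')\inv)\zeta^3\bigr)=0$ for all $\zeta\in I$, so that after extending scalars $(1-\kappa^3(\delta')\inv\delta)x\in K$ for all $x\in S\otimes_RK$, forcing $1-\kappa^3(\delta')\inv\delta=0$; and (ii) $s'=N(\kappa)s$, which a priori holds only up to a unit $u\in R^\times$ with $u^3=1$. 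When $R$ contains nontrivial cube roots of unity, $u\neq1$ is possible, and one must replace $\kappa$ by $u^2\kappa$ and check that this single adjustment fixes $I'$, $\delta'$, and $s'$ simultaneously, yielding $(S,I,\delta,s)\sim(S',I',\delta',s')$. Without these two arguments the bijectivity claim is not established, so this step---not the consistency of the system---is where the real work of the proof lies.
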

	\begin{proof}
		In the above we illustrated how to associate to an $\fa$-balanced quadruple $(S,I,\delta,s)$ an element of $\SL(R\oplus\fa)\backslash V_\fa$. It remains to show that this mapping is a bijection: for a given binary cubic form $C(x,y)$ that there is \emph{exactly one} balanced quadruple $(S,I,\delta,s)$ (up to equivalence) that yields the ($\SL(R\oplus\fa)$-orbit of) $C$ under $\Phi_\fa$. Precisely, we will show how to reconstruct the ring $S$, orientation map $\pi$, ideal $I$, and elements $\delta$ and $s$ from the binary cubic form and then argue that $\Phi_\fa$ is injective.
		
		Philosophically, we have seen in Theorem~\ref{Gauss composition over a DD} that the data of an oriented quadratic ring over $R$ and an ideal class of a quadratic ring should correspond uniquely to some linear binary quadratic form. Thus, starting from a triply symmetric binary cubic form \[C(x,y)=a_0x^3+3a_1x^2y+3a_2xy^2+a_3y^3\in V_\fa,\] we will construct a linear binary quadratic form from which we can construct a ring $S$ and ideal $I$ of $S$. Then we will show that $I$ is balanced for some $\delta$ and $s$. 
		
		As in \S\ref{subsec: BCFs and Bhargava Cubes} of this section, regard $C(x,y)$ as a linear triply symmetric binary cubic form with $M=R\oplus\fa$ and $L=\exterior^2(M)=\fa$. Without loss of generality, we may assume that $a_1$ and $a_2$ are not both 0; otherwise, for $M=R\eta_1+\fa\eta_2$, we can change coordinates by $\eta_2\mapsto\eta_2+t\eta_1$ for some nonzero $t\in\fa\inv$. Recall that there is a natural linear binary quadratic form in $\Sym^2(R\oplus\fa\inv)\otimes\fa$ associated to $C$ called the \emph{Hessian}, which is given in coordinates by \[H(x,y)=(a_1^2-a_0a_2)x^2+(a_1a_2-a_0a_3)xy+(a_2^2-a_1a_3)y^2.\] Note that $a_1^2-a_0a_2\in R$, $a_1a_2-a_0a_3\in\fa\inv$, and $a_2^2-a_1a_3\in\fa^{-2}$. For simplicity, set $p=a_1^2-a_0a_2$, $q=a_1a_2-a_0a_3$, and $r=a_2^2-a_1a_3$. We proceed as in the proof of Theorem~\ref{Gauss composition over a DD} and build a ring $S$ and fractional ideal $I$ of $S$ from the Hessian $H$. Let $S$ be the ring $R[\fa\xi]/(\fa^2(\xi^2-q\xi+pr))$, oriented by $\pi:\exterior^2(S)\to\fa$, $1\wedge\xi\mapsto1$.

		Set \begin{equation}\label{ci definitions}
			\begin{split}
				c_0=a_1p-a_0q;\quad	c_1=-a_0r;\quad c_2=-a_1r;\quad c_3=-a_2r+a_3q. 	
			\end{split}
		\end{equation} Inspired by \eqref{system of equations}, let  \begin{equation}\label{alpha:beta}
			\frac{\alpha}{\beta}=\frac{c_1+a_1\xi}{c_2+a_2\xi},
		\end{equation} which uniquely determines $\alpha$ and $\beta$ up to some scalar factor in $S\otimes_R K$. Fixing a choice of $\alpha$ and $\beta$---for example, setting $\alpha=c_1+a_1\xi$ and $\beta=c_2+a_2\xi$---gives us an $R$-module $I=R\alpha+\fa\beta$. Using \eqref{alpha:beta}, a short calculation shows that \begin{equation}\label{eqn: xi action}
			\xi\alpha=p\beta\quad\textrm{and}\quad\xi\beta=-r\alpha+q\beta,
		\end{equation} from which it follows that $I$ is a sub-$S$-module of $S\otimes_RK$. Showing that $I\otimes_RK=S\otimes_RK$ is a bit subtle and is why we made the stipulation that not both $a_1$ and $a_2$ are 0. To see why $\alpha$ and $\beta$ are independent over $K$, suppose that $\alpha=c\beta$ for some nonzero $c\in K$. Then $\xi\alpha=c\xi\beta=p\beta$, implying that $\beta=0$ or $c\xi=p\in R$. By construction $\xi\not\in K$, so we must have $\alpha=\beta=0$, i.e., $a_1=a_2=c_1=c_2=0$. This is a contradiction. Thus, $I$ is indeed a fractional ideal of $S$. Setting \[\delta=\frac{\alpha^3}{c_0+a_0\xi}=\frac{\alpha^2\beta}{c_1+a_1\xi}=\frac{\alpha\beta^2}{c_2+a_2\xi}=\frac{\beta^3}{c_3+a_3\xi},\] we have that $I^3\subset\delta S$. From this it is clear that changing both $\alpha$ and $\beta$ by $\kappa\in S\otimes_R K$ changes $\delta$ by $\kappa^3$. Finally, set $s=\pi_K(\alpha\wedge\beta)$. In other words, let $s$ be the determinant of the change-of-basis matrix $g$ taking $1,\xi$ to $\alpha,\beta$. We see that changing $\alpha$ and $\beta$ by $\kappa\in S\otimes_RK$ changes $s$ by $N(\kappa)$. To prove that $(S,I,\delta,s)$ is balanced, it remains to show that $s^3=N(\delta)$, which follows from computing $s=\det(g)$ and $N(\delta)$ explicitly.
		
		Thus, given a binary cubic form $C(x,y)$, we have demonstrated how to construct a quadratic ring extension $S$ of $R$, an ideal $I$ of $S$, and elements $\delta\in S\otimes_R K$ and $s\in K^\times$ such that the quadruple $(S,I,\delta,s)$ is balanced. Starting from a form $C(x,y)$, if $(S,I,\delta,s)$ denotes the corresponding balanced quadruple, we see immediately that $\Phi_\fa(S,I,\delta,s)=C(x,y)$, so the map $\Phi_\fa$ is surjective. It remains to show injectivity and that two $\SL(R\oplus\fa)$-equivalent forms yield equivalent quadruples.
		
		Suppose $(S,I,\delta,s)$ and $(S',I',\delta',s')$ are two balanced quadruples whose images under $\Phi_\fa$ yield $\SL(R\oplus\fa)$-equivalent forms $C(x,y)$ and $C'(x,y)=g\cdot C(x,y)$, respectively. By replacing $I$ by $g\inv I$, we may assume without loss of generality that $C=C'$. It follows that the Hessians $H$ and $H'$ associated to $C$ and $C'$ are the same, and proceeding as in the proof of Theorem~\ref{Gauss composition over a DD} tells us that there is an isomorphism $\phi:S\to S'$ and $\kappa\in S'\otimes_RK$ such that $I'=\kappa\phi(I)$. We can replace $S'$ by $S$ and assume that $I'=\kappa I$. For all $\zeta\in I$, we have \[0=\pi(1\wedge\zeta^3\delta\inv)-\pi(1\wedge(\kappa\zeta)^3(\delta')\inv)=\pi(1\wedge(\delta\inv-\kappa^3(\delta')\inv)\zeta^3),\] so $(\delta\inv-\kappa^3(\delta')\inv)\zeta^3\in K$ for every $\zeta\in I$. We also have that $(\delta\inv-\kappa^3(\delta')\inv)x\in K$ for any $x\in I^3\subset\delta S$. Extending scalars to $K$, we see that $(\delta\inv-\kappa^3(\delta')\inv)x\delta\in K$ for every $x$ in $S\otimes_RK$. It follows that $(1-\kappa^3(\delta')\inv\delta)x\in K$ for all $x\in S\otimes_RK$, implying that $1-\kappa^3(\delta')\inv\delta=0$. Thus, $\delta'=\kappa^3\delta$. It remains to show that $s'=N(\kappa)s$. We have that \[s'R=[S:I']_R=[S:\kappa I]_R=N(\kappa)[S:I]_R=N(\kappa)sR,\] so $s'=uN(\kappa)s$ for some unit $u\in R^\times$. The balanced condition implies that $(s')^3=N(\delta')=N(\kappa)^3N(\delta)$, so we have that $u^3N(\kappa)^3s^3=N(\kappa)^3N(\delta)=N(\kappa)^3s^3$. Thus, $u^3=1$, and since $I'=u^2\kappa I$, $\delta'=(u^2\kappa)^3\delta$, and $s'=N(u^2\kappa)s$, we may conclude that $(S',I',\delta',s')\sim(S,I,\delta,s)$. 
		
		To see that the map is discriminant preserving, we simply recall that the discriminant of a binary cubic form is equal to the discriminant of its Hessian covariant. Crucially, this holds because our forms are triply symmetric! Since our construction of the ring $S$ and ideal $I$ was essentially exactly the same as in the proof of Theorem~\ref{Gauss composition over a DD}, we have that \[\disc(C)=\disc(H)=\disc(S),\] so the $\Phi_\fa$ is discriminant-preserving. 
		
		The correspondence between balanced quadruples $(S,I,\delta,s)$ with $I$ invertible and projective binary cubic forms follows from the analogous correspondence between invertible ideals and primitive binary quadratic forms. For a proof of this fact, we refer the reader to the proofs of Theorem~\ref{Gauss composition over a DD} given by O'Dorney \cite[Theorem~4.2]{ODorney} and Wood \cite[Section~2.1]{WoodGaussComp}.
	\end{proof}

	Note that Theorems~\ref{thm: form ideal correspondence} and~\ref{thm: the parametrization} combine to give a parametrization of 3-torsion ideal classes in $\fa$-\emph{oriented} quadratic rings over $R$ by $\SL(R\oplus\fa)$-orbits of $V_\fa$. Unfortunately, counting oriented quadratic rings is not feasible when the group of units of $R$ is infinite, as the $\fa$-orientations are in bijection with $R^\times$. To fix this issue, we instead consider the $\GL(R\oplus\fa)$-orbits of $V_\fa$, which corresponds to considering 3-torsion ideal classes in \emph{unoriented} quadratic rings over $R$ of Steinitz class $[\fa]$. Recall that $\GL(R\oplus\fa)$ acts on $V_\fa$ by \[g\cdot f(x,y)=\det(g)\inv f((x,y)g).\] The determinant map tells us that $\GL(R\oplus\fa)$ and $\SL(R\oplus\fa)$ fit into an exact sequence \[\begin{tikzcd}
		1\rar&\SL(R\oplus\fa)\rar&\GL(R\oplus\fa)\arrow[r,"\det"]& R^\times\rar&1.
	\end{tikzcd}\] This exact sequence splits via the map $R^\times\to \GL(R\oplus\fa)$ taking \[u\mapsto\bmat{u&\\&1}.\] Thus, to elicit an analogue of Theorem~\ref{thm: the parametrization} for $\GL(R\oplus\fa)\backslash V_\fa$, we simply need to compute the orbit of an arbitrary $\fa$-balanced quadruple $(S,I,\delta,s)$ under the action of the image of $R^\times$ in $\GL(R\oplus\fa)$ under the splitting map given above.
	
	Let $(S,I,\delta,s)$ be an $\fa$-balanced quadruple, and suppose that its image under the map $\Phi_\fa$ is the triply symmetric binary cubic form \[f(x,y)=a_0x^3+3a_1x^2y+3a_2xy^2+a_3y^3\in V_\fa,\] so $a_0\in\fa$, $a_1\in R$, $a_2\in\fa\inv$, and $a_3\in\fa^{-2}$. Write $S=R+\fa\xi$. Let $u\in R^\times$ and consider the element \[\bmat{u&\\&1}\in\GL(R\oplus\fa),\] which, by abuse of notation, we also denote using $u$. Under the action of $\GL(R\oplus\fa)$, the forms $f$ and $uf$ are identified, where \[u\cdot f(x,y)=a_0u^2x^3+3a_1ux^2y+3a_2xy^2+a_3u\inv y^3.\] Let $(S',I',\delta',s')$ denote the $\fa$-balanced triple corresponding to $uf$, and set $\xi'=u\inv\xi$. Tracing back through the proof of Theorem~\ref{thm: the parametrization}, we see that $S'=R[\fa\xi']/(\fa^2((\xi')^2-q\xi'+pr))$ with orientation given by $\pi_K'(1\wedge\xi')=u\inv$. Then $\alpha'=u^2(c_1+a_1\xi')$ and $\beta'=u(c_2+a_2\xi')$, so $I'=R\alpha'+\fa\beta'$. The elements $\delta'$ and $s'$ are similarly constructed, with \[\delta'=\frac{u^3(c_1+a_1\xi')}{c_0+a_0\xi'}\] and $s'=\pi_K'(\alpha'\wedge\beta')=u^2(a_2c_1-a_1c_2)=u^2s$. We see that the quadruples $(S,I,\delta,s)$ and $(S',I',\delta',s')$ are \emph{almost} equivalent under $\sim$ by considering the map $\phi\vcentcolon S\to S'$ taking $\xi\mapsto \xi'$ and setting $\kappa=u$. However, the orientations on $S$ and $S'$ are inequivalent, implying that $(S,I,\delta,s)\not\sim(S',I',\delta',s')$. With this in mind, we consider the following stronger equivalence relation on the set of balanced quadruples.
	\begin{definition}
		We say two $\fa$-balanced quadruples $(S,I,\delta,s)$ and $(S',I',\delta',s')$ are \emph{generally equivalent} if there is an isomorphism $\phi\vcentcolon S\to S'$ of unoriented rings and $\kappa\in S'\otimes_RK$ such that $I'=\kappa\phi(I)$, $\delta'=\kappa^3\phi(\delta)$, and $s'=N(\kappa)\phi(s)$. We denote this equivalence relation by  $(S,I,\delta,s)\approx(S',I',\delta',s')$.
	\end{definition}
	
	With this definition in hand, we summarize the passage to $\GL(R\oplus\fa)$-orbits in the following theorem.
	\begin{theorem}\label{thm: GL parametrization}
		The map $\Phi_\fa$ described in Theorem~\ref{thm: the parametrization} induces a discriminant-preserving bijection \[\{\fa\textrm{-balanced quadruples }(S,I,\delta,s)\}/\approx\,\longrightarrow\GL(R\oplus\fa)\backslash V_\fa.\] Under this bijection, the quadruples $(S,I,\delta,s)$ with $I$ invertible correspond precisely to projective forms.
	\end{theorem}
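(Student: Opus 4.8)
The plan is to leverage the bijection of Theorem~\ref{thm: the parametrization} together with the splitting of the determinant sequence displayed above. Since
\[1\to\SL(R\oplus\fa)\to\GL(R\oplus\fa)\xrightarrow{\det}R^\times\to1\]
splits via $u\mapsto\smallbm{u&0\\0&1}$, every $\GL(R\oplus\fa)$-orbit on $V_\fa$ is a union of $\SL(R\oplus\fa)$-orbits, and two such $\SL$-orbits lie in a common $\GL$-orbit if and only if they are interchanged by some diagonal matrix $\smallbm{u&0\\0&1}$ with $u\in R^\times$. Thus $\GL(R\oplus\fa)\backslash V_\fa$ is the quotient of $\SL(R\oplus\fa)\backslash V_\fa$ by the residual action $f\mapsto u\cdot f$, and to prove the theorem it suffices to transport this $R^\times$-action across $\Phi_\fa$ and identify it with the coarsening of $\sim$ into $\approx$.

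First I would invoke the explicit computation carried out immediately before the theorem statement: if the $\fa$-balanced quadruple $(S,I,\delta,s)$ has image $f$ under $\Phi_\fa$, then $u\cdot f$ is the image of a quadruple $(S',I',\delta',s')$ in which $S'$ is the ring $S$ equipped with the orientation scaled by $u$ (equivalently, with generator $\xi$ replaced by $\xi'=u\inv\xi$). That computation exhibits an \emph{unoriented} ring isomorphism $\phi\colon S\to S'$, $\xi\mapsto\xi'$, together with $\kappa=u$, for which $I'=\kappa\phi(I)$, $\delta'=\kappa^3\phi(\delta)$, and $s'=N(\kappa)\phi(s)$; since $N(u)=u^2$ for $u\in R^\times$, this recovers the relations $s'=u^2s$ and $\delta'=u^3\phi(\delta)$ recorded there. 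Hence $(S,I,\delta,s)\approx(S',I',\delta',s')$, whereas in general $(S,I,\delta,s)\not\sim(S',I',\delta',s')$ because the two orientations disagree. Transporting the residual $R^\times$-action through $\Phi_\fa$ therefore sends each $\sim$-class to an $\approx$-equivalent one.

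Next I would verify that the two coarsenings agree exactly, i.e.\ that $(S,I,\delta,s)\approx(S',I',\delta',s')$ holds precisely when their images lie in a common $\GL(R\oplus\fa)$-orbit. The nontrivial direction starts from a general equivalence and recovers a diagonal translation: any unoriented isomorphism $\phi$ realizing $\approx$ fails to respect the orientations by a unique unit $u\in R^\times$, and composing $\phi$ with the reorientation by $u$ turns it into an \emph{oriented} isomorphism. This shows that one quadruple is $\sim$-equivalent to the $u$-reorientation of the other, so by the previous paragraph the corresponding forms differ by $\smallbm{u&0\\0&1}$ and lie in the same $\GL$-orbit; the converse was already established above. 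Combined with the bijectivity of $\Phi_\fa$ from Theorem~\ref{thm: the parametrization}, this yields the asserted bijection on $\approx$-classes.

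Finally, the two remaining assertions require no new work. Discriminant preservation is inherited from Theorem~\ref{thm: the parametrization} because $\disc(g\cdot f)=\disc(f)$ for all $g\in\GL(R\oplus\fa)$, so $\disc$ is constant on $\GL$-orbits and still agrees with the discriminant of the associated ring. The invertibility--projectivity correspondence likewise descends: invertibility of $I$ depends only on the $\approx$-class, while projectivity of $f$ (primitivity of the Hessian) is a $\GL(R\oplus\fa)$-invariant condition, so the equivalence of these two properties passes from the $\SL$-level to the $\GL$-level statement. I expect the main obstacle to be the bookkeeping in the middle step---tracking how an arbitrary unoriented isomorphism factors as an oriented isomorphism followed by a reorientation, and confirming that the unit it produces is exactly the one appearing in the diagonal splitting---but this is precisely what the explicit computation preceding the theorem supplies.
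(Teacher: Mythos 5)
Your proposal is correct and follows essentially the same route as the paper: the paper's justification of this theorem is exactly the discussion preceding its statement --- the split exact sequence $1\to\SL(R\oplus\fa)\to\GL(R\oplus\fa)\xrightarrow{\det}R^\times\to1$ together with the explicit computation of how $\smallbm{u&0\\0&1}$ transforms a balanced quadruple (same unoriented ring, rescaled orientation, $\kappa=u$, $s'=u^2s$, $\delta'=u^3\phi(\delta)$) --- and you invoke precisely that. The one place you go beyond the paper is in spelling out the converse direction (that a general equivalence factors as an oriented isomorphism followed by a reorientation, hence yields a diagonal translation of forms); the paper leaves this implicit, so this is a completion of detail rather than a different approach.
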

	
	Given an $\fa$-balanced quadruple $(S,I,\delta,s)$, Lemmas~\ref{proper cont upstairs implies proper cont downstairs} and~\ref{ideal norm and steinitz index agree} imply that if $I$ is invertible, then $I^3=\delta S$: if $I$ is invertible and $I^3\subsetneq\delta S$, then the expression \[R=N(\delta)\inv s^3R=N(\delta)\inv[S:I]_R^3=[S:\delta\inv I^3]_R\subsetneq R\] is a contradiction, where the third equality in the above follows from Lemma~\ref{ideal norm agrees with element norm}. 
	
	Immediately, we see that the general equivalence classes of balanced quadruples $(S,I,\delta,s)$ with $I$ invertible correspond to relative 3-torsion ideal classes in $\Cl(S/R)[3]$. \begin{theorem}\label{thm: form ideal correspondence}
		Fix a fractional ideal $\fa$ of $R$. Let $S$ be a quadratic $R$-algebra equipped with any orientation. The map \[\{(S,I,\delta,s)\in\{S\}\times\cali(S)\times(S\otimes_RK)^\times\times K^\times\mid (S,I,\delta,s)\textrm{ is $\fa$-balanced}\}/\approx\;\longrightarrow\Cl(S/R)[3]\] given by $(S,I,\delta,s)\mapsto[I]$ is a surjective homomorphism whose kernel is isomorphic to $S_{N=1}^\times/S_{N=1}^{\times3}$, where $S_{N=1}^\times$ denotes the subgroup of units of $S$ with norm 1 and where $S_{N=1}^{\times3}=\{x^3\mid x\in S_{N=1}^\times\}$.\footnote{The natural analogue of this theorem for balanced quadruples under the equivalence relation $\sim$ also holds. In this case, the kernel is isomorphic to $(S_{N=1}^\times/S_{N=1}^{\times3})\times R^\times$, where the copy of $R^\times$ comes from taking into account the possible orientations on $S$.} 
	\end{theorem}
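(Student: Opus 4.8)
The plan is to equip the source with a group structure, check the map is a surjective homomorphism landing in $\Cl(S/R)[3]$, and then pin down the kernel. First I would make the set of $\fa$-balanced quadruples $(S,I,\delta,s)$ with $I$ invertible into a group by componentwise multiplication, $(S,I,\delta,s)\cdot(S,J,\epsilon,t)=(S,IJ,\delta\epsilon,st)$. This stays balanced: $I^3J^3=\delta\epsilon S$ is immediate, $[S:IJ]_R=stR$ follows from multiplicativity of the ideal norm (Lemma~\ref{ideal norm is multiplicative}), and $(st)^3=N(\delta\epsilon)$ from multiplicativity of $N=N_{L/K}$. The identity is $(S,S,1,1)$, and the inverse of $(S,I,\delta,s)$ is $(S,I^{-1},\delta^{-1},s^{-1})$, which is balanced precisely because $I$ is invertible (so $I^{-3}=\delta^{-1}S$). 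One checks this descends to $\approx$. Since products go to $[IJ]=[I][J]$, the assignment $(S,I,\delta,s)\mapsto[I]$ is a homomorphism once well-definedness is in hand, and it lands in $\Cl(S/R)[3]$: invertibility forces $I^3=\delta S$ with equality (the remark preceding Theorem~\ref{thm: form ideal correspondence}, via Lemmas~\ref{proper cont upstairs implies proper cont downstairs} and~\ref{ideal norm and steinitz index agree}), so $[I]^3=1$, while $[S:I]_R=sR$ principal places $[I]\in\ker N_{S/R}=\Cl(S/R)$ by Lemma~\ref{lem: rel tors Steinitz}.

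For surjectivity, given a class in $\Cl(S/R)[3]$ represented by an invertible $I$, I would use $[I]^3=1$ to write $I^3=\delta_0S$ and the relative condition to write $N_{S/R}(I)=s_0R$. Applying $N_{S/R}$ to $I^3=\delta_0S$ and invoking Lemma~\ref{ideal norm agrees with element norm} gives $N(\delta_0)R=s_0^3R$, so $N(\delta_0)=s_0^3w$ for some unit $w\in R^\times$. The crux is to remove $w$: replacing $\delta_0$ by $\delta_0u$ with $u\in S^\times$ and $s_0$ by $s_0r$ with $r\in R^\times$ rescales $w$ by $r^3N(u)^{-1}$, so I must show $w\in N_{L/K}(S^\times)\,(R^\times)^3$. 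One checks $w$ depends only on the class $[I]$ and is multiplicative in it, so $[I]\mapsto w$ is a homomorphism $\Cl(S/R)[3]\to R^\times/(N(S^\times)(R^\times)^3)$; I expect establishing that it vanishes to be a genuine obstacle, handled local-to-globally, since over each $R_{(\fp)}$ the ideal $I_{(\fp)}$ is principal and compatible local generators make $w$ locally trivial.

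To compute the kernel I would observe that a quadruple dies exactly when $I=\lambda S$ is principal; scaling by $\kappa=\lambda^{-1}$ (an instance of $\approx$ with $\phi=\id$) reduces it to one with $I=S$, forcing $\delta=u\in S^\times$, $s=v\in R^\times$, and $N(u)=v^3$. The residual identifications on such pairs are generated by $(u,v)\mapsto(\kappa^3\phi(u),N(\kappa)v)$ with $\kappa\in S^\times$ and $\phi\in\Aut(S)$. I would then define the comparison map to $S_{N=1}^\times/S_{N=1}^{\times3}$ by sending a quadruple to the class of $\delta^2s^{-3}$, which always has norm $1$ since $N(\delta)=s^3$, and which on the kernel equals (a cube of a norm-one unit times) $u/\sigma(u)$, where $\sigma$ is the nontrivial automorphism. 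A direct check shows this is a well-defined homomorphism; since cubing the exponent is invertible modulo $3$, it is onto, and tracing an inverse back to the associated principal quadruple yields $\ker\cong S_{N=1}^\times/S_{N=1}^{\times3}$.

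Finally I would reconcile the orientation data and the role of $\sigma$, which is the delicate point separating the $\approx$ statement from its oriented $(\sim)$ analogue in the footnote. The cleanest route is to prove the oriented version first: there $\phi$ is forced to be the identity, well-definedness of $[I]$ is transparent (as $[\kappa I]=[I]$), and the kernel computation above produces $(S_{N=1}^\times/S_{N=1}^{\times3})\times R^\times$, the extra $R^\times$ recording the choice of orientation. Passing to $\approx$ then forgets the orientation and quotients out exactly this $R^\times$, leaving $S_{N=1}^\times/S_{N=1}^{\times3}$. I expect the main obstacle to be verifying that permitting the orientation-reversing automorphism $\sigma$ introduces no collapse beyond this orientation $R^\times$—equivalently, that $\sigma$, which acts on $\Cl(S/R)$ as inversion and on the norm-one units as $u\mapsto\sigma(u)=u^{-1}$, is accounted for precisely by the oriented-to-unoriented passage rather than by any further identification; this is the step I would treat most carefully.
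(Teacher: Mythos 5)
Your overall skeleton matches the paper's proof: make the balanced quadruples a group under componentwise multiplication, prove surjectivity by adjusting a unit discrepancy between $s^3$ and $N(\delta)$, and compute the kernel by reducing to pairs $(u,v)\in S^\times\times R^\times$ with $N(u)=v^3$ modulo $(u,v)\mapsto(\kappa^3 u, N(\kappa)v)$. However, two of your steps are left open, and they have very different statuses.

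First, surjectivity. The obstruction you pose is vacuous, so no local-to-global argument is needed (or appropriate): your group $R^\times/(N_{L/K}(S^\times)(R^\times)^3)$ is trivial for elementary reasons, since $N_{L/K}(c)=c^2$ for every $c\in R^\times\subset K^\times$, whence $(R^\times)^2\subset N_{L/K}(S^\times)$ and every unit is a square times a cube, $w=w^3(w\inv)^2$. Concretely, this is the paper's one-line fix: if $s^3=uN(\delta)$ with $u\in R^\times$, then $(su\inv)^3=u^{-2}N(\delta)=N(\delta u\inv)$, so $(S,I,\delta u\inv,su\inv)$ is balanced and maps to $[I]$. As written, your surjectivity argument is incomplete, but the repair is immediate and is exactly what the paper does.

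Second, the $\sigma$-issue you flag at the end. Your instinct that this is the dangerous point is exactly right, but your hoped-for resolution---that admitting $\sigma$ ``introduces no collapse beyond the orientation $R^\times$''---cannot be carried out, and the paper's own proof is silent on precisely this point. Under $\approx$ as defined (any ring isomorphism $\phi$), take $\phi=\sigma$ and $\kappa=1$: then $(S,I,\delta,s)\approx(S,\sigma(I),\sigma(\delta),s)$, and the latter quadruple is again balanced because $[S:\sigma(I)]_R=[S:I]_R$ and $N(\sigma(\delta))=N(\delta)$. But $I\sigma(I)=N_{S/R}(I)S=sS$ is principal, so $[\sigma(I)]=[I]\inv$ in $\Cl(S/R)$. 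Hence the assignment $(S,I,\delta,s)\mapsto[I]$ sends $\approx$-equivalent quadruples to mutually inverse classes, and the kernel analysis must additionally quotient by $u\mapsto\sigma(u)=u\inv$. What the paper's displayed argument actually establishes (its well-definedness remark and its kernel computation only ever invoke $\kappa$-rescaling, i.e.\ $\phi=\id$) is the statement for the finer, orientation-preserving equivalence $\sim$---the ``oriented version'' you propose to prove first, which does go through as you describe. The passage from $\sim$ to $\approx$ is where both your proposal and the paper's written proof stop: already for $R=\Z$ and $S$ the maximal order of discriminant $-23$, one has $\Cl(S/R)[3]\simeq\Z/3\Z$ and trivial $S_{N=1}^\times/S_{N=1}^{\times3}$, so there are three $\sim$-classes, but the $\sigma$-twist fixes the one over the identity and swaps the other two, leaving only two $\approx$-classes---incompatible with a surjection onto $\Cl(S/R)[3]$ with trivial kernel. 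So the step you said you would ``treat most carefully'' is not merely delicate: as stated it fails whenever $\Cl(S/R)[3]$ is nontrivial, and any correct completion has to modify the statement (work with $\sim$, or with $\Cl(S/R)[3]$ modulo inversion) rather than just supply a missing argument.
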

	\begin{proof}
		Firstly, we note that the map is well-defined because $I$ being invertible forces $I^3=\delta S$. Moreover, we already have that $I\in\Cl(S/R)$ (since $[S:I]_R$ is principal) so indeed $[I]\in\Cl(S/R)[3]$. 
		
		Given $[I]\in\Cl(S/R)[3]$, we have that $I^3=\delta S$ for some $\delta\in (S\otimes_RK)^\times$. Applying the ideal norm, we see that $[S:I]_R^3=N(\delta)R$. Let $\pi:\exterior^2(S)\to\fa$ be an orientation of $S$. Choosing any Steinitz decomposition $I=R\alpha+\fa\beta$ of $I$, set $s=\pi_K(\alpha\wedge\beta)$, and note that $[S:I]_R=sR$. Thus, we have that $s^3=uN(\delta)$ for some $u\in R^\times$, which we may rewrite as \[(su\inv)^3=N(\delta u\inv).\] It follows that $(S,I,\delta u\inv,su\inv)$ is a balanced quadruple corresponding whose image under the forgetful map is $[I]$. Any other representative $I'$ of the same ideal class $[I]$ produces a generally equivalent triple: if $I'=\kappa I$, then we see that $(S,I,\delta,s)\approx(S,I',\kappa^3\delta,N(\kappa)s)$. Therefore, the map is indeed surjective.
		
		Finally, suppose that $(S,I,\delta,s)$ is such that $[I]$ is the trivial class. Then $I=\kappa S$ for some $\kappa\in(S\otimes_RK)^\times$, and we have that $\kappa^3S=\delta S$ and $s^3=N(\delta)$. We see that $(S,S,\delta/\kappa^3,s/N(\kappa))\approx (S,I,\delta,s)$, where $\delta/\kappa^3\in S^\times$ and $s/N(\kappa)\in R^\times$. 
		
		Thus, we may count the general equivalence classes of balanced quadruples whose image in $\Cl(S/R)[3]$ is trivial by counting equivalence classes of pairs $(u,v)\in S^\times\times R^\times$ such that $v^3=N(u)$, where two pairs $(u,v)$ and $(u',v')$ are equivalent if there exists $\kappa\in S^\times$ such that $u'=\kappa^3u$ and $v'=N(\kappa)v$. Immediately, we see that each pair $(u,v)$ is equivalent to $(u',1)$ for some $u'\in S^\times$ by setting $\kappa=vu\inv$. The theorem follows from noting that two pairs $(u,1)$ and $(u',1)$ are equivalent if and only if there exists some $\kappa\in S_{N=1}^\times$ such that $u'=\kappa^3 u$.
	\end{proof}

	We conclude by demonstrating how to correct for the $|S_{N=1}^\times/S_{N=1}^{\times3}|$-fold overcount when $R$ is the ring of integers of a number field $K$. Doing so will imply Corollary~\ref{cor: orbit corr}. Let $K$ be a number field, and let $R$ denote its ring of integers. Suppose that $K$ has $r_1$ real places and $r_2$ complex places so that $[K:\Q]=r_1+2r_2$. Let $S$ be an order in a quadratic extension $L$ of $K$. Let $S_{N=1}^\times$ denote the subgroup of units of $S$ with norm 1, and let $S_{N=1}^{\times3}=\{x^3\mid x\in S_{N=1}^\times\}$. We first show that it suffices to compute $|S^\times/S^{\times3}|$.

	\begin{prop}
		\label{prop: units mod cubes in terms of other stuff}
		Let $S$ be an order in a quadratic extension $L$ of $K$. Then 
		\[ |S_{N=1}^\times/S_{N=1}^{\times3}|=\frac{|S^\times/S^{\times3}|}{|R^\times/R^{\times3}|} \]
	\end{prop}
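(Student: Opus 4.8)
The plan is to analyze the norm homomorphism $N=N_{L/K}\colon S^\times\to R^\times$, whose kernel is $S_{N=1}^\times$ by definition, and to track how the cubing endomorphism $x\mapsto x^3$ interacts with it via kernel--cokernel (snake) exact sequences. Write $I\defeq\im(N)\subseteq R^\times$ and $C\defeq R^\times/I$. The key preliminary observation is that $C$ is a finite $2$-group: since $R^\times\subseteq S^\times$ and $N_{L/K}(r)=r^2$ for $r\in R^\times$, we have $R^{\times 2}\subseteq I$, so $C$ is a quotient of the finite elementary abelian $2$-group $R^\times/R^{\times 2}$. Consequently cubing is an isomorphism on $C$, i.e.\ $C[3]=C/C^3=1$, and $C$ will drop out of the $3$-primary bookkeeping.

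First I would feed the short exact sequence $1\to I\to R^\times\to C\to 1$ into the snake lemma for multiplication by $3$. Because $C[3]=C/C^3=1$, the six-term sequence collapses to isomorphisms $I[3]\cong R^\times[3]$ and $I/I^3\cong R^\times/R^{\times 3}$. Next, applying the same snake lemma to $1\to S_{N=1}^\times\to S^\times\xrightarrow{N}I\to 1$ produces the exact sequence
\[
1\to S_{N=1}^\times[3]\to S^\times[3]\to I[3]\to S_{N=1}^\times/S_{N=1}^{\times 3}\to S^\times/S^{\times 3}\to I/I^3\to 1.
\]
All six groups are finite (the unit groups are finitely generated by Dirichlet's theorem, and $A[3]$, $A/A^3$ are finite for finitely generated abelian $A$), so the alternating product of their orders equals $1$. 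Substituting $I[3]\cong R^\times[3]$ and $I/I^3\cong R^\times/R^{\times 3}$ and rearranging reduces the proposition to the single torsion identity
\[
|S_{N=1}^\times[3]|=\frac{|S^\times[3]|}{|R^\times[3]|}=\frac{|\mu(S)[3]|}{|\mu(R)[3]|},
\]
where $\mu(\cdot)$ denotes the group of roots of unity and I have used $S^\times[3]=\mu(S)[3]$ and $R^\times[3]=\mu(R)[3]$ (the $3$-torsion of a finitely generated abelian group lives in its torsion subgroup).

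The remaining identity is a finite computation with cube roots of unity and is the crux of the argument. I would argue by cases according to whether a primitive cube root of unity $\omega$ lies in $S$, in $R$, or in neither, using that for $\zeta$ with $\zeta^3=1$ and $\zeta\neq 1$ one has $N_{L/K}(\zeta)=1$ precisely when $\zeta\notin K$: if $\omega\notin K$ then $L=K(\omega)$ and the $K$-conjugate of $\omega$ is $\omega^2$, so $N(\omega)=\omega^3=1$; if $\omega\in K$ then $N(\omega)=\omega^2\neq 1$. This yields $|S_{N=1}^\times[3]|=3$ exactly when $\omega\in S\setminus R$ and $|S_{N=1}^\times[3]|=1$ otherwise, matching $|\mu(S)[3]|/|\mu(R)[3]|$ in each of the three cases (both trivial when $\omega\notin S$; numerator $3$ and denominator $1$ when $\omega\in S\setminus R$; both $3$ when $\omega\in R$).

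The main obstacle is entirely isolated in this last step: the free ranks of the unit groups are handled automatically by the snake-lemma bookkeeping together with the fact that $C$ is a $2$-group, so the only genuinely delicate point is the behaviour of the $3$-torsion, i.e.\ computing the norms of cube roots of unity. (An alternative would be to compute $\operatorname{rank}(S_{N=1}^\times)$ directly from the splitting behaviour of the places of $K$ in $L$ via Dirichlet's unit theorem, but this is messier and still leaves the same cube-root-of-unity analysis, so I prefer the snake-lemma route.)
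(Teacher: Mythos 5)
Your argument is correct, but it takes a genuinely different route from the paper's. The paper works directly with the induced sequence of mod-cube groups $1\to S_{N=1}^\times/S_{N=1}^{\times3}\to S^\times/S^{\times3}\xrightarrow{N} R^\times/R^{\times3}\to 1$ and gets everything in one stroke from the splitting $u\mapsto u^2$, which satisfies $N(u^2)=u^4\equiv u\pmod{R^{\times3}}$; the count is then read off from this (split) short exact sequence of finite groups. You instead run the kernel--cokernel sequence for cubing twice: once on $1\to\im(N)\to R^\times\to C\to 1$, observing that $C$ is an elementary abelian $2$-group (because $N$ restricted to $R^\times$ is squaring) so cubing is an isomorphism on it, and once on $1\to S_{N=1}^\times\to S^\times\xrightarrow{N}\im(N)\to 1$; this reduces the proposition to the torsion identity $|S_{N=1}^\times[3]|=|S^\times[3]|/|R^\times[3]|$, which you settle by computing norms of cube roots of unity. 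What your route buys: the paper's proof asserts, without argument, exactness at the left and middle terms of its sequence (e.g.\ that a unit of $S$ whose norm lies in $R^{\times3}$ agrees modulo $S^{\times3}$ with a norm-one unit), and those assertions are precisely where $\mu_3$ could interfere; your snake-lemma bookkeeping handles the free parts automatically and isolates the delicate point into an explicit finite computation. What the paper's route buys: brevity, since the squaring splitting simultaneously gives surjectivity and the order count. One point you should make explicit: in the case $\omega\in S\setminus R$ you invoke $N(\omega)=1$, which requires $\omega\notin K$; this holds because every element of $S$ is integral over $R$ and $R$ is integrally closed in $K$, so $S\cap K=R$.
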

	\begin{proof}
		Begin by considering the chain of maps 
		\begin{equation*}
			\begin{tikzcd}
				S_{N=1}^\times\rar & S^\times\arrow[r,"N"]& R^\times,
			\end{tikzcd}
		\end{equation*}
		and note that $S_{N=1}^\times$ is the kernel of the norm map. Taking quotients by the subgroups of cubes, we get 
		\begin{equation}\label{eqn: exact seq units}
			\begin{tikzcd}
				S_{N=1}^\times/S_{N=1}^{\times3}\rar & S^\times/S^{\times 3}\arrow[r,"N"]& R^\times/R^{\times3},
			\end{tikzcd}
		\end{equation} 
		where we see that the kernel of the norm map $S^\times/S^{\times 3}\to R^\times/ R^{\times3}$ is $S_{N=1}^\times/S_{N=1}^{\times3}$. We claim that the sequence \eqref{eqn: exact seq units} is exact. In particular, we define a splitting $R^\times/R^{\times3}\to S^\times/S^{\times 3}$ taking $u\mapsto u^2$. This map is clearly well-defined, and we see that for $u\in R^\times/ R^{\times3}$, we have $N(u^2)=u^4=u$ in $R^\times/ R^{\times3}$. Thus, the sequence is exact; the result follows.
	\end{proof}

	Next, we show that it suffices to compute $|S^\times/S^{\times3}|$ for maximal orders. Fix a quadratic extension $L$ of $K$; let $\cals$ denote its ring of integers. Suppose that $S$ is an order in $L$, and recall that $\cals$ is the integral closure of $S$ in $L$. Then we have the following generalization of Dirichlet's unit theorem for orders.
	\begin{proposition}[e.g., {\cite[Theorem~I.12.12]{Neukirch}}]
		With notation as in the preceding paragraph, we have that $\rank(S)=\rank(\cals)=r+s-1$, where $r$ and $s$ denote the number of real and complex places of $L$.
	\end{proposition}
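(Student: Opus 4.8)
The plan is to read $\rank(S)$ and $\rank(\cals)$ as the free ranks of the unit groups $S^\times$ and $\cals^\times$ (this is the content of Dirichlet's unit theorem for orders), and to deduce the statement from two inputs: the classical Dirichlet unit theorem for the maximal order $\cals$, and the fact that $S^\times$ sits inside $\cals^\times$ as a subgroup of \emph{finite index}. Since $\cals$ is the ring of integers of the number field $L$, the classical theorem gives $\rank(\cals^\times)=r+s-1$, where $r$ and $s$ count the real and complex places of $L$. Because any finite-index subgroup of a finitely generated abelian group has the same free rank, it then suffices to prove that $[\cals^\times:S^\times]<\infty$, after which $\rank(S^\times)=\rank(\cals^\times)=r+s-1$ follows at once.

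To compare the two unit groups I would introduce the conductor $\mathfrak{f}=\{x\in\cals\mid x\cals\subseteq S\}$, the largest $\cals$-ideal contained in $S$. Since $S$ and $\cals$ are full-rank $\Z$-lattices in $L$ with $S\subseteq\cals$, the index $[\cals:S]$ is finite; consequently $\mathfrak{f}$ is a nonzero ideal of $\cals$ and the quotient ring $\cals/\mathfrak{f}$ is finite. Reduction modulo $\mathfrak{f}$ then yields a group homomorphism $\cals^\times\to(\cals/\mathfrak{f})^\times$ whose target is finite, so its kernel $U=\{u\in\cals^\times\mid u\equiv1\pmod{\mathfrak{f}}\}$ has finite index in $\cals^\times$. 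The key point is that $U\subseteq S^\times$: if $u-1\in\mathfrak{f}$ then $u\in1+\mathfrak{f}\subseteq S$, and because $\mathfrak{f}$ is an ideal of $\cals$ we also get $u\inv-1=-u\inv(u-1)\in\mathfrak{f}$, so that $u\inv\in1+\mathfrak{f}\subseteq S$ and hence $u\in S^\times$. Together with the evident inclusion $S^\times\subseteq\cals^\times$, this sandwiches $S^\times$ between $U$ and $\cals^\times$, forcing $[\cals^\times:S^\times]$ to be finite.

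Chaining these steps completes the argument. The only genuinely delicate points are verifying that the conductor $\mathfrak{f}$ is nonzero—this is exactly where the finiteness of $[\cals:S]$ is used—and checking that the kernel $U$ is closed under inversion inside $S$, which I expect to be the main (but routine) obstacle once the ideal property of $\mathfrak{f}$ is in hand. Beyond this, the principal conceptual care required is to invoke Dirichlet's unit theorem for the number field $L$ itself rather than for the relative extension $L/K$, since all of the order-theoretic content has been absorbed into the finite-index comparison between $S^\times$ and $\cals^\times$.
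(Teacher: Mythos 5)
Your proof is correct and complete. There is, however, nothing in the paper to compare it against: the paper states this proposition as a known result, with only a citation to Neukirch (Theorem~I.12.12) and no internal proof. Your argument---reading $\rank(S)$ as $\rank(S^\times)$ (the statement is indeed an abuse of notation, and yours is the intended reading), applying classical Dirichlet to $\cals^\times$, and transferring the rank to $S^\times$ by showing $[\cals^\times:S^\times]<\infty$ via the conductor $\mathfrak{f}$---is precisely the standard proof of the cited theorem, and every step checks out: $\mathfrak{f}\supseteq[\cals:S]\,\cals$ is nonzero, $\cals/\mathfrak{f}$ is finite, and the kernel of $\cals^\times\to(\cals/\mathfrak{f})^\times$ lies in $S^\times$ because $u-1\in\mathfrak{f}$ forces $u\inv-1=-u\inv(u-1)\in\mathfrak{f}$ as well, $\mathfrak{f}$ being an $\cals$-ideal contained in $S$. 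One connection worth noting: the paper has already recorded, as \eqref{eqn: class gp exact seq max order}, the exact sequence $1\to S^\times\to\cals^\times\to\bigoplus_{\fp}\cals_{(\fp)}^\times/S_{(\fp)}^\times\to\Cl(S)\to\Cl(\cals)\to1$ (Neukirch~I.12.9); exactness at $\cals^\times$ yields the same finite-index conclusion once one knows that the direct sum is finite, but your conductor argument is the more self-contained route, since it proves rather than presupposes that finiteness.
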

	Thus, we have reduced the problem of computing $|S_{N=1}^\times/S_{N=1}^{\times3}|$ to one of understanding the unit group of $\cals$. Ultimately, we would like to do so in terms of the unit group of $R$. Recall that any quadratic extension of $L$ is isomorphic $K(\sqrt{D})$ for $D\in K^\times$ such that $x^2-D$ is irreducible. By Dirichlet's unit theorem, we know that $\rank(R^\times)=r_1+r_2-1$. The unit group of $\cals$ is then entirely determined by the ramification data for each of the infinite places of $K$ in $L$.
	
	Let $\nu$ be an archimedean place of $K$. Recall that 
	\begin{equation}\label{eqn: tensor split}
		L\otimes_K K_\nu=\prod_{\mu}K_\mu,
	\end{equation} 
	where the product runs over the archimedean absolute values $\mu$ on $L$ extending $\nu$. If $\nu$ is complex, then $\nu$ is inert, and the product on the right-hand side of \eqref{eqn: tensor split} is another copy of $\C$. If $\nu$ is real, then there are two possibilities for the decomposition of $\nu$ in $L$: either $\nu$ extends to a complex place (it ramifies) or it splits into two real places. If $s_1$ denotes the number of real places of $K$ that split and $s_2$ the number of real places that ramify, then we see easily that $L$ has $2s_1+s_2+r_2$ infinite places. Thus, the free part of $\cals^\times$ is isomorphic to $\Z^{2s_1+s_2+r_2-1}$. To obtain a formula for $|\cals^\times/\cals^{\times 3}|$, we will need some way of quantifying $s_1$ and $s_2$ in terms of data pertaining to the base field $K$. 
	
	Enter the theory of specifications. Let $K_\infty$ denote the product of the completions of $K$ at the infinite places, i.e., we have $K_\infty=\prod_{\fp|\infty}K_\fp.$ Note that $K_\infty\simeq\R^{r_1}\times\C^{r_2}$ as real vector spaces.
	\begin{definition}\label{def: specifications}
		Consider the group $\{\pm1\}^{r_1}\times\{1\}^{r_2}$, whose elements we henceforth refer to as \emph{specifications}. We define a surjective homomorphism $s \colon K_\infty^\times\to\{\pm1\}^{r_1}\times\{1\}^{r_2}$ as follows. Given an element $\Delta=(\Delta_\fp)_\fp\in K_\infty^\times$, let $s(\Delta)$ be the tuple in $\{\pm1\}^{r_1}\times\{1\}^{r_2}$ indexed by the archimedean places of $K$ with 1's at all the complex places and $\mathrm{sign}(\Delta_\fp)\cdot 1$ at the real places $\fp$. Note that $s$ has kernel $\R_{+}^{r_1}\times\C^{r_2}$. For a fixed specification $\sigma$, we say that $\Delta\in K_\infty^\times$ \emph{has specification} $\sigma$ if $s(\Delta)=\sigma$.
	\end{definition}
	
	Let $\Delta=4D$ be the discriminant of $L$, and recall that the isomorphism class of $L$ is determined entirely by $\Delta$. Note that $D$ and $\Delta$ are elements of $K$ and that they have the same specification. Suppose that $\Delta$ has specification $\sigma=(\sigma_i)\in\{\pm1\}^{r_1}\times\{1\}^{r_2}$. Call $\sigma$ the \emph{specification of} $L$. Let $t_1$ be the number of $+1$'s appearing in $\sigma$; let $t_2$ be the number of $-1$'s. We will show that the unit group of $\cals$ depends only on the specification of $\Delta$ and, in particular, that $s_1=t_1$ and $s_2=t_2$. 
	
	Let $\nu$ be a real place of $K$, and suppose that $\sigma_\nu$ is the coordinate of $\sigma$ corresponding to $\nu$. Suppose that $\sigma_\nu=+1$. The place $\nu$ gives an embedding $K\hookrightarrow K_\nu\simeq\R$, and under this embedding $\sigma_\nu=+1$ implies that $D$ is sent to an element of $\R_{+}$. Tensoring $K_\nu$ with $K(\sqrt{D})=L$, we see that $L\otimes K_\nu$ is isomorphic to $\R\otimes\R$. If $\sigma_\nu=-1$, then the embedding $K\hookrightarrow K_\nu\simeq\R$ induced by $\nu$ takes $D$ to $\R_{-}$. Again, tensoring $K_\nu$ with $K(\sqrt{D})=L$, we see that $L\otimes K_\nu$ is isomorphic to $\C$. It follows that $s_1$ and $s_2$ indeed are the number of $+1$'s and $-1$'s appearing in $\sigma$. We summarize these observations in the following proposition. Let $\varsigma_{-}=1$ and $\varsigma_+=3$.
	\begin{prop}\label{prop: units mod cubes max orders}
		Let $L$ be a quadratic extension of $K$ with ring of integers $\cals$ and specification $\sigma$. If $L$ does not contain cube roots of unity, then \[|\cals^\times/\cals^{\times3}|=3^{2s_1+s_2+r_2-1}=3^{r_1+r_2-1}\prod_{i=1}^{r_1}\varsigma_{\sigma_i}.\] If $L$ contains the cube roots of unity, then \[|\cals^\times/\cals^{\times3}|=3^{r_1+r_2}\prod_{i=1}^{r_1}\varsigma_{\sigma_i}.\]
	\end{prop}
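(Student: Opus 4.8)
The plan is to combine the generalized Dirichlet unit theorem stated above with the elementary structure theory of finitely generated abelian groups. The surrounding discussion has already shown that $L$ has $2s_1+s_2+r_2$ infinite places, where $s_1$ and $s_2$ are the numbers of $+1$'s and $-1$'s in the specification $\sigma$; hence the free rank of $\cals^\times$ is $2s_1+s_2+r_2-1$. First I would record the decomposition $\cals^\times\simeq\mu_L\times\Z^{2s_1+s_2+r_2-1}$, where $\mu_L$ denotes the group of roots of unity of $L$. This $\mu_L$ is exactly the torsion subgroup of $\cals^\times$, since every root of unity is an algebraic integer and therefore lies in the maximal order $\cals$, and it is finite cyclic because any finite subgroup of the multiplicative group of a field is cyclic.

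Next I would invoke the general fact that for a finitely generated abelian group $A\simeq T\times\Z^r$ with $T$ finite one has $|A/A^3|=|T/T^3|\cdot 3^r$. Applied to $\cals^\times$, this yields $|\cals^\times/\cals^{\times3}|=|\mu_L/\mu_L^3|\cdot 3^{2s_1+s_2+r_2-1}$. Writing $w=|\mu_L|$, the cyclicity of $\mu_L$ gives $|\mu_L/\mu_L^3|=\gcd(3,w)$, which equals $3$ precisely when $3\mid w$---that is, when $\mu_L$ contains a primitive cube root of unity, equivalently when $L$ contains the cube roots of unity---and equals $1$ otherwise.

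It then remains to reconcile the exponent with the claimed formula. Since each real place of $K$ either splits or ramifies in $L$, we have $s_1+s_2=r_1$, and by definition $\prod_{i=1}^{r_1}\varsigma_{\sigma_i}=3^{s_1}$, because each $+1$ in $\sigma$ contributes $\varsigma_+=3$ and each $-1$ contributes $\varsigma_-=1$. Substituting $s_2=r_1-s_1$ gives $2s_1+s_2+r_2-1=(r_1+r_2-1)+s_1$, so when $L$ does not contain the cube roots of unity we obtain $|\cals^\times/\cals^{\times3}|=3^{r_1+r_2-1}\cdot 3^{s_1}=3^{r_1+r_2-1}\prod_{i=1}^{r_1}\varsigma_{\sigma_i}$, while when $L$ does contain them the extra factor $|\mu_L/\mu_L^3|=3$ promotes the leading exponent from $r_1+r_2-1$ to $r_1+r_2$, giving the second formula. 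The argument presents no real obstacle; the only points needing care are the identification of the torsion subgroup of $\cals^\times$ with $\mu_L$ and the bookkeeping identity $s_1+s_2=r_1$, both of which follow directly from the place counting already carried out.
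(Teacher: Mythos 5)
Your proposal is correct and takes essentially the same approach as the paper, which offers no separate proof and instead presents this proposition as a summary of the preceding discussion: the rank computation $2s_1+s_2+r_2-1$ via the splitting of the infinite places, the identification of $s_1,s_2$ with the counts of $+1$'s and $-1$'s in $\sigma$, and the extra factor of $3$ when $L$ contains the cube roots of unity. Your explicit bookkeeping---the decomposition $\cals^\times\simeq\mu_L\times\Z^{2s_1+s_2+r_2-1}$, the identity $|A/A^3|=|T/T^3|\cdot 3^r$, and $s_1+s_2=r_1$ with $\prod_{i=1}^{r_1}\varsigma_{\sigma_i}=3^{s_1}$---just fills in details the paper leaves implicit.
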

	Combining Propositions~\ref{prop: units mod cubes in terms of other stuff} and~\ref{prop: units mod cubes max orders}, we have the following result. 
	\begin{corollary}\label{cor: norm 1 units mod cubes}
		Let $S$ be an order in a quadratic extension of $K$ with specification $\sigma$. If $S$ does not contain cube roots of unity, or if both $S$ and $R$ contain cube roots of unity, then \[|S_{N=1}^\times/S_{N=1}^{\times3}|=\prod_{i=1}^{r_1}\varsigma_{\sigma_i}.\] If $S$ contains cube roots of unity and $R$ does not, then \[|S_{N=1}^\times/S_{N=1}^{\times3}|=3\prod_{i=1}^{r_1}\varsigma_{\sigma_i}.\]
	\end{corollary}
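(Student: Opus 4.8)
The plan is to evaluate the two cube-quotients appearing in Proposition~\ref{prop: units mod cubes in terms of other stuff} separately and then divide. That proposition gives
\[|S_{N=1}^\times/S_{N=1}^{\times3}| = \frac{|S^\times/S^{\times3}|}{|R^\times/R^{\times3}|},\]
so it suffices to compute the numerator and denominator. The basic tool is the structure theorem: for any finitely generated abelian group $A$ with torsion subgroup $A_{\mathrm{tors}}$ and free rank $n$, one has $A/3A \simeq (\Z/3\Z)^n \oplus A_{\mathrm{tors}}/3A_{\mathrm{tors}}$, whence $|A/3A| = 3^n\cdot|A_{\mathrm{tors}}[3]|$, using $|A_{\mathrm{tors}}/3A_{\mathrm{tors}}| = |A_{\mathrm{tors}}[3]|$ for a finite abelian group. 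Applied to a unit group, the torsion subgroup is the group of roots of unity, whose $3$-torsion has order $3$ precisely when the ring contains the primitive cube roots of unity and order $1$ otherwise.

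For the numerator, I would invoke the version of Dirichlet's unit theorem for orders recorded above, which gives $\rank(S^\times) = \rank(\cals^\times) = 2s_1 + s_2 + r_2 - 1$, together with the identifications $s_1 = t_1$ and $s_2 = t_2$ established in the discussion preceding the corollary. Since $\prod_{i=1}^{r_1}\varsigma_{\sigma_i} = 3^{t_1} = 3^{s_1}$, this yields $3^{\rank(S^\times)} = 3^{r_1+r_2-1}\prod_{i=1}^{r_1}\varsigma_{\sigma_i}$, and hence
\[|S^\times/S^{\times3}| = 3^{\,r_1+r_2-1+\varepsilon_S}\prod_{i=1}^{r_1}\varsigma_{\sigma_i},\]
where $\varepsilon_S\in\{0,1\}$ records whether $S$ contains the cube roots of unity. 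This is exactly the shape of Proposition~\ref{prop: units mod cubes max orders}, except that the cube-root condition is now read off $S$ rather than off $\cals$: only the rank (which $S$ and $\cals$ share) and the $3$-part of the torsion of the roots of unity enter, and the identical argument proving the maximal-order case applies verbatim to $S$. For the denominator, ordinary Dirichlet gives $\rank(R^\times) = r_1+r_2-1$, so $|R^\times/R^{\times3}| = 3^{\,r_1+r_2-1+\varepsilon_R}$, with $\varepsilon_R\in\{0,1\}$ recording cube roots of unity in $R$.

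Dividing, the powers of $3^{r_1+r_2-1}$ cancel and one is left with $3^{\varepsilon_S-\varepsilon_R}\prod_{i=1}^{r_1}\varsigma_{\sigma_i}$. Because $R\subseteq S$, any cube root of unity in $R$ already lies in $S$, so the case $\varepsilon_S=0,\ \varepsilon_R=1$ cannot occur; the remaining three possibilities give the stated formula: the correction is $3^0=1$ both when $\varepsilon_S=0$ (forcing $\varepsilon_R=0$) and when $\varepsilon_S=\varepsilon_R=1$, yielding $\prod_{i=1}^{r_1}\varsigma_{\sigma_i}$, while the correction is $3$ when $\varepsilon_S=1,\ \varepsilon_R=0$, yielding $3\prod_{i=1}^{r_1}\varsigma_{\sigma_i}$. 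I expect the only genuine content beyond the exponent bookkeeping $2s_1+s_2+r_2-1-(r_1+r_2-1)=s_1$ to be the observation that the relevant invariant for the numerator is the $3$-torsion of the roots of unity \emph{in $S$} — which really can differ from that of $\cals$ for a nonmaximal order — so the maximal-order computation must be re-run for $S$ rather than inherited, and the containment $R\subseteq S$ must be used to rule out the fourth a priori case.
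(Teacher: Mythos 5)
Your proof is correct and follows essentially the same route as the paper, whose proof of this corollary is simply the one-line instruction to combine Proposition~\ref{prop: units mod cubes in terms of other stuff} with Proposition~\ref{prop: units mod cubes max orders}; your exponent bookkeeping $2s_1+s_2+r_2-1-(r_1+r_2-1)=s_1$ and the case analysis on $(\varepsilon_S,\varepsilon_R)$ are exactly the implicit content of that combination. Your one genuine addition is making explicit that Proposition~\ref{prop: units mod cubes max orders} is stated for the maximal order $\cals$ but must be re-run for the possibly nonmaximal order $S$ (same rank by the generalized Dirichlet theorem, torsion read off $S$ itself), a subtlety the paper passes over silently --- and which matters, since the corollary's hypotheses refer to cube roots of unity in $S$, not in $\cals$.
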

	
	Corollary~\ref{cor: orbit corr} now follows from aggregating Theorem~\ref{thm: GL parametrization}, Theorem~\ref{thm: form ideal correspondence}, and Corollary~\ref{cor: norm 1 units mod cubes}.
	
	\subsection{Reducible forms}\label{subsec: reducible param}
	We have seen that under the correspondences of Theorem~\ref{thm: the parametrization} and Theorem~\ref{thm: form ideal correspondence}, the projective forms correspond to invertible 3-torsion ideal classes in $\Cl(S/R)[3]$.  Bhargava and Varma showed that the reducible forms correspond exactly to the 3-torsion ideals in the \emph{ideal group}. In the following we prove an analogue of the correspondence between reducible forms and 3-torsion ideals.
	
	\begin{lemma}\label{lem: red corr to cubes}
		Fix an ideal $\fa$ of $R$. Let $f\in V_\fa$, and let $(S,I,\delta,s)$ be a representative of $\Phi_\fa\inv(f)$. Then $f$ has a $K$-rational zero as a binary cubic form if and only if $\delta =\gamma^3 $ for some $\gamma\in (S\otimes_RK)^\times$.
	\end{lemma}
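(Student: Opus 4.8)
The plan is to work with the equivariant description of the form attached to $(S,I,\delta,s)$. Recall from the construction preceding Theorem~\ref{thm: the parametrization} that, writing $S = R+\fa\xi$ with $\pi_K(1\wedge\xi)=1$ and $I = R\alpha+\fa\beta$, the form is $f(x,y) = C(x,y) = \pi\!\left(1\wedge(\alpha x+\beta y)^3\delta\inv\right)$. Since every $w\in L$ decomposes uniquely as $w = u+v\xi$ with $u,v\in K$ and $\pi_K(1\wedge w)=v$, the value $C(x,y)$ is exactly the $\xi$-coordinate of $\zeta^3\delta\inv$ for $\zeta = \alpha x+\beta y$; hence $C(x,y)=0$ if and only if $\zeta^3\delta\inv\in K$. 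As $\alpha,\beta$ form a $K$-basis of $L$, the assignment $(x,y)\mapsto\alpha x+\beta y$ carries $K^2\setminus\{0\}$ bijectively onto $L\setminus\{0\}$. Therefore $f$ has a $K$-rational zero if and only if there exist $\zeta\in L\setminus\{0\}$ and $m\in K$ with $\zeta^3 = m\delta$.

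The forward implication is then immediate: if $\delta=\gamma^3$ for some $\gamma\in L^\times$, take $\zeta=\gamma$, so that $\zeta^3\delta\inv = 1\in K$ and the point determined by $\gamma = \alpha x_0+\beta y_0$ is a $K$-rational zero. For the converse, suppose $\zeta_0\in L\setminus\{0\}$ and $m\in K$ satisfy $\zeta_0^3 = m\delta$. I first claim $\zeta_0\in L^\times$ and $m\in K^\times$. Since $S$ is nondegenerate, $L$ is an \'etale quadratic $K$-algebra, so $L$ is either a field or $L\simeq K\times K$; as $\delta\in L^\times$, the relation $\zeta_0^3=m\delta$ forces $\zeta_0\in L^\times$ (in the split case, a vanishing coordinate of $\zeta_0$ would force that coordinate of $m\delta$ to vanish, whence $m=0$ and then $\zeta_0=0$, a contradiction), and then $m\neq0$. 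Applying the norm $N=N_{L/K}$, using $N(m)=m^2$ and the balanced condition $N(\delta)=s^3$, gives
\[
N(\zeta_0)^3 = N(\zeta_0^3) = N(m\delta) = m^2 s^3,
\]
so that $m^2 = (N(\zeta_0)/s)^3$ is a cube in $K^\times$. In $K^\times/(K^\times)^3$ the class $[m]$ then satisfies $[m]^2=1$ and $[m]^3=1$, so its order divides $\gcd(2,3)=1$; hence $m=k^3$ for some $k\in K^\times$, and $\delta = m\inv\zeta_0^3 = (\zeta_0/k)^3$ is a cube in $L^\times$.

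The step I expect to be the main obstacle is the converse, and specifically the point that $\delta = m\inv\zeta_0^3$ is a priori only a cube up to the scalar factor $m\inv\in K^\times$---and a scalar in $K^\times$ need not be a cube in $L^\times$ (for instance $2$ is not a cube in $\Q(\sqrt2)$). What makes the lemma true is precisely the balanced hypothesis $N(\delta)=s^3$, which is exactly what the norm computation needs in order to promote $m$ to a cube already in $K^\times$. I would be careful to phrase the unit and nonvanishing argument so that it treats the split case $L\simeq K\times K$ on the same footing as the field case, invoking nondegeneracy of $S$ to guarantee that $L$ is \'etale.
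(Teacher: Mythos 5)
Your proof is correct, and its substantive half---the converse---is essentially the paper's own argument: both directions of the paper's proof and yours hinge on the identity $\zeta^3=m\delta$ with $m\in K$ (in the paper $m$ is the element $r=c_0x_0^3+3c_1x_0^2y_0+3c_2x_0y_0^2+c_3y_0^3$ coming from \eqref{system of equations}), followed by applying $N_{L/K}$, invoking the balanced condition $N(\delta)=s^3$ to get that $m^2$ is a cube in $K^\times$, and concluding that $m$ itself is a cube, whence $\delta=(\zeta/k)^3$. Where you genuinely diverge is the forward direction: the paper first replaces $(S,I,\delta,s)$ by the equivalent quadruple $(S,\gamma\inv I,1,N(\gamma)\inv s)$, chooses $\lambda\in I\cap R$, completes it to a $K$-basis $\lambda,\mu$ of $S\otimes_RK$, and argues that the resulting $\GL_2(K)$-translate of $f$ has vanishing $x^3$-coefficient; you instead evaluate the equivariant formula $C(x,y)=\pi_K\bigl(1\wedge(\alpha x+\beta y)^3\delta\inv\bigr)$ directly at the coordinates of $\gamma=\alpha x_0+\beta y_0$, getting $C(x_0,y_0)=\pi_K(1\wedge 1)=0$. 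Your route is cleaner: it avoids both the passage to an equivalent quadruple and the implicit step that possession of a $K$-rational zero is a $\GL_2(K)$-invariant property. You also supply details the paper leaves tacit---namely that $\zeta_0\in L^\times$ and $m\in K^\times$, which you verify uniformly in the field and split cases using that nondegeneracy of $S$ makes $L$ \'etale; this is a genuine (if small) gap-fill, since in a non-reduced quadratic algebra the implication $\zeta_0^3\in\delta K\Rightarrow\zeta_0\in L^\times$ would fail.
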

	\begin{proof}
		Suppose $\delta=\gamma^3$ for some $\gamma\in(S\otimes_RK)^\times$. Replace $I$ by $\gamma\inv I$, $\delta$ by $1$, and $s$ by $N(\gamma)\inv s$, yielding the equivalent balanced quadruple $(S,\gamma\inv I,1,N(\gamma)\inv s)$. Note that $(N(\gamma)\inv s)^3=1$. We remark that it is sufficient to show that any $\GL_2(K)$-translate of $f$ has a $K$-rational zero. As before, write $I=R\alpha+\fa\beta$ for some $\alpha,\beta\in S\otimes_RK$ with $\pi_K(\alpha\wedge\beta)=s$. Let $\lambda\in I\cap R$, and complete $\lambda$ to a $K$-basis for $(S\otimes_RK)^\times=K\lambda+K\mu$. Expressing $\alpha$ and $\beta$ in terms of $\lambda$ and $\mu$, say $\alpha=\ell_1\lambda+m_1\mu$ and $\beta=\ell_2\lambda+m_2\mu$ for $\ell_i,m_i\in K$ and $1\leq i\leq 2$, we see that \[I=(\ell_1R+\ell_2\fa)\lambda+(m_1R+\fa m_2)\mu.\] Then $f$ is $\GL_2(K)$-equivalent to the form given by \[\pi(1\wedge(x\lambda+y\mu)^3).\] Since $\lambda\in R$, we have $x^3\lambda^3\in K$ and $\pi(1\wedge x^3\lambda^3)=0$. It follows that some $\GL_2(K)$-translate of $f$ has vanishing $x^3$-coefficient and hence a $K$-rational zero. Therefore, $f$ must have a $K$-rational zero. 
		
		Conversely, suppose that $f(x_0,y_0)=0$ for some $x_0,y_0\in K$. Writing $I=R\alpha+\fa\beta$, we see that \eqref{system of equations} implies that \begin{equation}\label{eqn: delta cube}
			(x_0\alpha+y_0\beta)^3=\delta(c_0x_0^3+3c_1x_0^2y_0+3c_2x_0y_0^2+c_3y_0^3),
		\end{equation} where we recall that each $c_i$ lies in $\fa^{-i}$. Since $x_0,y_0\in K$, we have that $r=c_0x_0^3+3c_1x_0^2y_0+3c_2x_0y_0^2+c_3y_0^3\in K$. Now, set $\zeta=\alpha x_0+\beta y_0$, and apply norms to both sides of the equation above. This gives $N(\zeta)^3=r^2N(\delta)$. Since $(S,I,\delta,s)$ is balanced, it follows that $N(\delta)=s^3$, and we have that $r^2$ is a cube. It follows that $r$ itself is a cube, and \eqref{eqn: delta cube} tells us that $\delta=(x_0\alpha+y_0\beta)^3/r$.
	\end{proof}

	Let $V_\fa^\red$ denote the subset of reducible forms in $V_\fa$. For $S$ an oriented quadratic ring over $R$ of type $\fa$, let $\calh(S)^\red$ denote the subgroup of $\fa$-balanced quadruples $(S,I,\delta,s)$ for which $\delta$ is a cube; by Lemma~\ref{lem: red corr to cubes}, this is isomorphic under $\Phi_\fa$ to the subgroup of reducible forms corresponding to $S$. Define a map 
	\[\vphi_\fa(S) \colon \mathcal{I}(S/R)[3]\to \calh(S)^\red,\qquad I\mapsto(S,I,1,1).\] 
	Immediately, we see that $\im(\vphi_\fa(S))\subset\calh(S)^\red$. In fact, $\vphi_\fa(S)$ is an isomorphism $\mathcal{I}(S/R)[3]\to\calh(S)^\red$.  
	\begin{theorem}\label{thm: red corresponds to 3-torsion ideals}
		The map $\vphi_\fa$ is an isomorphism between $\mathcal{I}(S/R)[3]$ and $\calh(S)^\red$.
	\end{theorem}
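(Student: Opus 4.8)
The plan is to show that $\varphi_\fa(S)$ is a well-defined group homomorphism and then to exhibit an explicit two-sided inverse. First I would confirm well-definedness: for $I\in\mathcal{I}(S/R)[3]$ one has $I^3=S$ and, because $I$ lies in the relative ideal group, $N_{S/R}(I)=[S:I]_R=R$; hence the quadruple $(S,I,1,1)$ satisfies $I^3=S\subset 1\cdot S$, $[S:I]_R=1\cdot R$, and $1^3=N(1)$, so it is $\fa$-balanced with $\delta=1=1^3$ a cube and therefore lies in $\calh(S)^\red$. To see that $\varphi_\fa(S)$ is a homomorphism I would use the equivariant description of $\Phi_\fa$ via $\zeta\mapsto\pi(1\wedge\zeta^3\delta\inv)$ together with the fact that, on the invertible locus, the group law on $\calh(S)\simeq\SL(R\oplus\fa)\backslash V_\fa^{\disc(S)}$ is induced by ideal multiplication (the binary-cubic analogue of the Gauss composition of Theorem~\ref{Gauss composition over a DD}); this gives $(S,I,1,1)\cdot(S,J,1,1)=(S,IJ,1,1)=\varphi_\fa(S)(IJ)$.

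For surjectivity I would build the inverse map directly. Starting from a class in $\calh(S)^\red$ represented by $(S,I,\delta,s)$ with $\delta=\gamma^3$, I would invoke the oriented equivalence $\sim$ to replace it by $(S,\gamma\inv I,1,N(\gamma)\inv s)$, thereby normalizing $\delta$ to $1$. The balanced condition then forces $s^3=N(1)=1$, so $[S:\gamma\inv I]_R=R$, while $\delta=1$ gives $(\gamma\inv I)^3\subset S$. The decisive point is to upgrade this containment to the equality $(\gamma\inv I)^3=S$: granting it, $\gamma\inv I$ is invertible with inverse $(\gamma\inv I)^2$ and has trivial relative norm, so $\gamma\inv I\in\mathcal{I}(S/R)[3]$ and $(S,I,\delta,s)=\varphi_\fa(S)(\gamma\inv I)$. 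I would then check that the assignment $(S,I,\delta,s)\mapsto\gamma\inv I$ is independent of the representative and of the auxiliary choices of $\gamma$ and $s$, making it a genuine inverse to $\varphi_\fa(S)$.

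The hard part will be exactly this invertibility step, i.e.\ showing that a reducible ($\delta$-a-cube) class is represented by an invertible three-torsion ideal. At primes $\fp$ where $S_{(\fp)}$ is regular, the containment $(\gamma\inv I)^3_{(\fp)}\subset S_{(\fp)}$ together with $[S_{(\fp)}:(\gamma\inv I)_{(\fp)}]_{R_{(\fp)}}=R_{(\fp)}$ forces equality, since fractional ideals over a regular local ring are invertible and a containment of unit index is an equality; the real work is at the finitely many non-maximal primes, where one must rule out a proper ideal of unit index. This is the precise analogue of the Bhargava--Varma statement over $\Z$ that the reducible classes exhaust the three-torsion of the ideal group, and I expect it to be the principal technical obstacle. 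Injectivity should then reduce to routine unit bookkeeping: unwinding $(S,I,1,1)\sim(S,J,1,1)$ and using that the identity is the only orientation-preserving automorphism of $S$ yields $J=\kappa I$ with $\kappa^3=1$ and $N(\kappa)=1$, after which controlling the norm-one cube roots of unity — exactly the analysis behind the kernel $S_{N=1}^\times/S_{N=1}^{\times3}$ in the proof of Theorem~\ref{thm: form ideal correspondence} — gives $\kappa\in S^\times$ and hence $I=J$.
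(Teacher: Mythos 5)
Your proposal has the same skeleton as the paper's proof: well-definedness is the same observation, your injectivity is the paper's kernel computation (unwind $(S,I,1,1)\sim(S,S,1,1)$ into $I=\kappa S$ with $\kappa^3=1$), and your surjectivity begins with the paper's normalization $(S,I,\gamma^3,s)\sim(S,\gamma\inv I,1,N(\gamma)\inv s)$, the leftover unit $u=N(\gamma)\inv s$ with $u^3=1$ being absorbed in the paper by the further twist $\kappa=u^{-2}$ (harmless, since $u\in R^\times$ does not change the ideal). Two differences matter. First, your homomorphism step appeals to a composition law on binary cubic forms that this paper never constructs; it is also unnecessary, since with the evident componentwise multiplication of quadruples one has $\vphi_\fa(I)\,\vphi_\fa(J)=(S,IJ,1,1)=\vphi_\fa(IJ)$ by inspection. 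Second --- and this is the genuine gap --- you leave your self-declared ``decisive point,'' that the normalized ideal $J=\gamma\inv I$ satisfies $J^3=S$ and hence lies in $\mathcal{I}(S/R)[3]$, as an expectation rather than an argument. You have also misjudged where the difficulty sits: \emph{if} $J$ is invertible, this is already proved in the paper (see the remark immediately following Theorem~\ref{thm: GL parametrization}, which combines Lemma~\ref{proper cont upstairs implies proper cont downstairs} with multiplicativity of the ideal norm on invertible ideals, Lemma~\ref{ideal norm is multiplicative}, to show that a balanced quadruple with invertible ideal has $I^3=\delta S$), so under an invertibility hypothesis your ``principal technical obstacle'' is a one-line citation. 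The problem is that balancedness alone does not supply invertibility, and neither your sketch nor the paper's proof (which simply asserts $(S,u^{-2}\gamma\inv I,1,1)=\vphi_\fa(S)(u^{-2}\gamma\inv I)$ without checking the ideal lies in the domain) addresses this.

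Worse, the step you postponed is not merely hard but false, precisely at the non-maximal primes you flagged, so no local analysis can complete your plan of ``ruling out a proper ideal of unit index.'' Take $R=\Z$, $\fa=\Z$, $S=\Z+9\Z[i]$ (the conductor-$9$ order in $\Z[i]$), and $J=3\Z[i]$. Then $J$ is a fractional $S$-ideal; the change of basis $(1,9i)\mapsto(3,3i)$ has determinant $1$, so $[S:J]_{\Z}=\Z$; and $J^3=27\Z[i]\subsetneq S$. Hence $(S,J,1,1)$ is balanced with $\delta=1$ a cube, so it defines a class in $\calh(S)^\red$; yet any equivalent quadruple $(S,I,1,1)$ satisfies $J=\kappa\phi(I)$ with $\kappa^3=1$, whence $I^3=\phi\inv(J^3)\neq S$, so this class is not in the image of $\vphi_\fa(S)$. (The failure is exactly that the module index is not multiplicative on non-invertible ideals: here $[S:J]_{\Z}=\Z$ while $[S:J^3]_{\Z}=81\Z$.) Consequently surjectivity fails for $\calh(S)^\red$ as literally defined; the statement is correct only after restricting to quadruples with $I$ invertible (equivalently, to reducible \emph{projective} forms, which is how the analogous statement reads in Bhargava--Varma), and with that restriction your argument, completed by the citation above, coincides with the paper's proof. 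A final shared caveat: your injectivity, like the paper's, uses ``$\kappa^3=1\Rightarrow\kappa\in S$,'' which fails when $S\otimes_RK$ contains a primitive cube root of unity $\omega\notin S$ (for $S=\Z[\sqrt{-3}]$ one has $\omega S\neq S$ in $\mathcal{I}(S/R)[3]$ but $\vphi_\fa(S)(\omega S)=\vphi_\fa(S)(S)$); since this move is identical to the paper's, it is a defect of the statement in that edge case rather than of your proposal specifically.
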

	\begin{proof}   
		Suppose that the relative 3-torsion ideal $I$ in $\mathcal{I}(S/R)[3]$ is mapped to a quadruple equivalent to the identity element $(S,S,1,1)$. That is, suppose $(S,S,1,1)\sim (S,I,1,1)$. Then there exists $\kappa\in (S\otimes_RK)^\times$ such that $I=\kappa S$ and $1=\kappa^3$. It follows that $\kappa\in S$ and hence that $I=\kappa S=S$. To see why the map is surjective, let $(S,I,\delta,s)\in\calh(S)^\red$ so that $\delta=\gamma^3$ for some $\gamma\in (S\otimes_RK)^\times$. Then $(S,I,\delta,s)$ is equivalent to $(S,\gamma\inv I,1,N(\gamma)\inv s)$, where we note that $u=N(\gamma)\inv s\in R^\times$ is such that $u^3=1$. Noting that $(S,\gamma\inv I,1,u)\sim(S,u^{-2}\gamma\inv,1,1)=\vphi_\fa(S)(u^{-2}\gamma\inv I)$, the result follows.  
	\end{proof}
	
	\begin{corollary}
		If $S$ is a maximal order, then $\calh(S)^\red$ contains only the identity element $(S,S,1)$. 
	\end{corollary}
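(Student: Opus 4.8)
The plan is to deduce the corollary directly from the isomorphism established in Theorem~\ref{thm: red corresponds to 3-torsion ideals}. That theorem exhibits $\vphi_\fa$ as an isomorphism $\mathcal{I}(S/R)[3] \xrightarrow{\sim} \calh(S)^\red$, so it suffices to prove that the relative $3$-torsion ideal group $\mathcal{I}(S/R)[3]$ is trivial whenever $S$ is a maximal order.

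First I would observe that $\mathcal{I}(S/R)[3]$ is a subgroup of the full $3$-torsion $\mathcal{I}(S)[3]$ of the ideal group, so it is enough to show that $\mathcal{I}(S)$ has no nontrivial $3$-torsion. This is precisely the unique-factorization property recalled in \S\ref{subsec: class group}: since $S$ is a maximal order, every nonzero invertible fractional ideal of $S$ factors uniquely as a product of prime ideals, so $\mathcal{I}(S)$ is a free abelian group on the set of primes and is therefore torsion-free. In particular, any $I \in \mathcal{I}(S)$ with $I^3 = S$ must equal $S$.

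Combining these two steps, $\mathcal{I}(S/R)[3]$ is trivial, and transporting this triviality across the isomorphism $\vphi_\fa$ shows that $\calh(S)^\red$ consists solely of the identity element $(S,S,1,1)$. I do not anticipate any genuine obstacle here: all of the content is already packaged in Theorem~\ref{thm: red corresponds to 3-torsion ideals} together with the torsion-freeness of the ideal group of a maximal order, so the corollary is essentially a one-line consequence once those facts are in hand.
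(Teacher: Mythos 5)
Your proposal is correct and matches the paper's argument: the paper likewise invokes the isomorphism $\vphi_\fa\colon \mathcal{I}(S/R)[3]\to\calh(S)^\red$ from Theorem~\ref{thm: red corresponds to 3-torsion ideals} and then notes that maximal orders are Dedekind domains, whose ideal groups are torsion-free by unique factorization into primes, so $\mathcal{I}(S/R)[3]$ is trivial. Your spelling out of the torsion-freeness of $\mathcal{I}(S)$ is just a slightly more explicit rendering of the same one-line argument.
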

	\begin{proof}
		This follows because maximal orders are Dedekind domains, and $\mathcal{I}(S/R)[3]$ is trivial when $S$ is a Dedekind domain.
	\end{proof}
	
	\subsection{Projectivity}\label{subsec: proj} Recall from \S\ref{subsec: BCFs and Bhargava Cubes} that a projective binary cubic form $f\in V_\fa$ is a form for which certain $\fa$-multiples of the coefficients of its Hessian covariant do not share a common prime divisor. Precisely, if \[f(x,y)=ax^3+3bx^2y+3cxy^2+dy^3\] for $a\in\fa$, $b\in R$, $c\in\fa\inv$, and $d\in\fa^{-2}$, then $f$ is projective as long as the ideals \[(b^2-ac)R;\quad (ad-bc)\fa;\quad (c^2-bd)\fa^2\] do not share a common prime factor. 
	
	In this subsection, we prove several facts about the set of projective forms in $V_\fa$. In particular, we compute its $\fp$-adic volume for each prime $\fp$. To do so, we establish the following $\fp$-adic notation. Given a prime $\fp$ of $R$ and an element $x\in K$, let $\nu_\fp(x)$ denote the standard $\fp$-adic valuation of $x$, and let $|x|_\fp=N(\fp)^{-\nu_\fp(x)}$ denote the standard $\fp$-adic absolute value of $x$. Let $K_\fp$ denote the completion of $K$ at $\fp$. Given a set $X\subset V(K)$, let $X_\fp$ denote its $\fp$-adic closure in $V(K_\fp)$. 
	\begin{definition}
		For each prime $\fp$ of $R$, let $Z_\fp$ be a nonempty, open subset of $\{v\in V(R_\fp)\mid \disc(v)\neq0\}$ with the following significance: $Z_\fp$ has boundary of measure 0 and is preserved by $\GL(R\oplus\fa)_\fp$, which we take to be the closure of $\GL(R\oplus\fa)$ in $\GL_2(K_\fp)$. A set $Z\subset V_\fa$ is said to be \emph{defined by congruence conditions} if every element $v$ of $Z$, when regarded as an element of $V(K_\fp)$, is also an element of $Z_\fp$ for a collection of $Z_\fp$'s as defined above. A set $Z$ defined by congruence conditions is said to be \emph{acceptable} if $Z_\fp$ contains all forms $v\in V(R_\fp)$ such that $\nu_\fp(\disc(v))<2$ for all but finitely many primes $\fp$ of $R$. 
	\end{definition}

	\begin{lemma}\label{lem: projective is acceptable}
		The set $\calp$ of projective forms in $V_\fa$ is $\GL(R\oplus\fa)$-invariant, acceptable set defined by congruence conditions.
	\end{lemma}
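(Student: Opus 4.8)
The plan is to verify the three asserted properties of $\calp$ in turn—$\GL(R\oplus\fa)$-invariance, being defined by congruence conditions, and acceptability—keeping in mind that projectivity of $f$ is by definition the primitivity of its Hessian covariant $H(f)$, which is a local and isomorphism-invariant condition on a linear binary quadratic form. For invariance, I would invoke the equivariant description of the Hessian from \S\ref{subsec: BCFs and Bhargava Cubes} as a functorial slicing of the Bhargava cube: because that construction is natural, an isomorphism $f\cong g\cdot f$ of triply symmetric binary cubic forms (for $g\in\GL(R\oplus\fa)$) induces an isomorphism of the associated linear binary quadratic forms $H(f)\cong H(g\cdot f)$, and primitivity is preserved under isomorphism (Definition~\ref{def: BQF}, Remark~\ref{rmk: primitive defs are same}). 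Hence $f$ is projective iff $g\cdot f$ is, so $\calp$ is $\GL(R\oplus\fa)$-invariant. Concretely, the Hessian coefficient vector transforms under $g$ by $\Sym^2(g)$ twisted by a power of $\det g\in R^\times$, which preserves the ideal $(b^2-ac)R+(ad-bc)\fa+(c^2-bd)\fa^2$.

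To present $\calp$ as defined by congruence conditions, I would set, for each prime $\fp$,
\[Z_\fp := \{v\in V(R_\fp)\mid \disc(v)\neq 0 \text{ and } H(v)\text{ is primitive over }R_\fp\}.\]
I then check the required properties: $Z_\fp$ is nonempty (generic forms are nondegenerate and projective), open (each defining condition is open), and $\GL(R\oplus\fa)_\fp$-invariant (by the invariance step applied locally, together with $\disc$ being $\GL(R\oplus\fa)_\fp$-invariant up to a unit). For the measure-zero boundary I would argue that over the DVR $R_\fp$ the non-primitive locus is closed, while the primitive locus is \emph{also} closed because ``a given coefficient is a unit'' is a clopen condition on $V(R_\fp)$; thus the primitive locus is clopen and $\partial Z_\fp\subseteq\{\disc=0\}$, a proper subvariety of measure zero. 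Since both primitivity and nondegeneracy may be checked prime-by-prime, a form in $V_\fa$ lies in every $Z_\fp$ exactly when it is nondegenerate and projective, so $\calp$ (as the set of nondegenerate projective forms) is defined by the congruence conditions $\{Z_\fp\}_\fp$.

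For acceptability I would use the triply-symmetric discriminant identity already invoked in the proof of Theorem~\ref{thm: the parametrization}, namely $\disc(f)=\disc(H(f))$: writing $H(f)=px^2+qxy+ry^2$ with $p=b^2-ac\in R$, $q=ad-bc\in\fa\inv$, $r=c^2-bd\in\fa^{-2}$, one has $\disc(f)=q^2-4pr\in\fa^{-2}$. If $H(f)$ is non-primitive at $\fp$, then $\nu_\fp(p)\geq 1$, $\nu_\fp(q\fa)\geq 1$, and $\nu_\fp(r\fa^2)\geq 1$, whence $\nu_\fp(q^2\fa^2)=2\nu_\fp(q\fa)\geq 2$ and $\nu_\fp(4pr\fa^2)=\nu_\fp(4)+\nu_\fp(p)+\nu_\fp(r\fa^2)\geq 2$, giving $\nu_\fp(\disc(f)\fa^2)\geq 2$. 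Under the local trivialization of $\fa_\fp$ identifying $v\in V(R_\fp)$ with the localization of $f$, one verifies $\nu_\fp(\disc(v))=\nu_\fp(\disc(f)\fa^2)$; hence non-projectivity (or degeneracy) at $\fp$ forces $\nu_\fp(\disc(v))\geq 2$. Contrapositively every $v$ with $\nu_\fp(\disc(v))<2$ lies in $Z_\fp$, at every prime, which is stronger than acceptability demands.

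The main obstacle is the acceptability step, precisely the need to make the discriminant bookkeeping uniform across all primes—including $\fp\mid 2$, where the factor of $4$ in $\disc=q^2-4pr$ could in principle interfere—and to confirm that the local trivialization relates $\nu_\fp(\disc(v))$ to the valuation of the discriminant \emph{ideal} $\disc(f)\fa^2$ rather than to $\nu_\fp(\disc(f))$. Fortunately the bound $\nu_\fp(\disc(f)\fa^2)\geq 2$ is robust, since the two summands $q^2\fa^2$ and $4pr\fa^2$ each independently have valuation at least $2$, so no prime is exceptional. A secondary technical point is the clopen-ness underlying the measure-zero boundary claim, which hinges on $R_\fp$ being a DVR so that being a unit is a clopen condition.
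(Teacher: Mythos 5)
Your proof is correct and follows essentially the same route as the paper's: invariance via covariance of the Hessian together with preservation of (im)primitivity, congruence conditions via the local projective locus $Z_\fp$, and acceptability via the identity $\disc(f)=\disc(H(f))$. The only notable difference is that your valuation bookkeeping with the ideal factors of $\fa$ makes the bound $\nu_\fp(\disc(v))\geq 2$ on non-projective forms uniform over \emph{all} primes (including $\fp\mid\fa$), slightly stronger than the paper, which verifies it only for $\fp\nmid\fa$ --- enough, since acceptability concerns all but finitely many primes.
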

	\begin{proof}
		Recall that a form $f(x,y)=ax^3+3bx^2y+3cxy^2+dy^3$ in $V_\fa$ is projective if and only if there is no prime $\fp$ dividing all three of \[(b^2-ac)R,\quad (ad-bc)\fa,\quad (c^2-bd)\fa^2,\] which are the coefficients of the Hessian covariant of $f$. The $\GL(R\oplus\fa)$-invariance of $\calp$ follows from the fact that acting by an element $g$ of $\GL(R\oplus\fa)$ on a primitive form $f\in S$ relates the Hessians of $f$ and $gf$ by $g$. Because the translation of an imprimitive binary quadratic form (that is, one whose coefficients are all divisible by some prime $\fp$) by an element of $\GL(R\oplus\fa)$ is also imprimitive, it follows that $gf$ is also projective. 
		
		Now, note that $\calp_\fp$ is simply the set of forms $f$ in $(V_\fa)_\fp$ such that the coefficients of the Hessian of $f$ are not all divisible by $\fp$. Emulating the argument in the previous paragraph shows that $\calp_\fp$ is preserved by $\GL(R\oplus\fa)_\fp$. Thus, $\calp$ is defined by congruence conditions. For all primes not dividing $\fa$, we claim that $\calv_\fp\subset\calp_\fp$. Let $v\in\calv_\fp$ so that $\disc(v)$ is not divisible by $\fp^2$. The discriminant of $v$ is the discriminant of its Hessian, from which we may conclude that if $v$ is not projective, then $\fp^2$ divides $\disc(v)$. Acceptability follows. 
	\end{proof}

	Regard $V(R_\fp)$ as $R_\fp^4$ and equip $V(R_\fp)$ with its Haar measure $\mu$, normalized so that the closure of $V_\fa$ in $V(R_\fp)$ has volume 1. We let $\mu(\calp)$ denote the $\fp$-adic density of $\calp_\fp$ in $V(R_\fp)$. Next, we compute the volume of $\calp_\fp$ in $V(R_\fp)$. 
	\begin{lemma}\label{lem: proj vol}
		We have $\mu(\calp)=1-N(\fp)^{-2}$. 
	\end{lemma}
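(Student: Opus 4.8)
The plan is to reduce the claim to a point count over the residue field, since projectivity is checked prime-by-prime. First I would localize at $\fp$: because $R_\fp$ is a DVR, the ideal $\fa$ becomes principal, say $\fa_\fp = \pi^m R_\fp$ for a uniformizer $\pi$, and the closure $(V_\fa)_\fp$ consists of forms $ax^3 + 3bx^2y + 3cxy^2 + dy^3$ with $a \in \pi^m R_\fp$, $b \in R_\fp$, $c \in \pi^{-m}R_\fp$, and $d \in \pi^{-2m}R_\fp$. Rescaling via $a = \pi^m a'$, $b = b'$, $c = \pi^{-m}c'$, $d = \pi^{-2m}d'$ identifies $(V_\fa)_\fp$ with $V(R_\fp) \cong R_\fp^4$ in coordinates $(a',b',c',d')$. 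The point of this substitution is that the three Hessian ideals transform compatibly: one computes $(b^2-ac)R_\fp = ((b')^2 - a'c')R_\fp$, $(ad-bc)\fa_\fp = (a'd' - b'c')R_\fp$, and $(c^2-bd)\fa_\fp^2 = ((c')^2 - b'd')R_\fp$, so that $f$ is projective exactly when the three elements $(b')^2 - a'c'$, $a'd'-b'c'$, and $(c')^2 - b'd'$ do not all lie in $\fp R_\fp$.

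Under the normalization making $(V_\fa)_\fp$ have volume $1$, the Haar measure pushes forward to the uniform probability measure on $R_\fp^4$ in the coordinates $(a',b',c',d')$, and membership in $\calp_\fp$ depends only on the reductions of these coordinates modulo $\fp$. Writing $q = N(\fp)$ and $\F_q = R_\fp/\fp$, this gives
\[
\mu(\calp) = 1 - \frac{N}{q^4}, \qquad N \defeq \#\{(a,b,c,d)\in \F_q^4 : b^2 = ac,\ ad = bc,\ c^2 = bd\}.
\]

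The crux is therefore to show $N = q^2$. Here I would observe that the three equations are precisely the vanishing of the three $2\times 2$ minors of the Hankel (catalecticant) matrix $\left(\begin{smallmatrix} a & b & c \\ b & c & d\end{smallmatrix}\right)$, so that the locus in question is the affine cone over the twisted cubic curve in $\BP^3$. Since the twisted cubic is isomorphic to $\BP^1$ and hence has $q+1$ points over $\F_q$, its affine cone has $1 + (q+1)(q-1) = q^2$ points, giving $N = q^2$; equivalently, one may count directly, splitting on whether $a = 0$ (which forces $b = c = 0$ with $d$ free, contributing $q$) or $a \neq 0$ (in which case $c = b^2/a$ and $d = b^3/a^2$ are forced while the third equation becomes automatic, contributing $(q-1)q$). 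Substituting $N = q^2$ yields $\mu(\calp) = 1 - q^{-2} = 1 - N(\fp)^{-2}$.

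The main thing to get right is the bookkeeping of the $\fa$-powers under the rescaling, together with the normalization of the measure: one must check that the rescaled coordinates really carry the uniform product measure of total mass $1$, so that the density is literally the proportion of residue-field quadruples satisfying the minor conditions. I note that the point count $N = q^2$ is insensitive to the residue characteristic, so the formula holds uniformly even when $\fp$ lies over $2$ or $3$.
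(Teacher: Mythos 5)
Your proof is correct and takes essentially the same approach as the paper's: both reduce, via the congruence-condition description of $\calp_\fp$, to counting solutions of $b^2=ac$, $ad=bc$, $c^2=bd$ over the residue field and show this count equals $N(\fp)^2$, giving density $1-N(\fp)^{-2}$. The only differences are cosmetic improvements on your part: your rescaling by $\pi^m$ handles $\fp\mid\fa$ and $\fp\nmid\fa$ uniformly (the paper splits into two cases, writing the rescaling only with a single power of the uniformizer), and your twisted-cubic/Hankel-minor interpretation is a pleasant geometric cross-check of the same point count.
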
 
	\begin{proof}
		Recall that a form $f(x,y)=ax^3+3bx^2y+3cxy^2+dy^3$ in $V_\fa$ is projective if and only if there is no prime $\fp$ dividing all three of \[(b^2-ac)R,\quad (ad-bc)\fa,\quad (c^2-bd)\fa^2.\] Since $\calp$ is defined by congruence conditions modulo $\fp$, we can compute $\mu(\calp)$ by counting the number of projective forms modulo $\fp$. Let $\kappa$ denote the residue field $R/\fp$. We have two cases depending on whether $\fp|\fa$. 
		
		If $\fp$ does not divide $\fa$, then it suffices to count the number of solutions modulo $\fp$ to  \begin{equation}\label{eqn: proj eqn}
			b^2-ac=ad-bc=c^2-bd=0\mod\fp.
		\end{equation} Note that $a$ and $b$ can take any pair of values except $(0,m)$ for $m\neq0$ mod $\fp$. For any such nonzero $(a,b)$, the values of $c$ and $d$ are determined modulo $\fp$. If $(a,b)=(0,0)$, then $c$ is 0 modulo $\fp$ and $d$ can be anything (modulo $\fp$). Thus, there are $|\kappa|^2-(|\kappa|-1)+(|\kappa|-1)=|\kappa|^2$ solutions to \eqref{eqn: proj eqn}, which implies $\mu(\calp)=(|\kappa|^4-|\kappa|^2)/|\kappa|^4=1-N(\fp)^{-2}$. 
		
		If $\fp$ divides $\fa$, then rewrite $a=\varpi a'$, $c=c'/\varpi$, and $d=d'/\varpi^2$, where $\varpi$ is the uniformizer of $R_\fp$ and $a',$ $c',$ and $d'$ are in $R_\fp$. The projectivity condition implies that it suffices to count the number of solutions modulo $\fp$ to \[b^2-a'c'=a'd'-bc'=(c')^2-bd'=0\mod\fp.\] Emulating the arguments from the previous paragraph tells us that $\mu(\calp)=1-N(\fp)^{-2}$. 
	\end{proof}
	
	\section{Orbits over local fields and an ad\`elic reinterpretation}\label{sec: reduction theory}
	
	In \S\ref{subsec: reduction theory}, we analyze the $\GL_2$-action on $V$ over certain local fields. Understanding the orbit structure over local fields is both of independent algebraic interest and essential for applying the parametrization from \S\ref{subsec: param} to count 3-torsion in class groups of orders in quadratic extensions of number fields. We use several of the results from \S\ref{subsec: reduction theory} in \cite{HSAnalysis}. In \S\ref{sec: volumes}, we offer an interpretation of the group action of $\GL(R\oplus\fa)$ on $V_\fa$ from an ad\`elic perspective akin to the one used by Bhargava, Shankar, and Wang to count cubic extensions of global fields \cite{BSW}. Doing so yields an ad\`elic version of the parametrization in Theorem~\ref{thm:mainresult} that is useful to applying geometry-of-numbers methods over global fields. In particular, the ad\`elic viewpoint enables the computation of volumes of certain fundamental domains using Tamagawa numbers in \cite{HSAnalysis}.

	Let $K$ be a number field with ring of integers $R$.\footnote{We remark that analogues of the results and definitions in this section all hold for global fields $K$ with $\on{char}(K)\neq3$. For simplicity, we restrict to the case where $K$ is a number field. Converting the theory developed in this section to its analogue over global fields is simple; for an example of fully abstract, analogous statements, we refer the reader to Section~3 of \cite{BSW}.} Recall the following objects from \S\ref{subsec: param}. For a ring $A$, let $V(A)$ denote the vector space of triply symmetric binary cubic forms with coefficients in $A$, and let $V_\fa$ denote the subgroup of $V(K)$ of triply symmetric binary cubic forms of type $\fa$, i.e., elements of $V(K)$ of the form \[ax^3+3bx^2y+3cxy^2+dy^3\] for $a\in\fa$, $b\in R$, $c\in\fa\inv$, and $d\in\fa^{-2}$. There is a natural action of $\GL_2$ on $V$ given by \[g\cdot f(x,y)=\det(g)\inv f((x,y)g);\] the restriction of the action of $\GL_2(K)$ on $V(K)$ to the subgroup \[\GL(R\oplus\fa)=\left\{\bmat{r&x\\y&s}\mid r,s\in R,x\in\fa,y\in\fa\inv,rs-xy\in R^\times\right\}\] preserves $V_\fa$.

	\subsection{The action of $\GL_2$ on $V$ over local fields}\label{subsec: reduction theory}

	Given a place $\fp$ of $K$ and an element $x\in K$, let $|\cdot|_\fp$ denote the corresponding absolute value on $K$, and let $K_\fp$ denote the completion of $K$ with respect to $|\cdot|_\fp$. We begin by stating a well-known characterization of the action of $\GL_2(F)$ on $V(F)$ for an archimedean local field $F$.
	\begin{lemma}\label{lem: SL2 transitive action on V}
		The group $\GL_2(\C)$ acts transitively on the set of forms in $V(\C)$ with nonzero discriminant. The group $\GL_2(\R)$ acts transitively on the set of forms in $V(\R)$ with discriminant having a given sign.
	\end{lemma}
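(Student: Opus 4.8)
The plan is to exploit the fact that, over a field $F$ of characteristic $\neq 3$ (here $F = \C$ or $\R$), the space $V(F)$ of triply symmetric binary cubic forms is simply the space of binary cubic forms $f(x,y) = ax^3 + 3bx^2y + 3cxy^2 + dy^3$, and that such an $f$ has nonzero discriminant precisely when its three roots in $\BP^1(\overline{F})$ are distinct. Since a binary cubic with three prescribed distinct roots is determined up to an overall nonzero scalar, the $\GL_2(F)$-orbit of $f$ is controlled by just two pieces of data: the configuration of its roots in $\BP^1$, up to the action of $\GL_2(F)$ through $\mathrm{PGL}_2(F)$, together with the residual scalar. Accordingly, I would first use a Möbius transformation to move the roots of an arbitrary $f$ into a standard position, and then use a scalar matrix to normalize the remaining constant. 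The key computation for the latter is that a scalar matrix $\mu I$ acts by $(\mu I)\cdot f = \det(\mu I)\inv f(\mu x, \mu y) = \mu^{-2}\mu^3 f = \mu f$, so scalar matrices scale $f$ by an arbitrary element of $F^\times$.

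Over $\C$ the argument is immediate: $\mathrm{PGL}_2(\C)$ acts sharply $3$-transitively on $\BP^1(\C)$, so for any $f$ with $\disc(f)\neq 0$ there is $g_0 \in \GL_2(\C)$ carrying the three roots of $f$ to $[0:1],\,[1:0],\,[1:1]$. Then $g_0\cdot f$ has the same roots as the fixed form $f_0 = xy(x-y)$, hence $g_0\cdot f = \lambda f_0$ for some $\lambda \in \C^\times$, and a scalar matrix rescales this to $f_0$. Thus every nonzero-discriminant form is $\GL_2(\C)$-equivalent to $f_0$, which gives transitivity.

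Over $\R$ I would first note that the sign of $\disc(f)$ is a $\GL_2(\R)$-invariant: a real change of variables commutes with complex conjugation, and the $\det\inv$-scaling does not alter the roots, so the number of real roots of $f$ — its splitting type — is preserved, and this splitting type is exactly what the sign of the reduced discriminant records (a short computation with $x^3 - xy^2$ and $x^3 + xy^2$ shows that $\disc < 0$ corresponds to three distinct real roots while $\disc > 0$ corresponds to one real root together with a complex-conjugate pair). It then suffices to treat each splitting type. When $f$ has three distinct real roots, $\mathrm{PGL}_2(\R)$ is sharply $3$-transitive on $\BP^1(\R)$, so exactly as over $\C$ one normalizes the roots to $[0:1],\,[1:0],\,[1:1]$ and rescales by a real scalar to reach $f_0 = xy(x-y)$. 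When $f$ has one real root $r$ and a conjugate pair $z,\bar z$, I would send $r$ to $[1:0]$ by a real Möbius transformation; the pair then becomes some $w,\bar w$, and writing the upper member as $w = u + iv$ with $v>0$, the real affine map $t \mapsto (t-u)/v$ lies in $\mathrm{PGL}_2(\R)$, fixes $[1:0]$, and carries $w \mapsto i$ and $\bar w \mapsto -i$. This normalizes the roots to $\{[1:0],\,[i:1],\,[-i:1]\}$, so $f$ becomes a real scalar multiple of the fixed form $f_0' = y(x^2+y^2)$, which a scalar matrix then rescales to $f_0'$. Hence all forms of a given discriminant sign are $\GL_2(\R)$-equivalent.

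The main obstacle is the real conjugate-pair case: unlike $\mathrm{PGL}_2(\C)$, the group $\mathrm{PGL}_2(\R)$ is \emph{not} $3$-transitive on $\BP^1(\C)$, so one cannot simply invoke sharp transitivity and must instead use the special structure of a configuration consisting of one real point and a conjugate pair, which is what the upper-half-plane/affine normalization above accomplishes. The only other point requiring care is bookkeeping of the sign convention for the reduced discriminant — the factor $-1/27$ reverses the correspondence between discriminant sign and splitting type relative to the ordinary polynomial discriminant — which is settled by the explicit computation noted above; compare the analogous treatment of binary quadratic forms in Lemma~\ref{lem: G acts transitively on forms}.
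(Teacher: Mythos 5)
Your proof is correct, but there is nothing in the paper to compare it against: the paper states this lemma as a ``well-known characterization'' and gives no proof whatsoever, so your write-up supplies an argument where the paper has none. Your route---classifying a nondegenerate form by its three distinct roots in $\BP^1(\overline{F})$, moving them to a standard position using sharp $3$-transitivity of $\mathrm{PGL}_2(F)$ (over $\R$, splitting into the totally real case and the case of one real root plus a conjugate pair, where your affine normalization $t\mapsto(t-u)/v$ correctly substitutes for the unavailable $3$-transitivity of $\mathrm{PGL}_2(\R)$ on $\BP^1(\C)$), and then absorbing the leftover constant with a scalar matrix via $(\mu I)\cdot f=\mu f$---is the standard argument, and each step checks out. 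Your sign bookkeeping is also right: with the paper's reduced discriminant ($-1/27$ times the polynomial discriminant), $\disc<0$ corresponds to three distinct real roots and $\disc>0$ to a conjugate pair, and this is consistent with the paper's stabilizer count in Lemma~\ref{lem: stab size}, since your representative $xy(x-y)$ has $\GL_2(\R)$-stabilizer of order $6$ (matching $\disc<0$) while $y(x^2+y^2)$ has stabilizer of order $2$ (matching $\disc>0$). Two minor remarks: the $\GL_2(\R)$-invariance of the discriminant sign follows even more directly from the identity $\disc(g\cdot f)=\det(g)^2\disc(f)$ for the twisted action, without passing through splitting types; and an explicit coordinate normalization in the style of the paper's proof of the quadratic analogue (Lemma~\ref{lem: G acts transitively on forms}) would give an alternative, more computational proof. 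Neither remark affects correctness.
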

	
	Next, we explicitly construct a binary cubic form of each possible discriminant over $\R$ and $\C$. Let $F$ be an archimedean local field with absolute value $|\cdot|_F$. Given $\Delta\in F$, consider the form 
	\begin{equation}\label{eqn: form of fixed disc over R}
		f_{\Delta}(x,y)=|\Delta|_F^{1/4}\left(3x^2y+\frac{\arg(\Delta)}{4}y^3\right),
	\end{equation}
	where $\arg(\Delta)=\sign(\Delta)$ if $F=\R$ and where $\arg(\Delta)$ is the usual argument function if $F=\C$. We see immediately that $\disc(f_\Delta)=\Delta$.
	
	It is possible to compute the $\GL_2(F)$-stabilizers for the orbits of the $\GL_2(F)$-action on $V(F)$. Given a form $f\in V(F)$ with nonzero discriminant, the following well-known result tells us the size of $\Stab_{\GL_2(F)}(f)$ explicitly. We stress here that we are considering the discriminant of a form $f$ to be its \emph{reduced discriminant}, which is $-1/27$ times the usual discriminant used by Delone and Faddeev. Thus, the sizes of the stabilizers over $\R$ are opposite of the usual convention. 
	\begin{lemma}\label{lem: stab size}
		For $f\in V(\R)$, we have $\#\Stab_{\GL_2(\R)}(f)$ is 2 if $\disc(f)$ is positive and 6 if $\disc(f)$ is negative. For $f\in V(\C)$, we have $\#\Stab_{\GL_2(\C)}(f)=6$ if $\disc(f)\neq0$.
	\end{lemma}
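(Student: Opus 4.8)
The plan is to realize $\Stab_{\GL_2(F)}(f)$ as a group of permutations of the three roots of $f$ and then to count which of those permutations are defined over $F$. Since $\disc(f)\neq 0$, the form $f$ factors into three distinct linear forms over $\ol F$, so it determines three distinct points $p_1,p_2,p_3\in\mathbb P^1(\ol F)$. The $\GL_2$-action on $V$ induces the usual $\mathrm{PGL}_2$-action on $\mathbb P^1$, so any $g\in\Stab_{\GL_2(F)}(f)$ must permute $\{p_1,p_2,p_3\}$; recording this permutation defines a homomorphism $\Phi\colon\Stab_{\GL_2(F)}(f)\to S_3$. By Lemma~\ref{lem: SL2 transitive action on V} the stabilizer size is constant on each $\GL_2(F)$-orbit (stabilizers of points in one orbit are conjugate), so it suffices to analyze $\Phi$ on a single representative of each discriminant type.

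First I would check that $\Phi$ is injective. An element of $\ker\Phi$ fixes all three of $p_1,p_2,p_3$, so as a Möbius transformation it is the identity; hence it is a scalar matrix $\lambda I$. The action formula gives $(\lambda I)\cdot f=\det(\lambda I)\inv f(\lambda x,\lambda y)=\lambda^{-2}\lambda^3 f=\lambda f$, so membership in the stabilizer forces $\lambda=1$. In particular the center $F^\times\subset\GL_2(F)$ acts on $V$ through the nontrivial character $\lambda\mapsto\lambda$ and meets the stabilizer only in the identity.

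Next I would identify the image of $\Phi$. Each $\sigma\in S_3$ is realized by a unique Möbius transformation $g_\sigma\in\mathrm{PGL}_2(\ol F)$, and I claim that $\sigma\in\im(\Phi)$ precisely when $g_\sigma$ is defined over $F$. Indeed, if $g_\sigma\in\mathrm{PGL}_2(F)$, lift it to some $g\in\GL_2(F)$; then $g\cdot f$ has the same three roots as $f$, so $g\cdot f=c\,f$ for some $c\in F^\times$, and replacing $g$ by $c^{-1}g$ (legitimate, as $c^{-1}\in F^\times$) produces an honest element of $\Stab_{\GL_2(F)}(f)$ inducing $\sigma$. Thus $\#\Stab_{\GL_2(F)}(f)$ equals the number of permutations of $\{p_1,p_2,p_3\}$ realizable by $F$-rational Möbius transformations, i.e.\ the order of the centralizer in $S_3$ of the image of the Galois action $\Gal(\ol F/F)\to S_3$ on the roots. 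This rescaling step is the crux of the argument: it works only because the center acts through a nontrivial character, so that every $F$-rational root permutation lifts \emph{uniquely} into the stabilizer.

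It then remains to count. Over $\C$ all three roots lie in $\mathbb P^1(\C)$, the Galois action is trivial, and all of $S_3$ is realized, giving $\#\Stab_{\GL_2(\C)}(f)=6$. Over $\R$ the number of real roots is governed by the sign of the \emph{reduced} discriminant $\disc(f)=-\mathrm{Disc}(f)/27$, which reverses the classical convention; reading off the roots of the explicit representative $f_\Delta$ from \eqref{eqn: form of fixed disc over R} shows that $\disc(f)>0$ yields one real root together with a complex-conjugate pair, while $\disc(f)<0$ yields three real roots. In the former case a real Möbius transformation must commute with complex conjugation, hence must fix the unique real root and may only fix or swap the conjugate pair; the centralizer is $\{1,\tau\}$ with $\tau$ the transposition of the pair, of order $2$. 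In the latter case conjugation acts trivially on the roots, the centralizer is all of $S_3$, and the count is $6$. Combining these yields the stated sizes. I expect the main obstacle to be exactly the lifting/rescaling step of the previous paragraph, together with the bookkeeping of the reduced-discriminant sign convention, which must be tracked carefully so that positive reduced discriminant is matched with the one-real-root case (stabilizer of size $2$) and negative with the three-real-root case (stabilizer of size $6$).
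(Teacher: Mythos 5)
Your proposal is correct. Note, however, that the paper offers no proof of this lemma at all: it is invoked as a ``well-known result,'' the standard reference point being the Delone--Faddeev/Bhargava dictionary under which the stabilizer of a nondegenerate form under the twisted action is the automorphism group of the associated \'etale cubic algebra, so that the counts $2$, $6$, $6$ are just $\#\Aut(\R\times\C)$, $\#\Aut(\R^3)$, $\#\Aut(\C^3)$. Your root-permutation argument is a self-contained elementary proof of exactly this fact: injectivity of $\Phi$ via the scalar computation $\lambda I\cdot f=\lambda f$, the rescaling trick to lift $F$-rational M\"obius permutations into the stabilizer, and the identification of the image with the centralizer in $S_3$ of the Galois action on roots are all sound, and your bookkeeping of the reduced-discriminant sign (positive $\disc$ $\leftrightarrow$ one real root $\leftrightarrow$ order $2$; negative $\disc$ $\leftrightarrow$ three real roots $\leftrightarrow$ order $6$) matches the paper's stated convention. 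The one step you should make explicit is the implication ``$\sigma$ centralizes the Galois image $\Rightarrow$ $g_\sigma$ is defined over $\R$'': centralizing gives only that $g_\sigma$ is fixed by $\Gal(\C/\R)$ as an element of $\mathrm{PGL}_2(\C)$, and passing from a Galois-fixed class to an honest real matrix is an (easy) instance of Hilbert 90 --- if $\ol{g}=e^{i\theta}g$ then $e^{i\theta/2}g$ is real --- or can be checked by hand on the explicit representative $f_\Delta$, where the transposition of the conjugate pair is realized by $\mathrm{diag}(1,-1)$. With that sentence added, your argument is complete.
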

	
	Later on, we will also need to understand the forms whose $x^3$-coefficient is 0. Let $P$ denote the subgroup of $\SL_2$ of lower-triangular matrices. Note that $P$ preserves the subspace $V^0$ of $V$ of forms whose $x^3$-coefficient is 0. For each $\Delta\in K_\fp$, let $V(K_\fp)^\Delta$ be the set of forms in $V(K_\fp)$ with discriminant $\Delta$.
	\begin{lemma}\label{lem: transitive action of parabolic subgroup}
		Let $\fp$ be any place of $K$. For $\Delta\in K_\fp^\times$, the group $P(K_\fp)$ of lower-triangular matrices in $\SL_2(K_\fp)$ acts transitively on the forms $V(K_\fp)^0$ whose discriminant is $\Delta$ and whose $x^3$-coefficient is 0.
	\end{lemma}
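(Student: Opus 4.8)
The plan is to produce a single canonical representative in each nonzero discriminant fiber and to show that every form in $V(K_\fp)^0$ with that discriminant is carried to it by an element of $P(K_\fp)$. Write a general form in $V(K_\fp)^0$ as $f = 3bx^2y + 3cxy^2 + dy^3$, the $x^3$-coefficient vanishing by definition of $V^0$. Specializing the discriminant formula at $a = 0$ gives $\disc(f) = b^2(4bd - 3c^2)$, so the hypothesis $\disc(f) = \Delta \in K_\fp^\times$ forces $b \neq 0$. This nonvanishing is what drives the entire reduction.

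First I would record the action of the two one-parameter subgroups generating $P(K_\fp)$. Writing a lower-triangular element of $\SL_2(K_\fp)$ as $\bmat{\lambda & 0\\u&\lambda\inv}$ and using $g\cdot f(x,y) = \det(g)\inv f((x,y)g) = f(\lambda x + uy,\lambda\inv y)$, a direct substitution shows that the unipotent element (with $\lambda = 1$) fixes $b$ and sends $c\mapsto c + 2bu$ and $d\mapsto d + 3cu + 3bu^2$, while the diagonal element (with $u = 0$) sends $b\mapsto b\lambda$, $c\mapsto c\lambda\inv$, $d\mapsto d\lambda^{-3}$. In particular $P(K_\fp)$ preserves $V^0$, as already noted, and preserves $\disc$ since $\det = 1$.

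The reduction then takes two steps, both legitimate because $b\neq 0$ and $K_\fp$ has characteristic zero. Choosing $u = -c/(2b)$ in the unipotent element clears the $xy^2$-coefficient, leaving a form $3bx^2y + d_1y^3$ with $\disc = 4b^3d_1 = \Delta$; then applying the diagonal element with $\lambda = b\inv$ rescales $b\mapsto 1$ and $d_1\mapsto d_1 b^3 = \Delta/4$, yielding the canonical form $f_0 \defeq 3x^2y + \frac{\Delta}{4}y^3$. Since every form with discriminant $\Delta$ is carried to the single form $f_0$ by an element of $P(K_\fp)$, and $P(K_\fp)$ is a group, the action on the fiber is transitive.

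I do not expect a serious obstacle: the argument is a direct two-step normalization, and the only points requiring care are that $\Delta \in K_\fp^\times$ (hence $b\neq 0$) and that $2b$ and $b$ are invertible in $K_\fp$, which hold because $K_\fp$ has characteristic zero, so the argument is uniform across archimedean and non-archimedean places. The single structural fact worth isolating is the invariance of $\disc$ under $P(K_\fp)$, which is immediate from $\det = 1$ and lets us read off $d_1 = \Delta/(4b^3)$ without recomputing.
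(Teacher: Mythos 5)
Your proof is correct, and it follows the same basic strategy as the paper's: use the unipotent part of $P(K_\fp)$ to clear the $xy^2$-coefficient and the diagonal part to rescale, so that the discriminant pins down the remaining coefficient and every form in the fiber reaches one canonical representative. The noteworthy difference is that your argument is uniform across all places, whereas the paper splits into cases according to whether $\fp \mid 2$: for $\fp \nmid 2$ it normalizes $b = 1$ first and then clears $c$ via $(x,y)\mapsto(x-cy/2,y)$, and for $\fp \mid 2$ it only normalizes $c \in \{0,1\}$ before invoking the discriminant. Your observation that this split is unnecessary is sound: the lemma concerns $P(K_\fp)$ acting on $V(K_\fp)^0$, both defined over the \emph{field} $K_\fp$, which has characteristic zero, so $2$ and $b \neq 0$ are invertible regardless of the residue characteristic, and the element $\bmat{1&0\\-c/(2b)&1}$ lies in $P(K_\fp)$ even when $\fp \mid 2$. (The paper's caution would only be relevant for an integral variant of the statement over $R_\fp$.) Your proof also makes explicit a point the paper leaves implicit, namely that $\disc(f) = b^2(4bd - 3c^2) = \Delta \neq 0$ forces $b \neq 0$, which is what licenses both normalization steps; the paper instead phrases this via a unit/divisibility remark that is only germane when $\Delta$ is a unit. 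In short: same reduction, but your version is cleaner and closes the small gaps in the paper's case analysis.
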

	\begin{proof}
		Let $f(x,y)=3bx^2y+3cxy^2+dy^3$ be a form in $V(K_\fp)^0$ with discriminant $\Delta\in K_\fp^\times$. First assume that $\fp\nmid 2$ so that 2 is a unit in $K_\fp$. Recall that $b^2|\disc(f)=\Delta$, so if $\Delta$ is a unit, then so is $b$. Replacing $f$ by some $P(K_\fp)$-translate, we may assume $b=1$. By translating $f$ by the element of $P(K_\fp)$ corresponding to the change of variables $(x,y)\mapsto(x-cy/2,y)$, we may assume $c=0$. Then $\disc(f)=-3c^2+4d=\Delta$ determines $d$, and we see that each of the sets of elements of $V(K_\fp)^\Delta$ with $x^3$-coefficient 0 form a single $P(K_\fp)$-orbit.
		
		When $\fp|2$, we may similarly replace $f$ with a $P$-translate such that $c\in\{0,1\}$. Then $\disc(f)=-3c^2+4d=\Delta$ determines $d$, and we see that each of the sets of elements of $V(K_\fp)^0$ with fixed discriminant form a single $P$-orbit.
	\end{proof}

	\subsection{Ad\`elic reformulation}\label{sec: volumes}
	We conclude by giving an ad\`elic reformulation of our setup and some additional machinery from the theory of reductive groups. For each fractional ideal $\fa$ of $R$, we define a congruence subgroup $\Gamma_\alpha$ of $\GL_2(\A)$ (here, $\A$ denotes the ring of ad\`eles of $K$). The congruence subgroup $\Gamma_\alpha$ acts naturally on a specific subgroup $\call_\fa$ of $V(\A)$, and we prove in Propositions~\ref{prop: Gamma is GL} and \ref{prop: L is V} that $\Gamma_\alpha\simeq\GL(R\oplus\fa)$ and that $\call_\alpha\simeq V_\alpha$, respectively. Immediately, we arrive at Theorem~\ref{thm: adelic param}---an ad\`elic version of Theorem~\ref{thm:mainresult} analogous to the ad\`elic parametrizations used by Bhargava, Shankar, and Wang in \cite{BSW}. Along the way, we prove Theorem~\ref{thm: adelic volumes}, which expresses $\GL_2(\A)/\GL_2(K)$ in terms of $\GL_2(K_\infty)/\GL(R\oplus\fa)$, where $K_\infty=\prod_{\fp|\infty}K_\fp$. The \emph{Tamagawa number} of $\GL_2$ is the volume of a certain subgroup of $\GL_2(\A)/\GL_2(K)$ with respect to a certain measure, and Theorem~\ref{thm: adelic volumes} allows us to write the sum of volumes of $\GL_2(K_\infty)/\GL(R\oplus\fa)$ in terms of the Tamagawa number of $\GL_2$, which is well-known to be 1.

	As usual, let $K$ be a global field with $\on{char}(K)\neq3$. Denote by $R$ the ring of integers of $K$. For each place $\nu$ of $K$, let $K_\nu$ denote the completion of $K$ at $\nu$, and let $R_\nu$ denote the ring of integers of $K_\nu$ when $\nu$ is nonarchimedean and $K_\nu$ itself when $\nu$ is archimedean. Let $\A$ denote the ring of ad\`eles of $K$, and let $\A^\times$ denote its unit group of id\`eles. For each finite set of places $S$, we have the ring of $S$-\emph{ad\`{e}les} \[\A^S=\prod_{\nu\in S}K_\nu\times\prod_{\nu\not\in S}R_\nu\] and its subgroup of $S$-\emph{id\`{e}les}, defined to be \[I_K^S=\prod_{\nu\not\in S}U_\nu\times \prod_{\nu\not\in S}K_\nu^\times,\] where $U_\nu$ is $\C^\times$ for $\nu$ infinite complex and $\R_{+}$ for $\nu$ infinite real. Complementary to the ring of $S$-ad\`{e}les is the ring $K_S$, which is the product of the completions of $K$ at the places in $S$. Both $\A^S$ and $K_S$ embed into $\A$ by setting all other coordinates to 1. When $S$ is the set of infinite places, we refer to the $S$-ad\`{e}les and $S$-id\`{e}les as $\infty$-ad\`{e}les and $\infty$-id\`{e}les, respectively. In this case, we denote $K_S$ by $K_\infty$, and note that $K_\infty$ is isomorphic as a real vector space to $\R^{[K:\Q]}$. We may decompose $\A$ in terms of these subrings as $\A=K_\infty\times (K\otimes\widehat{R}),$ where $\widehat{R}=\prod_{\fp\nmid\infty}R_\fp$.
	
	The subgroup of principal ad\`eles are the elements of $\A$ in the image of the diagonal embedding $K\hookrightarrow\A$, and under this embedding, any fractional ideal $\fa$ of $K$ is situated as a lattice in $K_\infty$ with covolume $N(\fa)\Delta_K^{1/2}$. For a finite set of places $S$, the intersection of the principal id\`{e}les with the $S$-id\`{e}les $K\cap I_K^S$ is called the group of $S$-\emph{units} and is denoted $K^S$. If $S$ is the set of infinite places, then $K^S$ is the group of units $R^\times$ in $R$.

	Let $|\cdot|\colon\A\to\R_{\geq0}$ denote the usual norm map on $\A$, which is a continuous homomorphism given explicitly by piecing together the absolute values $|\cdot|_\fp$ on each $K_\fp$: \begin{equation}\label{eqn: adele norm}|(a_\fp)_\fp|=\prod_\fp|a_\fp|_\fp.\end{equation} Via restriction, the ad\`ele norm (alternatively, the {ad\`elic absolute value}) gives norms on the various subrings and subgroups of $\A$ defined in the above. Notably, for $a$ an id\`ele, the product on the right-hand side of \eqref{eqn: adele norm} is finite, and for $a\in\A\setminus\A^\times$, we have $|a|=0$. Finally, recall the product formula: for all $a\in K^\times$, we have \begin{equation}\label{eqn: product formula}
		|a|=\prod_{\fp}|a|_\fp=1.
	\end{equation} Given any subset $U$ of $\A$, we take $U^1$ to be the elements of $U$ with ad\`ele norm 1, i.e., the intersection of $U$ with the kernel of the norm map.
	
	Let $C_K=\A^\times/K^\times$ denote the id\`ele class group of $K$. Recall that there is a surjective homomorphism from $\A^\times\to\cali(K)$ taking an id\`ele $a=(a_\fp)_\fp\in\A^\times$ to the fractional ideal of $K$ given by \[a\mapsto \prod_{\fp\nmid\infty}\fp^{\nu_\fp(a_\fp)}.\] By unique factorization, this map descends to a surjective homomorphism from $C_K\to\Cl(K)$ whose kernel is $(\A^\infty)^\times$. Thus, we may realize the class group of $K$ as the double coset space \begin{equation}\label{eqn: class group GL1}
		\Cl(K)\simeq(\A^\infty)^\times\backslash\A^\times/K^\times.
	\end{equation}

	Regarding $\A^\times$, $(\A^\infty)^\times$, and $K^\times$ as $\GL_1(\A^\times)$, $\GL_1((\A^\infty)^\times)$, and $\GL_1(K)$, respectively, we define the \emph{class group} of $\GL_k(\A)$ to be the double coset space \[\Cl(\GL_k(\A))=\GL_k(\A^\infty)\backslash\GL_k(\A)/\GL_k(K).\] In analogy with \eqref{eqn: class group GL1}, we have the following theorem.
	\begin{theorem}[cf.\ {\cite[Proposition~8.1]{PR}}]\label{thm: class group GL}
		Let $K$ be a global field. Then $\Cl(\GL_k(\A))\simeq\Cl(K)$.
	\end{theorem}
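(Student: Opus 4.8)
The plan is to prove that the asserted isomorphism is induced by the determinant map $\det\colon\GL_k\to\GL_1$, reducing the general statement to the $\GL_1$ case recorded in \eqref{eqn: class group GL1} together with Steinitz's classification of lattices (Theorem~\ref{thm: Steinitz}). The determinant carries $\GL_k(\A^\infty)$ into $(\A^\infty)^\times$, carries $\GL_k(K)$ into $K^\times$, and carries $\GL_k(\A)$ into $\A^\times$, so it induces a well-defined map $\det_*\colon\Cl(\GL_k(\A))\to\Cl(\GL_1(\A))\simeq\Cl(K)$. Surjectivity is immediate, since any idèle $a$ is the determinant of the diagonal adèle $\on{diag}(a,1,\dots,1)$; the entire content is that $\det_*$ is injective, i.e.\ that the fibre of the determinant over each ideal class consists of a single double coset.

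First I would pass to the finite adèles. Writing $\A=K_\infty\times\A_{\mathrm{fin}}$ with $\A_{\mathrm{fin}}=K\otimes\wh R$, and recalling that $\A^\infty=K_\infty\times\wh R$, the archimedean factor $\GL_k(K_\infty)$ cancels between $\GL_k(\A^\infty)$ and $\GL_k(\A)$, while the diagonal copy of $\GL_k(K)$ descends to its image in $\GL_k(\A_{\mathrm{fin}})$. This identifies $\Cl(\GL_k(\A))$ with the finite-adèlic double coset space $\GL_k(\wh R)\backslash\GL_k(\A_{\mathrm{fin}})/\GL_k(K)$, so it suffices to analyze the latter.

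Next I would set up the standard dictionary between this double coset space and isomorphism classes of rank-$k$ lattices over $R$. Fixing $V=K^k$, a full rank-$k$ lattice $M\subset V$ is determined by its localizations $(M_\fp)_\fp$, which agree with the standard lattice $R_\fp^k$ for all but finitely many $\fp$; conversely every such collection arises from a lattice. The group $\GL_k(\A_{\mathrm{fin}})$ acts transitively on these collections, with the standard lattice stabilized by $\GL_k(\wh R)$, giving a bijection between $\GL_k(\A_{\mathrm{fin}})/\GL_k(\wh R)$ and the set of lattices in $V$. Because any $R$-module isomorphism between two lattices in $V$ extends uniquely to an element of $\GL_k(K)=\Aut_K(V)$, quotienting by the residual $\GL_k(K)$-action yields a bijection between $\GL_k(\wh R)\backslash\GL_k(\A_{\mathrm{fin}})/\GL_k(K)$ (after the usual $g\mapsto g^{-1}$ swap of sides) and isomorphism classes of rank-$k$ lattices over $R$.

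Finally, Theorem~\ref{thm: Steinitz} identifies isomorphism classes of rank-$k$ lattices with $\Cl(R)=\Cl(K)$ via the Steinitz class, completing the chain of bijections. The last step I would carry out is to verify that this composite bijection is genuinely the map $\det_*$: the lattice $M_g$ attached to $g\in\GL_k(\A_{\mathrm{fin}})$ has Steinitz class equal to the class of $\prod_\fp\fp^{\nu_\fp(\det g_\fp)}$, which is exactly the image of $\det g$ under the isomorphism $\Cl(K)\simeq\wh R^\times\backslash\A_{\mathrm{fin}}^\times/K^\times$ of \eqref{eqn: class group GL1}. I expect the main obstacle to be this compatibility check, together with the left/right coset bookkeeping---matching the multiplicativity of the Steinitz class against the local valuations of the determinant. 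I note that the purely group-theoretic route of \cite{PR} instead reduces directly to strong approximation (``class number one'') for the simply connected group $\SL_k$, namely $\SL_k(\A_{\mathrm{fin}})=\SL_k(\wh R)\SL_k(K)$; this disposes of the fibre over the trivial class cleanly, and reducing an arbitrary fibre to it is where the real work lies.
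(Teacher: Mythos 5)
Your argument is correct, but note that the paper contains no proof of this statement at all: the theorem is quoted with a citation to \cite[Proposition~8.1]{PR}, whose proof runs, as you yourself note at the end, through the group-theoretic route of deducing injectivity of $\det_*$ from strong approximation for the simply connected group $\SL_k$. Your route is genuinely different and, given what is already in the paper, essentially self-contained: after cancelling the archimedean factor to identify $\Cl(\GL_k(\A))$ with $\GL_k(\wh{R})\backslash\GL_k(\A_{\mathrm{fin}})/\GL_k(K)$, where $\A_{\mathrm{fin}}=K\otimes\wh{R}$, you use the standard local--global dictionary for lattices to identify this double coset space with isomorphism classes of rank-$k$ lattices in $K^k$, and then Theorem~\ref{thm: Steinitz} to land in $\Cl(K)$. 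The two points you flag as the main obstacles do go through: writing a lattice as $\fa_1e_1\oplus\cdots\oplus\fa_ke_k$ and comparing local valuations shows that the Steinitz class of the lattice with localizations $g_\fp R_\fp^k$ is the class of $\prod_\fp\fp^{\nu_\fp(\det g_\fp)}$, and the $g\mapsto g\inv$ swap of sides at worst composes your chain of bijections with inversion on $\Cl(K)$, which is an automorphism and so does not affect bijectivity of $\det_*$. As for what each approach buys: the argument behind \cite[Proposition~8.1]{PR} is shorter if one grants strong approximation and generalizes to class-number computations for other reductive groups, whereas your lattice-theoretic argument needs only Theorem~\ref{thm: Steinitz} plus elementary local algebra, and it is the more informative one in the context of this paper, since it makes explicit that the double coset of $g$ records exactly the Steinitz class of the associated lattice---which is precisely the mechanism underlying the paper's subsequent choice of representatives $\bm{\alpha}$ with $\det(\bm{\alpha})=\alpha$ and the identifications in Propositions~\ref{prop: Gamma is GL} and~\ref{prop: L is V}.
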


	Choose a set of representatives for the elements of $\Cl(K)$. Henceforth, when we reference an ideal class $[\fa]$ of $\Cl(K)$, we are referring to this specific choice of ideal representative $\fa$. Leveraging the surjectivity of the map $C_K\to\Cl(K)$, we lift our ideal class representatives to a set of id\`ele class representatives. In other words, for each ideal class representative $\fa$, let $\alpha$ be an id\`ele whose corresponding ideal is $\fa$ and whose entries at the archimedean places are all 1. In the sequel, we reserve the notation $\fa$ and $\alpha$ for these ideal and id\`ele representatives, exclusively. 
	
	Using the correspondence laid out in Theorem~\ref{thm: class group GL}, we construct a set of representatives for the classes of $\GL_2(\A)$ by constructing an element of determinant $\alpha$ for each ideal class representative $\fa$ of $\Cl(K)$. Consider the element \[\bm{\alpha}=\bmat{&\alpha\\-1&}\in\GL_2(\A).\] Theorem~\ref{thm: class group GL} tells us that \begin{equation}\label{eqn: class gp decomp}
		\GL_2(\A)=\bigsqcup_{\alpha\in\Cl(K)}\GL_2(\A^\infty)\bm{\alpha}\GL_2(K),
	\end{equation} where the disjoint union runs over the chosen id\`ele representatives of the class group of $K$.

	Using $\bm{\alpha}$, we may write $\GL(R\oplus\fa)$ ad\`elically. For each id\`ele representative $\alpha$ of $\Cl(K)$, we define the ${\alpha}$\emph{-congruence subgroup}  \begin{equation}\label{eqn: Gamma def}\Gamma_\alpha=\GL_2(K)\cap\bm{\alpha}\inv\GL_2(\A^\infty)\bm{\alpha}=\GL_2(K)\cap\GL_2(K_\infty)\bm{\alpha}\inv\GL_2(\widehat{R})\bm{\alpha},
	\end{equation} where the second equality follows from our choice of $\alpha$ and $\bm{\alpha}$. Combining \eqref{eqn: class gp decomp} and \eqref{eqn: Gamma def}, we can express $\GL_2(\A)/\GL_2(K)$ in terms of $\GL_2(K_\infty)/\Gamma_\alpha$.  
	\begin{theorem}\label{thm: adelic volumes}
		For a number field $K$, we have that \[\GL_2(\A)/\GL_2(K)=\bigsqcup_{\alpha\in\Cl(K)}\GL_2(\widehat{R})\times\GL_2(K_\infty)/\Gamma_\alpha.\] 
	\end{theorem}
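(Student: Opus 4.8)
The plan is to descend the class-group decomposition \eqref{eqn: class gp decomp} to the quotient by $\GL_2(K)$ and then identify each resulting piece by an orbit-stabilizer computation. First I would right-multiply \eqref{eqn: class gp decomp} by $\GL_2(K)$ to obtain $\GL_2(\A)/\GL_2(K)=\bigsqcup_{\alpha\in\Cl(K)}\big[\GL_2(\A^\infty)\bm{\alpha}\GL_2(K)\big]/\GL_2(K)$, where disjointness of the pieces is inherited from \eqref{eqn: class gp decomp} (equivalently from Theorem~\ref{thm: class group GL}). Fix a class representative $\alpha$. The group $\GL_2(\A^\infty)$ acts on the left of the $\alpha$-piece, and this action is transitive with base point the coset $\bm{\alpha}\GL_2(K)$; the stabilizer of this base point is $\GL_2(\A^\infty)\cap\bm{\alpha}\GL_2(K)\bm{\alpha}\inv$, which by the definition \eqref{eqn: Gamma def} of $\Gamma_\alpha$ equals $\bm{\alpha}\Gamma_\alpha\bm{\alpha}\inv$. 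Orbit-stabilizer therefore gives a bijection of the $\alpha$-piece with $\GL_2(\A^\infty)/\bm{\alpha}\Gamma_\alpha\bm{\alpha}\inv$ via $h\bm{\alpha}\Gamma_\alpha\bm{\alpha}\inv\mapsto h\bm{\alpha}\GL_2(K)$.

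Next I would unpack this quotient using $\A^\infty=K_\infty\times\widehat{R}$, so that $\GL_2(\A^\infty)=\GL_2(K_\infty)\times\GL_2(\widehat{R})$. Because the archimedean entries of the idèle $\alpha$ are all $1$, the archimedean component of $\bm{\alpha}$ is the fixed matrix $\iota=\bmat{&1\\-1&}\in\GL_2(K_\infty)$, while its finite component is $\bm{\alpha}_f=\bmat{&\alpha_f\\-1&}$. Hence for $\gamma\in\Gamma_\alpha$ the conjugate $\bm{\alpha}\gamma\bm{\alpha}\inv$ has archimedean part $\iota\gamma\iota\inv$ and finite part $\bm{\alpha}_f\gamma\bm{\alpha}_f\inv$, and the very condition $\gamma\in\bm{\alpha}\inv\GL_2(\A^\infty)\bm{\alpha}$ defining $\Gamma_\alpha$ guarantees $\bm{\alpha}_f\gamma\bm{\alpha}_f\inv\in\GL_2(\widehat{R})$. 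Thus $\bm{\alpha}\Gamma_\alpha\bm{\alpha}\inv$ is the image of the injection $\gamma\mapsto(\iota\gamma\iota\inv,\bm{\alpha}_f\gamma\bm{\alpha}_f\inv)$; it is a twisted-diagonal copy of $\Gamma_\alpha$ whose projection to the infinite factor is the $\iota$-conjugate of the embedding $\Gamma_\alpha\hookrightarrow\GL_2(K_\infty)$ and whose projection to the finite factor lands in the compact group $\GL_2(\widehat{R})$.

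To finish I would untwist the diagonal action. After the harmless change of coordinate $g\mapsto g\iota$, right translation by the discrete group $\Gamma_\alpha$ on $\GL_2(K_\infty)$ is free and properly discontinuous, so I may fix a fundamental domain $D\subset\GL_2(K_\infty)$ with $\GL_2(K_\infty)=\bigsqcup_{\gamma\in\Gamma_\alpha}D\gamma$. Using the $\gamma$-component to move the infinite coordinate into $D$, every $\bm{\alpha}\Gamma_\alpha\bm{\alpha}\inv$-orbit in $\GL_2(\widehat{R})\times\GL_2(K_\infty)$ contains a unique representative of the form $(k,d)$ with $k\in\GL_2(\widehat{R})$ and $d\in D$; identifying $D$ with $\GL_2(K_\infty)/\Gamma_\alpha$ yields the bijection $\GL_2(\A^\infty)/\bm{\alpha}\Gamma_\alpha\bm{\alpha}\inv\;\cong\;\GL_2(\widehat{R})\times\big(\GL_2(K_\infty)/\Gamma_\alpha\big)$, and summing over $\alpha$ gives the theorem. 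The step I expect to be the main obstacle is precisely that $\bm{\alpha}\Gamma_\alpha\bm{\alpha}\inv$ acts diagonally rather than through the infinite factor alone, so the identification with a genuine product is the untwisting just described rather than a splitting of groups; what makes it harmless for the downstream volume computations is that the finite part $\bm{\alpha}_f\gamma\bm{\alpha}_f\inv$ acts on $\GL_2(\widehat{R})$ by translations, which preserve Haar measure, so the bijection is measure-compatible and the volume of each piece factors as $\mathrm{vol}(\GL_2(\widehat{R}))\cdot\mathrm{vol}(\GL_2(K_\infty)/\Gamma_\alpha)$.
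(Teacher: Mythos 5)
Your proof is correct, but it follows a genuinely different route from the paper's. The paper begins by reducing the statement to the double-coset identity $\GL_2(\widehat{R})\backslash\GL_2(\A)/\GL_2(K)=\bigsqcup_{\alpha\in\Cl(K)}\GL_2(K_\infty)/\Gamma_\alpha$ and proves that identity directly: it defines $\phi(g\Gamma_\alpha)=\GL_2(\widehat{R})\bm{\alpha}g\GL_2(K)$, deduces surjectivity from \eqref{eqn: class gp decomp}, and proves injectivity by hand---from $\bm{\alpha}g=x\bm{\alpha}hy$ it concludes $y\in\Gamma_\alpha$ and then, comparing archimedean and nonarchimedean coordinates, $g=hy$. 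You never form the quotient by $\GL_2(\widehat{R})$: instead you identify each piece of $\GL_2(\A)/\GL_2(K)$ with the homogeneous space $\GL_2(\A^\infty)/\bm{\alpha}\Gamma_\alpha\bm{\alpha}\inv$ by orbit--stabilizer (your stabilizer computation is precisely the definition \eqref{eqn: Gamma def} conjugated by $\bm{\alpha}$), and then untwist the twisted-diagonal copy of $\Gamma_\alpha$ using a transversal $D$ for the right $\Gamma_\alpha$-cosets; the paper's injectivity computation is thereby absorbed into orbit--stabilizer plus the disjointness of $\bigsqcup_{\gamma}D\gamma$. Your route has a real payoff: it makes the product structure $\GL_2(\widehat{R})\times\bigl(\GL_2(K_\infty)/\Gamma_\alpha\bigr)$ in the statement literal, whereas the paper's opening reduction (``it suffices to show'' the double-coset identity) tacitly uses that $\GL_2(\widehat{R})$ acts freely on each piece of $\GL_2(\A)/\GL_2(K)$---exactly what your untwisting supplies---and your closing observation that the untwisting moves the finite coordinate by translations of $\GL_2(\widehat{R})$ makes the identification Haar-measure-compatible, which is what the downstream Tamagawa-number computation needs. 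Conversely, the paper's route is shorter and avoids fundamental domains entirely; note that for the purely set-theoretic bijection an arbitrary transversal of the right $\Gamma_\alpha$-cosets suffices (freeness of right translation is automatic), and the discreteness of $\Gamma_\alpha\simeq\GL(R\oplus\fa)$ in $\GL_2(K_\infty)$ that you invoke is needed only to choose $D$ measurable for your volume remark.
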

	\begin{proof}
		Note that it suffices to show that \[\GL_2(\widehat{R})\backslash\GL_2(\A)/\GL_2(K)=\bigsqcup_{\alpha\in\Cl(K)}\GL_2(K_\infty)/\Gamma_\alpha.\] Define a map $\phi\colon \bigsqcup_\alpha\GL_2(K_\infty)/\Gamma_\alpha\to \GL_2(\widehat{R})\backslash\GL_2(\A)/\GL_2(K)$ that, for each $\alpha\in\Cl(K)$, sends $g\Gamma_\alpha\mapsto \GL_2(\widehat{R})\bm{\alpha}g\GL_2(K)$. The well-definedness of $\phi$ follows immediately from the fact that $\Gamma_\alpha\subset\GL_2(K)$. 
		
		Now, by Theorem~\ref{thm: class group GL} and our choice of $\bm{\alpha}$, we may write \[\GL_2(\A)=\bigsqcup_{\alpha\in\Cl(K)}\GL_2(\widehat{R})\GL_2(K_\infty)\bm{\alpha}\GL_2(K)=\bigsqcup_{\alpha\in\Cl(K)}\GL_2(\widehat{R})\bm{\alpha}\GL_2(K_\infty)\GL_2(K),\] which tells us that our map is surjective.
		
		For injectivity, suppose that $\GL_2(\widehat{R})\bm{\alpha}g\GL_2(K)=\GL_2(\widehat{R})\bm{\alpha}h\GL_2(K)$ for $g,h\in\GL_2(K_\infty)$. Then there exist $x\in\GL_2(\widehat{R})$ and $y\in\GL_2(K)$ such that $\bm{\alpha}g=x\bm{\alpha}hy.$ Write $ gy\inv h\inv=\bm{\alpha}\inv x\bm{\alpha}$. It follows that \[y=h\inv\bm{\alpha}\inv x\inv\bm{\alpha}g=h\inv g\bm{\alpha}\inv x\inv\bm{\alpha},\]  forcing $y\in\Gamma_\alpha$.  Considering that $g$, $y\inv$, and $h$ are all 1 in each nonarchimedean coordinate and that $x$ and $\bm{\alpha}$ are all 1 in each archimedean coordinate, we must have $g=hy$, as desired.
	\end{proof}
	
	Regarded as a subgroup of $\GL_2(K)$, it turns out that $\Gamma_\alpha$ is a familiar object. 
	\begin{prop}\label{prop: Gamma is GL}
		As a subgroup of $\GL_2(K)$, the $\alpha$-congruence subgroup $\Gamma_\alpha$ is isomorphic to $\GL(R\oplus\fa)$.
	\end{prop}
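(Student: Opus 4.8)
The plan is to prove the stronger statement that $\Gamma_\alpha$ and $\GL(R\oplus\fa)$ are \emph{equal} as subgroups of $\GL_2(K)$, by unwinding the definition $\Gamma_\alpha=\GL_2(K)\cap\bm{\alpha}\inv\GL_2(\A^\infty)\bm{\alpha}$ and evaluating the resulting congruence conditions one place at a time. Since both groups sit inside $\GL_2(K)$ by construction, it suffices to show that a matrix $g=\bmat{r&x\\y&s}\in\GL_2(K)$ satisfies $\bm{\alpha}g\bm{\alpha}\inv\in\GL_2(\A^\infty)$ if and only if $r,s\in R$, $x\in\fa$, $y\in\fa\inv$, and $rs-xy\in R^\times$.

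First I would record the elementary matrix computation. Since $\bm{\alpha}=\bmat{0&\alpha\\-1&0}$ has determinant $\alpha$ and adjugate $\bmat{0&-\alpha\\1&0}$, we get $\bm{\alpha}\inv=\bmat{0&-1\\\alpha\inv&0}$, and direct multiplication yields
\[
\bm{\alpha}g\bm{\alpha}\inv=\bmat{s&-\alpha y\\-\alpha\inv x&r}.
\]
Conjugation preserves the determinant, so $\det(\bm{\alpha}g\bm{\alpha}\inv)=rs-xy$.

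Next I would translate membership in $\GL_2(\A^\infty)\simeq\GL_2(K_\infty)\times\GL_2(\widehat{R})$ place by place. At each archimedean place $\nu$ the chosen idèle has $\alpha_\nu=1$, so the $\nu$-component of the conjugate has entries in $K_\nu$ and imposes no condition beyond $g\in\GL_2(K)$; this is exactly where the normalization of $\alpha$ at the infinite places is used. At each finite prime $\fp$, membership in $\GL_2(R_\fp)$ requires that all four entries of $\bm{\alpha}g\bm{\alpha}\inv$ lie in $R_\fp$ and that its determinant be a unit in $R_\fp$. Using that $\alpha$ was chosen with associated ideal $\fa$, so that $\alpha_\fp R_\fp=\fa_\fp$ and hence $\alpha_\fp\inv R_\fp=\fa_\fp\inv$, the entrywise conditions read $r,s\in R_\fp$, $\alpha_\fp y\in R_\fp$ (i.e.\ $y\in\fa_\fp\inv$), and $\alpha_\fp\inv x\in R_\fp$ (i.e.\ $x\in\fa_\fp$), while the determinant condition reads $rs-xy\in R_\fp^\times$.

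Finally I would intersect these local conditions over all finite primes. Because $R=\bigcap_\fp R_\fp$, $\fa=\bigcap_\fp\fa_\fp$, $\fa\inv=\bigcap_\fp\fa_\fp\inv$, and an element of $K$ lying in $R_\fp^\times$ for every finite $\fp$ is precisely a global unit (so $\bigcap_\fp R_\fp^\times=R^\times$), the conjunction of the local conditions is exactly $r,s\in R$, $x\in\fa$, $y\in\fa\inv$, and $rs-xy\in R^\times$. This is the defining condition for $\GL(R\oplus\fa)$, giving $\Gamma_\alpha=\GL(R\oplus\fa)$ inside $\GL_2(K)$, which is stronger than the asserted isomorphism. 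There is no deep obstacle here; the whole proof is a direct computation, and the only points requiring care are bookkeeping ones: confirming that the archimedean places contribute no constraint, correctly reading off the valuations via $\alpha_\fp R_\fp=\fa_\fp$, and invoking the global-to-local dictionary for $R$, $\fa$, and $R^\times$.
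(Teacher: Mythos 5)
Your proposal is correct and matches the paper's argument: both proofs conjugate by $\bm{\alpha}$, read off the valuation conditions $v_\fp(\alpha\inv x)\geq 0$ and $v_\fp(\alpha y)\geq 0$ at each finite prime (the archimedean places imposing nothing), and conclude that the entries lie in $R$, $\fa$, $\fa\inv$ with unit determinant. Your write-up is in fact slightly more complete than the paper's, since you verify all four entries and the determinant condition in a single biconditional, whereas the paper treats the two containments separately and leaves the determinant check implicit.
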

	\begin{proof}
		We begin by showing that the diagonal image of $\GL(R\oplus\fa)$ is contained in $\Gamma_\alpha$. Let \[\gamma=\bmat{a&b\\c&d}\] be an element of $\GL(R\oplus\fa)$, where $a,d\in R$, $b\in\fa$, and $c\in\fa\inv$. Matrix multiplication tells us that $\bm{\alpha}\gamma\bm{\alpha}\inv$ is contained in $\GL_2(\A^\infty)$, since $v_\fp(\alpha c)\geq0$ and $v_\fp(\alpha\inv b)\geq0$ for all finite $\fp$. 
		
		To see surjectivity, we begin by noting that any element $g$ of $\Gamma_\alpha$ with entries in $K$ can be written as $g=\bm{\alpha\inv}h\bm{\alpha}$ for $h\in \GL_2(\A^\infty)$. Supposing \[g=\bmat{p&q\\r&s}\] for $p,q,r,s\in K$, a similar matrix computation shows that, for each $\fp|\fa$, we have $v_\fp(q)\geq v_\fp(\fa)$ and $v_\fp(r)\leq v_\fp(\fa\inv)$. It follows that $q\in\fa$ and $r\in\fa\inv$, as desired.
	\end{proof} 
	
	Upon realizing that $\Gamma_\alpha$ and $\GL(R\oplus\fa)$ are isomorphic, it is natural to ask whether there exists an ad\`elic analogue of the lattice $V_\fa$. From the $\GL_2$-action on $V$, we inherit a natural action of $\Gamma_\alpha$ on the lattice \begin{equation}\label{eqn: call def}\call_{\alpha}=V(K)\cap\bm{\alpha}\inv V(\A^\infty)=V(K)\cap V(K_\infty)\times\bm{\alpha}\inv \prod_{\fp\nmid\infty}V(R_\fp).
	\end{equation} In analogy with Proposition~\ref{prop: Gamma is GL}, we have the following result.
	\begin{prop}\label{prop: L is V}
		As a subgroup of $V(K)$, we have that $\call_\alpha$ is isomorphic to $V_\fa$.
	\end{prop}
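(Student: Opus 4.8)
The plan is to show that $\call_\alpha$ and $V_\fa$ coincide as subgroups of $V(K)$, in exact parallel with the proof of Proposition~\ref{prop: Gamma is GL}. First I would unwind the definition~\eqref{eqn: call def}: since the archimedean entries of $\alpha$ are all $1$ and $V(\A^\infty)$ imposes no integrality condition at the archimedean places, a form $f\in V(K)$ lies in $\call_\alpha$ if and only if its image $\bm{\alpha}\cdot f$ has coefficients in $R_\fp$ at every finite prime $\fp$ of $R$ (at the archimedean places $\bm{\alpha}\cdot f$ automatically lands in $V(K_\nu)$). Thus the entire content of the proposition is a place-by-place integrality computation, and the only real subtlety is tracking the normalization $\det(g)\inv$ in the $\GL_2$-action together with the $\fp$-adic valuations.

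The key step is to compute this action explicitly. At a finite prime $\fp$ the $\fp$-component of $\bm{\alpha}$ is $\smallbm{0 & \alpha_\fp \\ -1 & 0}$ with $\nu_\fp(\alpha_\fp)=\nu_\fp(\fa)$ and determinant $\alpha_\fp$. Writing $f(x,y)=ax^3+3bx^2y+3cxy^2+dy^3$ and substituting $(x,y)\bm{\alpha}_\fp=(-y,\alpha_\fp x)$ into $g\cdot f(x,y)=\det(g)\inv f((x,y)g)$ gives
\[
\bm{\alpha}_\fp\cdot f(x,y)=d\alpha_\fp^2\,x^3-3c\alpha_\fp\,x^2y+3b\,xy^2-a\alpha_\fp\inv\,y^3.
\]
Reading off the four coefficients, $\bm{\alpha}_\fp\cdot f\in V(R_\fp)$ if and only if $d\alpha_\fp^2$, $c\alpha_\fp$, $b$, and $a\alpha_\fp\inv$ all lie in $R_\fp$; using $\nu_\fp(\alpha_\fp)=\nu_\fp(\fa)$, this is equivalent to $a\in\fa_\fp$, $b\in R_\fp$, $c\in(\fa\inv)_\fp$, and $d\in(\fa^{-2})_\fp$.

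Finally I would intersect over all finite primes. Because a fractional ideal is the intersection of its localizations ($\bigcap_\fp\fa_\fp=\fa$, and likewise for $R$, $\fa\inv$, $\fa^{-2}$), imposing the displayed local conditions simultaneously at every finite $\fp$ on a form $f\in V(K)$ is exactly the requirement that $a\in\fa$, $b\in R$, $c\in\fa\inv$, and $d\in\fa^{-2}$---that is, $f\in V_\fa$. Hence $\call_\alpha=V_\fa$ as subgroups of $V(K)$, which proves the proposition. I do not expect any genuine obstacle here: the argument is a direct computation, and the $\GL(R\oplus\fa)$-equivariance (hence the fact that this is an isomorphism of $\Gamma_\alpha$-modules compatibly with Proposition~\ref{prop: Gamma is GL}) is immediate from the adelic definitions.
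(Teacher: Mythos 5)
Your proof is correct and is essentially the argument the paper intends: the paper omits the proof of Proposition~\ref{prop: L is V}, declaring it ``nearly identical'' to that of Proposition~\ref{prop: Gamma is GL}, and your place-by-place computation of the twist by $\bm{\alpha}$ (tracking $\nu_\fp(\alpha_\fp)=\nu_\fp(\fa)$ and the $\det(g)\inv$ normalization, then intersecting the local conditions $a\in\fa_\fp$, $b\in R_\fp$, $c\in(\fa\inv)_\fp$, $d\in(\fa^{-2})_\fp$ over all finite $\fp$) is exactly that argument, merely packaged as a single biconditional rather than two containments. The explicit formula $\bm{\alpha}_\fp\cdot f(x,y)=d\alpha_\fp^2 x^3-3c\alpha_\fp x^2y+3b\,xy^2-a\alpha_\fp\inv y^3$ checks out, so no gaps.
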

	The proof of Proposition~\ref{prop: L is V} is nearly identical to that of Proposition~\ref{prop: Gamma is GL} and is therefore omitted.  Propositions~\ref{prop: Gamma is GL} and \ref{prop: L is V} then imply the following ad\`elic version of Theorem~\ref{thm:mainresult}.
	\begin{theorem}[cf.\ {\cite[Theorem~11]{BSW}}]\label{thm: adelic param}
		With notation as in the above, for each $\fa$, we have a bijection \[\{\fa\textrm{-balanced quadruples }(S,I,\delta,s)\}/\approx\,\longleftrightarrow\Gamma_\alpha\backslash\,\call_\alpha.\] Furthermore, if $\ell$ is an element of $\call_\alpha$ corresponding to some balanced quadruple $(S,I,\delta,s)$, we have that \[\Disc_{S/R}=\fa^2\disc(\ell).\]
	\end{theorem}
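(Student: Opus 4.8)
The plan is to deduce Theorem~\ref{thm: adelic param} directly from the $\GL$-parametrization of Theorem~\ref{thm: GL parametrization}, transporting it across the identifications furnished by Propositions~\ref{prop: Gamma is GL} and~\ref{prop: L is V}. The key observation is that each of those propositions is established by two-sided containment inside an ambient object, so that they yield genuine equalities of subsets rather than merely abstract isomorphisms: one has $\Gamma_\alpha = \GL(R\oplus\fa)$ as subgroups of $\GL_2(K)$, and likewise $\call_\alpha = V_\fa$ as subgroups of $V(K)$.

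With these equalities in hand, I would first check that the relevant group actions coincide. The action of $\Gamma_\alpha$ on $\call_\alpha$ is, by construction, the restriction of the $\GL_2$-action $g\cdot f(x,y) = \det(g)\inv f((x,y)g)$ on $V$, and the action of $\GL(R\oplus\fa)$ on $V_\fa$ is the restriction of that same $\GL_2(K)$-action. Since the acting groups and the acted-upon lattices coincide as subsets of $\GL_2(K)$ and $V(K)$ respectively, the two actions are literally identical, whence the orbit spaces agree:
\[\Gamma_\alpha\backslash\call_\alpha = \GL(R\oplus\fa)\backslash V_\fa.\]
Composing this equality with the bijection of Theorem~\ref{thm: GL parametrization} produces the asserted bijection between $\fa$-balanced quadruples modulo $\approx$ and $\Gamma_\alpha\backslash\call_\alpha$.

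For the discriminant formula, I would invoke the discriminant-preserving property already recorded in Theorem~\ref{thm: GL parametrization} together with the final assertion of Theorem~\ref{thm:mainresult}: if $f\in V_\fa$ is the form attached to the balanced quadruple $(S,I,\delta,s)$, then $\Disc_{S/R} = \fa^2\disc(f)$. Because $\ell$ and $f$ are the very same element of $V(K)$ under the identification $\call_\alpha = V_\fa$, we have $\disc(\ell)=\disc(f)$, and the formula $\Disc_{S/R} = \fa^2\disc(\ell)$ follows at once. The proof carries no genuine obstacle, since all of the substantive content has been front-loaded into Theorem~\ref{thm: GL parametrization} and Propositions~\ref{prop: Gamma is GL} and~\ref{prop: L is V}; the only point demanding a moment's care is the compatibility of the two actions, which is automatic once one recognizes that the ad\`elic congruence subgroup and its lattice have been intersected back down to $K$, so that the ambient $\GL_2(K)$-action restricts unchanged.
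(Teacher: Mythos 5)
Your proposal is correct and follows essentially the same route as the paper, which likewise deduces Theorem~\ref{thm: adelic param} immediately from Propositions~\ref{prop: Gamma is GL} and~\ref{prop: L is V} together with the bijection of Theorem~\ref{thm: GL parametrization}, the point being exactly the one you emphasize: $\Gamma_\alpha$ and $\call_\alpha$ are cut out inside $\GL_2(K)$ and $V(K)$, so the identifications are equalities of subsets compatible with the restricted $\GL_2(K)$-action, and the orbit spaces coincide. The discriminant statement then follows, as you say, from the discriminant-preserving property already established for $\Phi_\fa$.
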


	\section*{Acknowledgments}
	
	\noindent This paper is based in part on the first-named author’s senior thesis at Harvard College. We are very grateful to Melanie Matchett Wood for providing invaluable advice and encouragement throughout our research. We also thank Barry Mazur, Arul Shankar, and Artane Siad for helpful conversations and suggestions. The first-named author was supported by the NSF under Award No.~DMS-2140043. The second-named author was supported by the NSF under Award No.~2202839.
	
	\bibliographystyle{alpha}
	\bibliography{bib}
	
\end{document}